\newtheorem{theorem}{Theorem}[section]
\newtheorem{lemma}[theorem]{Lemma}
\newtheorem{proposition}[theorem]{Proposition}
\newtheorem{corollary}[theorem]{Corollary}
\newtheorem{remark}[theorem]{Remark}
\newtheorem{assumption}[theorem]{Assumption}
\newcommand{\cal}{\mathcal}
\newcommand{\mc}[1]{{\mathcal #1}}
\newcommand{\mf}[1]{{\mathfrak #1}}
\newcommand{\mb}[1]{{\mathbf #1}}
\newcommand{\bb}[1]{{\mathbb #1}}
\newcommand{\bs}[1]{{\boldsymbol #1}}
\newcommand\C{{\mathbb C}}
\newcommand\E{{\mathbb E}}
\newcommand\N{{\mathbb N}}
\newcommand\R{{\mathbb R}}
\newcommand\T{{\mathbb T}}
\newcommand\Z{{\mathbb Z}}
\newcommand \ga{\gamma}
\newcommand \la{\lambda}
\newcommand \si{\sigma}
\newcommand{\dd  }{\mathrm{d}}
\newcommand{\A}{\mathcal{A}}
\renewcommand{\S}{\mathcal{S}}
\numberwithin{equation}{section}
\begin{document}

\title{Macroscopic evolution of mechanical and thermal energy in a harmonic chain with random flip of velocities}

\author{Tomasz Komorowski}
\address{Tomasz Komorowski\\Institute of Mathematics, Polish Academy
  Of Sciences\\Warsaw, Poland.} 
\email{{\tt komorow@hektor.umcs.lublin.pl}}

\author{Stefano Olla}
  \address{Universit\'e Paris-Dauphine, PSL Research University\\
  CNRS, CEREMADE\\
  75016 Paris, France }
  \email{olla@ceremade.dauphine.fr}

\author{Marielle Simon}

\address{Inria Lille -- Nord Europe \\ 40 avenue du Halley 
\\ 59650 Villeneuve d'Ascq, France\\ \emph{and} Laboratoire Paul Painlev\'e, UMR CNRS 8524 
\\  Cit\'e Scientifique \\ 59655 Villeneuve d'Ascq, France} \email{marielle.simon@inria.fr}

 \keywords{Hydrodynamic limit, heat diffusion, Wigner distribution, thermalization}




\begin{abstract}
We consider an unpinned chain of harmonic oscillators with periodic boundary conditions, 
whose dynamics is perturbed by a random flip of 
the sign of the velocities. 
The dynamics conserves the total volume (or elongation) and the total energy of the system. 
We prove that in a diffusive space-time scaling limit the profiles corresponding 
to the two conserved quantities converge to the solution of a diffusive system of differential equations. 
While the elongation follows a simple autonomous linear diffusive equation, 
the evolution of the energy depends  on the gradient of the square of the elongation.  

\end{abstract}

\maketitle

\tableofcontents
\section{Introduction}\label{sec:intro}

Harmonic chains with energy conserving random perturbations of the dynamics have recently 
received attention in the study of the macroscopic evolution of energy \cite{bbjko,bo1,jko,ko,lmn,simon}. 
They provide models that have a non-trivial macroscopic behavior which can be explicitly computed. 
We consider here the dynamics of an unpinned chain where the velocities of particles 
can randomly change sign.
This random mechanism is equivalent to the deterministic collisions 
with independent \emph{environment}  particles of infinite mass.
 Since the chain is unpinned, the \emph{relevant} conserved quantities of the
 dynamics are the {\em energy} and the {\em volume} (or elongation).
 
Under a diffusive space-time scaling, we prove that the profile of elongation evolves 
independently of the energy and follows 
the linear diffusive equation
\begin{equation}
  \partial_t r(t,u)  =  \frac{1}{2\gamma} \partial^2_{uu} r(t,u). \label{eq:hydro-elongation}
\end{equation}
Here $u$ is the Lagrangian space coordinate of the system 
and $\gamma > 0$ is the intensity of the random mechanism of collisions.
The energy profile can be decomposed into the sum of  
\emph{mechanical}  and  \emph{thermal} energy
\begin{equation*}
  e(t,u) = e_{\rm{mech}}(t,u) + e_{\rm{thm}}(t,u) 
\end{equation*}
where the mechanical energy is given by $e_{\rm{mech}}(t,u) = \frac 12 r(t,u)^2$, 
while the thermal part $e_{\rm{thm}}(t,u)$, that coincides 
with the temperature profile, evolves following the non-linear equation: 
\begin{equation}
  \partial_t e_{\rm{thm}}(t,u) = \frac 1{4\gamma} \partial^2_{uu} e_{\rm{thm}}(t,u) 
+ \frac{1}{2\gamma} \left(\partial_{u} r(t,u)\right)^2 . 
\label{eq:hydro-temperature}
\end{equation} 
This is equivalent to the following conservation law for the total energy:
\begin{equation}
    \partial_t e (t,u) = \frac 1{4\gamma} \partial^2_{uu}\bigg(e(t,u) +  \frac{r(t,u)^2}{2} \bigg). 
\label{eq:hydro-energy}
\end{equation}
The derivation of the macroscopic equations \eqref{eq:hydro-elongation} 
and \eqref{eq:hydro-temperature} from the microscopic dynamical system of particles, 
after a diffusive rescaling of space and time, is the goal of this paper.
Concerning the distribution of the energy in the frequency modes: 
the mechanical energy $e_{\rm{mech}}(t,u)$ is concentrated on the modes 
corresponding to the largest wavelength, while the thermal energy $e_{\rm{thm}}(t,u)$ 
is distributed uniformly over  all frequencies. Note that 
$ \frac{1}{2\gamma} \left(\partial_{u} r(t,u)\right)^2$ is the rate of 
dissipation of the mechanical energy into thermal energy.

The presence of the non-linearity in the evolution of the energy makes the macroscopic limit non-trivial. 
Relative entropy methods (as introduced in \cite{yau}) identify correctly the limit equation
 (see \cite{simon}), but in order to make them rigorous one needs sharp bounds 
on higher moments than cannot be controlled by the 
relative entropy\footnote{For more 
details related to these moments bounds, that are still conjectured but not proved 
(on the contrary to what is claimed in \cite{simon}), 
we refer the reader to an \textit{erratum} 
which is available online at 
\href{http://chercheurs.lille.inria.fr/masimon/erratum-v2.pdf}
{http://chercheurs.lille.inria.fr/masimon/erratum-v2.pdf}.}. 
In this sense the proof in \cite{simon} is not complete.

We follow here a  different approach based on Wigner distributions.
The Wigner distributions permit to control the energy distribution over
various frequency modes and provide a natural separation between  mechanical and thermal energies. 
The initial positions and velocities of particles can be random, and the only condition we ask, 
besides to have definite mean asymptotic profiles
of elongation and energy, is that the thermal energy spectrum has a square integrable density. 
In the macroscopic limit we prove that locally the thermal energy spectrum
 has a constant density equal to the local thermal energy (or temperature), i.e.~that the system is,
 at macroscopic positive times, in local equilibrium, even though it is not at initial time.
Also follows from our result that the mechanical energy is concentrated on the lowest modes. 
This is a stronger local equilibrium result than the one usually obtained with relative entropy techniques.
The Wigner distribution approach had been successfully applied in different contexts
for systems perturbed by noise with more conservation laws in \cite{jko,ko}. 
Here we need a particular \emph{asymmetric} version of the Wigner distribution, in order to deal
 with a finite size discrete microscopic system.

When the system is pinned, only energy is conserved and its macroscopic evolution is linear, 
and much easier to be obtained. In this case the thermalization and the correlation structure 
have been studied in \cite{lukk,lmn}.

When the chain of oscillators is \emph{anharmonic}, still with velocity flip dynamics, the hydrodynamic limit 
is a difficult non-gradient problem,
for the moment still open. 
In that case the macroscopic equations  would be:
\begin{equation}
  \label{eq:6}
  \begin{split}
     \partial_t r(t,u)  &=  \frac{1}{2\gamma} \partial^2_{uu} \big[\tau(r, e)\big],\\
      \partial_t e (t,u) &= \partial_u \big[\mathcal D(r,e) \partial_u  \beta^{-1}(r,e) \big] 
      + \frac 1{4\gamma}\partial^2_{uu}\left(\tau(r,e)^2\right).
  \end{split}
\end{equation}
where $\tau(r,e)$ is the thermodynamic equilibrium tension as function of the volume $r$
and of the energy $e$, and $\beta^{-1}(r,e)$ is the corresponding temperature,  
while $\mathcal D(r,e)$ is the thermal diffusivity defined by the usual Green-Kubo formula, 
as space-time variance of the energy current in the equilibrium infinite dynamics at average elongation 
$r$ and energy $e$ (see Section \ref{sec:conj-anharm-inter} for the definition of these quantities).
The linear response and the existence of $\mathcal D(r,e)$ have been proven in \cite{bo2}.



\section{Microscopic dynamics}
\label{sec:adiab-micr-dynam}

\subsection{Periodic chain of oscillators}

 In the following we denote by $\T_n:=\Z/n\Z=\{0,\ldots,n-1\}$ the discrete circle with $n$ points, and, for any $L>0$, by $\T(L)$ thecontinuous circle of length $L$, and we set $\T := \T(1)$. 

We consider a one-dimensional harmonic chain of $n$ oscillators, all of mass 1, 
with periodic boundary conditions.
The  clearest way to describe this system is as a massive one
dimensional discrete surface $\{ \varphi_x \in \R, x\in \T_n \}$.
The element (or particle) $x$ of the surface is at height $\varphi_x$ and has mass
equal to 1. We call  its velocity  (that coincides with its
momentum) $p_x\in \R$. 
Each particle $x$ is connected to the particles $x-1$ and $x+1$ by
harmonic springs, so that $n-1$ and $0$ are connected in the same way. 
The total energy of the system is given by the Hamiltonian:
  \begin{equation}
{\mathcal{H}_{n}}:=\sum_{x\in\T_n} \mathcal{E}_x, \qquad \mathcal{E}_x:=
\frac{p_x^2}2 + \frac{(\varphi_x-\varphi_{x-1})^2 }2 .\label{eq:ham}
\end{equation}
In addition to the Hamiltonian dynamics associated to the harmonic potentials, 
particles are subject to a random interaction with the environment: 
at independently distributed random Poissonian times, the momentum $p_x$ 
is flipped into $-p_x$. The resulting equations of the motion are
\begin{equation} \left\{ \begin{aligned}
    \dd   \varphi_x(t) &= n^2 p_x(t)\; \dd   t ,\\
    \dd   p_x(t) &= n^2 \big(\varphi_{x+1}(t) + \varphi_{x-1}(t) - 2 \varphi_x(t)\big)\; \dd   t - 2p_x(t^-)
    \; \dd   \mathcal N_x(\gamma n^2t),
  \end{aligned}\right.\label{eq:dynamics0}\end{equation} for any $x \in \T_n$. 
Here $\{\mathcal N_x(t)\; ; \; t \geq 0,\; x\in\T_n\}$ are $n$ independent Poisson processes of
intensity 1, and the constant $\gamma$ is positive. We have 
already accelerated the time scale by $n^2$, according to the diffusive scaling. 
Notice that the energy $\mathcal{H}_n$ is conserved by this dynamics. 
There is another important conservation law that is given by the sum of the elongations of the springs,
 that we define as follows.
We call $r_x = \varphi_x - \varphi_{x-1}$ the elongation of the spring between $x$ and $x-1$, 
and since $x\in \T_n$ we have $r_0 =  \varphi_0 - \varphi_{n-1}$. 
The equation of the dynamics in these coordinates are given by:
 \begin{equation} \left\{ \begin{aligned}
    \dd   r_x(t) &= n^2 \big(p_x(t) - p_{x-1}(t)\big)\; \dd   t\\
    \dd   p_x(t) &= n^2 \big(r_{x+1}(t) - r_x(t)\big)\; \dd   t - 2p_x(t^-)
    \; \dd   \mathcal N_x(\gamma n^2t), \qquad x\in\T_n.
  \end{aligned}\right.
\label{eq:dynamics}
\end{equation}
This implies that the dynamics is completely defined giving the initial conditions 
 $\{r_x(0), p_x(0), x \in \T_n\}$.

The periodicity in the $\varphi_x$ variables would impose that $\sum_{x=0}^{n-1} r_x(0) = 0$.
On the other hand the dynamics defined by \eqref{eq:dynamics} is well defined also 
if $\sum_{x=0}^{n-1} r_x(0) \neq 0$
and has the conservation law $\sum_{x=0}^{n-1} r_x(t) = \sum_{x=0}^{n-1} r_x(0) := R_n$.  Note that $R_n$ can also assume negative values.
In this case we can picture the particles as $n$ points $q_0, \dots, q_{n-1}\in \T(|R_n|)$, 
the circle of length $|R_n|$.  
These points can be defined as $q_x:= \big[\sum_{y=0}^x r_y\big]_{\text{mod}|R_n|}$, for $x=0, \dots, n-1$.
It follows that $q_n = q_0$. 
We will not use  neither the {$q_x$ coordinates}
nor the {$\varphi_x$ coordinates}, 
but we  consider only the evolution defined by  \eqref{eq:dynamics} 
with initial configurations $\sum_{x=0}^{n-1} r_x(0) =R_n \in \R$.

\subsection{Generator and invariant measures}

The generator of the stochastic dynamics ($\mathbf{r}(t):=\{r_x(t)\}_{x \in \T_n}$, $\mathbf{p}(t):=\{p_x(t)\}_{x\in\T_n}$), is given by
\begin{equation*}
\mathcal L_n:= n^2 \A_n + n^2 \gamma\; \S_n,
\end{equation*}
where the Liouville operator  $\A_n$ is formally given by
\[ \A_n
 =\sum_{x\in\T_n}\bigg\{(p_{x}-p_{x-1})
 \frac{\partial}{\partial  r_x}+(r_{x+1}-r_{x})
 \frac{\partial}{\partial p_x}\bigg\},
 \]
while, for $f : \Omega_n\to \R$,
\begin{equation*}
  \S_n f ({\mb r},{\mb p}) = \sum_{x\in\T_n}\big\{ f({\mb r}, {\mb p}^x) -
    f({\mb r},{\mb p})\big\} 
\end{equation*}
where ${\mb p}^x$ is the configuration that is obtained from $\mb p$ by reversing the sign of the velocity at site $x$, namely: $({\mb p}^x)_y = p_y$ if $y\neq x$ and  $({\mb p}^x)_x = -p_x$.

The two conserved quantities 
\(\mc H_n= \sum_{x\in\T_n}\mc E_x\) and \(R_n=\sum_{x\in\T_n} r_x,\) are
 determined by the initial data (eventually random), 
and typically they should be proportional to $n$: $\mc H_n = ne, R_n = nr$, with $e\in\R_+$ the average energy per particle, and $r\in \R$ the average spring elongation. 
Consequently the system has a two parameters family of
stationary measures given by the canonical Gibbs distributions
\begin{equation*}
   \mu^n_{\tau,\beta}(\dd \mb r,\dd \mb p) = \prod_{x\in\T_n} \exp\big(- \beta (\mc E_x - \tau
      r_x) - \mc G_{\tau,\beta}\big)\; \dd   r_x \dd   p_x, \qquad  \beta>0, \tau \in\R,
\end{equation*}
where
\begin{equation*}
  \mc G_{\tau,\beta} = \log \Big[\sqrt{2\pi \beta^{-1}}\int_\R e^{-\frac{\beta}{2}
      (r^2 - 2\tau r)}\;\dd   r  \Big]= \log \Big[2\pi\beta^{-1} \exp\Big(\frac{\tau^2\beta}{2}\Big)\Big].
\end{equation*}
As usual, the parameters $\beta^{-1}>0$ and $\tau\in\R$ are called respectively  \emph{temperature} and \emph{tension}.
Observe that the function 
\begin{equation}\label{eq:thermor}
r(\tau,\beta) = \beta^{-1} \; \partial_\tau \mc G_{\tau,\beta}=\tau
\end{equation} gives the average equilibrium length in function of
the tension $\tau$, and  
\begin{equation}\label{eq:thermoe}
\mc E(\tau,\beta) 
= \tau \; r(\tau,\beta)  - \partial_\beta \mc G_{\tau,\beta}=\beta^{-1} + \frac{\tau^2}{2}\end{equation} 
is the corresponding thermodynamic internal energy function. Note that the energy $\mc E(\tau,\beta)$ is 
composed by a \emph{thermal} energy $\beta^{-1}$ and a \textit{mechanical} energy $ \frac{\tau^2}{2}$.

\subsection{Hydrodynamic limits}
Let $\mu_n(\dd   \mb r,\dd   \mb p)$ be an  initial  Borel
  probability distribution on $\Omega_n$.
 We denote by $\bb P_n$ the law of the process $\{({\bf r}(t),{\bf p}(t))\; ; \; t \geq 0\}$  starting from the measure $\mu_n$ and generating by $\mathcal{L}_n$, and by  $\bb E_n$ its corresponding expectation.
We are given initial continuous profiles of tension $\{\tau_0(u)\; ; \; u \in \T\}$ and of temperature 
$\{\beta^{-1}_0(u)>0\; ;\; u \in \T\}$. The thermodynamic relations \eqref{eq:thermor} and \eqref{eq:thermoe} 
give the corresponding initial profiles of elongation and energy as
\begin{equation} \label{eq:initial-profile}
r_0(u):=\tau_0(u) \quad \text{ and } \quad e_0(u):=\frac{1}{\beta_0(u)}+\frac{\tau^2_0(u)}{2}, \qquad u\in\T.
\end{equation}
The initial distributions $\mu_n$ are assumed to satisfy  the following mean
convergence statements: 
\begin{align}
    \frac 1n\sum_{x\in\T_n} G\Big(\frac x n\Big)\E_n \big[ r_x(0) \big] & \xrightarrow[n\to+\infty]{} \int_\T G(u)\;
    r_0(u)\; \dd   u, \label{eq:init-elong} \\
 \frac 1n\sum_{x\in\T_n} G\Big(\frac x n\Big) \E_n\big[\mc E_x(0)\big] &\xrightarrow[n\to+\infty]{} \int_\T G(u)\;
  e_0(u)\; \dd   u,  \label{eq:init-ener}
  \end{align}
for any test function $G$ that belongs to the set ${\cal C}^\infty(\T)$ of smooth
functions on the torus. 
We expect the same convergence to happen at the macroscopic time
$t$:
\begin{equation}\label{eq:HL}
\begin{aligned} 
    &\frac 1n\sum_{x\in\T_n} G\Big(\frac x n\Big) \E_n\big[r_x(t)\big] \xrightarrow[n\to+\infty]{} \int_\T G(u)\;
     r(t,u) \; \dd  u,\\
 &\frac 1n\sum_{x\in\T_n} G\Big(\frac x n\Big) \E_n\big[\mc E_x(t)\big] \xrightarrow[n\to+\infty]{} \int_\T G(u)\;
     e(t,u)\; \dd   u,
\end{aligned}
\end{equation}
where the macroscopic evolution for the volume and energy profiles follows the system of equations:
\begin{equation}\label{eq:linear}
\left\{\begin{aligned}              
\partial_t r(t,u)&=\frac{1}{2\gamma}\partial^2_{uu}r(t,u),\\
\partial_t e(t,u)&= 
\frac{1}{4\gamma}\partial^2_{uu}\Big(e+\frac{r^2}{2}\Big)(t,u), \qquad   (t,u)\in \bb R_+ \times \T,
\end{aligned}\right.
\end{equation}
with the initial condition
\begin{equation*}
 r(0,u)= r_0(u),\qquad      
e(0,u)=e_0(u).
\end{equation*}
 The solutions $e(t,\cdot)$, $r(t,\cdot)$ of \eqref{eq:linear} are smooth when $t >0$ 
(the system of partial differential equations is parabolic). 
Note that the evolution of $r(t,u)$ is autonomous of $e(t,u)$.
The precise assumptions that are needed for the
convergence \eqref{eq:HL} are stated in Theorems \ref{theo:hydro} and
\ref{theo:hydro1} below.

\section{Main results}
\label{mainresults}

\subsection{Notations}
\subsubsection{Discrete Fourier transform}
Let us denote by $\widehat{f}$ the Fourier transform of a finite sequence $\{f_x\}_{x\in\T_n}$ of numbers in $\C$,
 defined as follows: 
\begin{equation}\label{eq:fourier}
\widehat{f}(k)=\sum_{x\in\T_n} f_x\; e^{-2i\pi x k},\qquad k\in \widehat{\T}_n:=\big\{0,\tfrac1n,...,\tfrac{n-1}n\big\}.
\end{equation} 
Reciprocally, for any $f: \widehat{\T}_n \to \C$, we denote by $\big\{\widecheck{f}_x\big\}_{x\in\T_n}$ its inverse Fourier transform given by
\begin{equation}
\label{eq:inversefouri}
\widecheck{f}_x = \frac1n\sum_{k \in  \widehat{\T}_n} e^{2i\pi xk} f(k),\qquad x \in \T_n.
\end{equation}
The Parseval identity reads 
\begin{equation}\label{eq:parseval}
\|f\|_{\mb L^2}^2:=\frac{1}{n}\sum_{k\in\widehat{\T}_n} \big|\widehat{f}(k)\big|^2=\sum_{x\in\T_n} \big|f_x\big|^2.
\end{equation}
 If $\{f_x\}_{x\in\T_n}$ and $\{g_x\}_{x\in\T_n}$ are two  sequences indexed by the discrete torus, their convolution is given by
\[ (f\ast g)_x:= \sum_{y \in \T_n} f_y\; g_{x-y},\qquad x \in \T_n.
\]

\subsubsection{Continuous Fourier transform}

{Let $\mc C(\T)$ be the the space of continuous, complex valued  functions on $\T$.}
For any   function $G\in\mc C(\T)$, let $\mc FG:\Z \to \C$  denote its  Fourier transform  given as follows:
\begin{equation}
\mc FG(\eta):= \int_\T G(u) \; e^{-2i\pi u \eta} \; \dd u,\qquad \eta\in\Z.
\label{eq:fourierZ}
\end{equation}
Similar identities to \eqref{eq:inversefouri} and \eqref{eq:parseval} can easily be written: for instance, 
we shall repeatedly use the following
\begin{equation} \label{eq:fourierZZ}
G(u)=\sum_{\eta\in\Z}\mc F G(\eta)\;  e^{2i\pi \eta u}, \qquad u\in\T.
\end{equation}
Note that when $G$ is smooth the Fourier coefficients satisfy
\begin{equation} \label{eq:decay}
\sup_{\eta\in\Z}\Big\{(1+\eta^2)^p|\mc FG(\eta)|\Big\}<+\infty, \qquad \text{ for any } p\in\N. 
\end{equation}
If $J:\T\times\T\to \C$ is  defined on a two-dimensional torus,   we still denote by $\mc F J(\eta,v)$, $(\eta,v)\in\Z\times\T$,  its Fourier transform with respect to the first variable. We equip the set $\mc C^\infty(\T\times\T)$ of smooth (with respect to the first variable)  functions with the norm
\begin{equation}
\label{022104}
\|J\|_{0}:= \sum_{\eta\in\Z}\sup_{v\in\T} \big|\mc FJ(\eta,v)\big|.
\end{equation}
Let  ${\cal A}_0$ be the completion of $\mc C^\infty(\T\times\T)$ in
this norm and  $({\cal A}_0',\|\cdot\|_0')$ its dual space. 

\subsubsection{A fundamental example} 

In what follows, we often consider the discrete Fourier transform
associated to a function $G\in {\mathcal C}(\T)$, and to avoid any confusion 
we introduce a new notation: let $\mc F_nG:\widehat{\T}_n\to\C$ 
be the discrete Fourier transform of the finite sequence $\{G(\frac x n)\}_{x\in\T_n}$ 
defined similarly to \eqref{eq:fourier} as
\[
\mc F_nG(k):=\sum_{x\in\T_n} G\Big(\frac{x}{n}\Big) e^{-2i\pi xk}, \qquad k \in \widehat{\T}_n.
\]
In particular, we have the Parseval identity
\begin{equation}\label{eq:pars-d}
\sum_{x\in\T_n} G\Big(\frac{x}{n}\Big) \; f_x^\star = \frac{1}{n}\sum_{k\in\widehat{\T}_n}(\mc F_n G)(k) \;  \widehat f^\star(k).
\end{equation}
Furthermore, note that 
$$
\frac 1n \mc F_nG\left(\frac \eta n \right)\ \mathop{\longrightarrow}_{n\to\infty} \ \mc FG(\eta), \qquad \text{for any } \eta \in \Z.
$$

\subsection{Assumptions on initial data} 
Without losing too much of generality, one
can  put natural assumptions on the initial probability measure
$\mu_n(\dd   \mb r,\dd   \mb p)$. 

The first assumption concerns the mean of the initial configurations, 
and  is sufficient in order to derive the first of the hydrodynamic equations \eqref{eq:linear} :

\begin{assumption} \label{ass} 
  \begin{itemize}
  \item The initial total energy can be random but with uniformly bounded expectation:
   \begin{equation}
    \label{012608}
\sup_{n\geqslant 1} \bb E_n\bigg[\frac{1}{n} \sum_{x\in\T_n} {\cal E}_x(0)  \bigg] < +\infty. 
\end{equation}
\item
We assume that there exist continuous initial profiles $r_0:\T\to \R$ and $e_{0}:\T \to (0,+\infty)$ such that 
 \begin{equation} 
\bb E_n[p_x(0)] = 0,\qquad \bb E_n[r_x(0)] = r_0\Big(\frac{x}{n}\Big)
\qquad \text{for any } x \in
\T_n \label{eq:ass-profil} \end{equation}
and for any $G\in \mc C^\infty(\T)$
 \begin{equation} \label{eq:conv-ener-init}
\frac 1n\sum_{x\in\T_n} G\Big(\frac x n\Big) \E_n\big[\mc E_x(0)\big] \xrightarrow[n\to+\infty]{}  \int_\T G(u)\;
    e_0(u)\; \dd u.
\end{equation}
Identity \eqref{eq:ass-profil}, in particular,  implies the mean convergence of the initial elongation:
 \begin{equation} \label{eq:conv-elong-init}
  \frac 1n\sum_{x\in\T_n} G\Big(\frac x n\Big)\E_n \big[ r_x(0) \big]  \xrightarrow[n\to+\infty]{} \int_\T G(u)\;
    r_0(u)\; \dd   u,  
 \end{equation}
 for any $G\in \mc C^\infty(\T)$.
 \end{itemize}
\end{assumption}

\begin{remark}
  By energy conservation \eqref{012608} implies that
  \begin{equation}
    \label{eq:4}
    \sup_{n\geqslant 1} \bb E_n\bigg[\frac{1}{n} \sum_{x\in\T_n} {\cal E}_x(t)  \bigg] < +\infty, \qquad \text{for all } t\ge 0. 
  \end{equation}
\end{remark}
\begin{remark}
  Conditions in \eqref{eq:ass-profil} are assumed in order to simplify
  the proof, but they can be easily relaxed.
\end{remark}

Next assumption is important to obtain the macroscopic equation for the energy in  \eqref{eq:linear}.
It concerns the energy spectrum of fluctuations around the means at initial time.
Define  the {\em initial thermal energy spectrum} 
$\mathfrak{u}_n(0,k),\; k\in \widehat{\T}_n$, as follows: 
let $\widehat{r}(0,k)$ and $\widehat{p}(0,k)$ denote respectively the Fourier transforms 
of the initial random configurations $\{r_x(0)\}_{x\in\T_n}$ and $\{p_x(0)\}_{x\in\T_n}$, 
and let 
\begin{equation}
\label{x1}
\mathfrak{u}_n(0,k):=\frac{1}{2n}\E_n\Big[ \big|\widehat{p}(0,k) \big|^2
+ \big|\widehat{r}(0,k)-\bb E_n[\widehat{r}(0,k)]\big|^2 \Big],\quad k\in \widehat{\T}_n.
\end{equation}
Due to the Parseval identity \eqref{eq:parseval} we have
\[ \frac{1}{n} \sum_{k\in\widehat{\T}_n} \mf u_n(0,k) =\frac{1}{2n}\sum_{x\in\T_n} \E_n\Big[ p_x^2(0) + \big(r_x(0)-\E_n[r_x(0)]\big)^2\Big].\]

\begin{assumption}\label{ass2}
\emph{\bf (Square integrable initial thermal energy spectrum) } 
\begin{equation}
\sup_{n\geqslant 1} \bigg\{\frac{1}{n} \sum_{k\in\widehat{\T}_n} \mathfrak{u}_n^2(0,k)  \bigg\} < +\infty.\label{eq:uniform-int}
\end{equation}
\end{assumption}

This technical assumption can be seen as a way to ensure that the thermal energy does not concentrate on one (or very few) wavelength(s).

\begin{remark} Assumptions \ref{ass} and \ref{ass2} are satisfied if the measures $\mu_n$  are 
given by local Gibbs measures (non homogeneous product), 
corresponding to the given initial profiles of tension and temperature
$\{\tau_0(u),\beta_0^{-1}(u)\; ;\; u\in\T\}$, defined as follows:
\begin{equation}
  \label{eq:gibbs-local}
  \dd \mu^n_{\tau_0(\cdot),\beta_0(\cdot)}\ = \prod_{x\in\T_n} \exp\Big\{-\beta_0\Big(\frac x n \Big) \Big(\mc E_x - \tau_0\Big(\frac x n \Big)r_x\Big) - \mc G_{\tau_0(\frac x  n),\beta_0(\frac x n)}\Big\}\; \dd r_x \dd p_x.
\end{equation}
with $r_0(u) = \tau_0(u)$ and $e_0(u) = \beta_0^{-1}(u) + \frac{r_0^2(u)}2$, see  \cite[Sections 9.2.3--9.2.5]{ko1}.
Note that our assumptions are much more general, as we do not assume any specific 
condition on the correlation structure of $\mu_n$. In particular microcanonical versions of 
\eqref{eq:gibbs-local}, where total energy and total volumes are conditioned at fixed values $ne$ and $nr$, are included by our 
assumptions.
\end{remark}

\begin{remark}
  We will see  that macroscopically,  our assumptions state that the initial energy has a mechanical part, related to $\tau_0(\cdot)$, that concentrates on the 
longest wavelength (i.e.~around $k=0$), see in Section \ref{sec:wigner} equation \eqref{050704} for the precise meaning. For what concerns   the thermal energy, \eqref{eq:uniform-int} states that it has a square integrable density w.r.t.~$k$.
\end{remark}

\subsection{Formulation of mean convergence}

\label{sec3.3}

 In this section we  state two theorems dealing with the mean convergence of the two conserved quantities, 
namely the elongation and energy. 
The first one (Theorem \ref{theo:hydro}) is proved straightforwardly in Section \ref{ssec:proof} below. 
The second one is more involved, and is the main subject of  the present paper.

\begin{theorem}[Mean convergence of the elongation profile]
\label{theo:hydro} Assume that $\{\mu_n\}_{n\in\N}$ is a sequence of probability measures on $\Omega_n$ such that \eqref{eq:ass-profil} is satisfied, with $r_0 \in \mc C(\T)$. Let $r(t,u)$ be the solution defined on $\R_+\times\T$ of the linear diffusive equation:
\begin{equation}\left\{\begin{aligned}
\partial_t r(t,u)   & = \frac{1}{2\gamma} \partial_{uu}^2 r(t,u), \qquad   (t,u)\in \R_+\times \T, \\ r(0,u)&=r_0(u).
\end{aligned}
\right.\label{eq:elong-evol}\end{equation}
Then, for any $G \in \mc C^\infty(\T)$ and $t\in \bb R_+$,
\begin{align}\label{eq:conv-p}
&\lim_{n\to +\infty} \frac{1}{n}\sum_{x\in\T_n}G\Big(\frac{x}{n}\Big) \E_n\big[p_x(t)\big]  = 0,\\ \label{eq:conv-r}
&\lim_{n\to+ \infty} \frac{1}{n}\sum_{x\in\T_n}G\Big(\frac{x}{n}\Big) \E_n\big[r_x(t)\big]  = \int_{\T} G(u) \;r(t,u)\; \dd u.
\end{align}
\end{theorem}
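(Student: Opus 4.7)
My strategy is to observe that the first moments form a closed, linear, translation-invariant system, diagonalize it by discrete Fourier transform, and pass to the limit mode by mode.

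Write $\bar r_x(t) := \E_n[r_x(t)]$ and $\bar p_x(t) := \E_n[p_x(t)]$. Since the compensator of $2 p_x(t^-)\,\dd \mc N_x(\gamma n^2 t)$ is $2\gamma n^2 \bar p_x(t)\,\dd t$, taking expectations in \eqref{eq:dynamics} yields the deterministic system
\begin{equation*}
\dot{\bar r}_x = n^2 (\bar p_x - \bar p_{x-1}), \qquad \dot{\bar p}_x = n^2(\bar r_{x+1} - \bar r_x) - 2\gamma n^2 \bar p_x,
\end{equation*}
with $\bar r_x(0) = r_0(x/n)$ and $\bar p_x(0) = 0$ by Assumption~\ref{ass}. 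For each $k \in \widehat\T_n$, the Fourier pair $(\widehat{\bar r}(t,k), \widehat{\bar p}(t,k))$ then satisfies a $2\times 2$ constant-coefficient linear ODE whose characteristic roots are
\[ \lambda_\pm(k) = n^2\bigl(-\gamma \pm \sqrt{\gamma^2 - 4\sin^2(\pi k)}\,\bigr), \]
both with non-positive real part. Because $\widehat{\bar p}(0,k)=0$, the solution is explicit:
\[ \widehat{\bar r}(t,k) = \bigl(c_+(k)\,e^{\lambda_+(k)t} + c_-(k)\,e^{\lambda_-(k)t}\bigr)\mc F_n r_0(k), \qquad c_\pm(k) := \mp\frac{\lambda_\mp(k)}{\lambda_+(k)-\lambda_-(k)}, \]
with $c_+ + c_- = 1$ and $|c_\pm(k)|\leq C$ uniformly in $k,n$.

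Setting $k = \eta/n$ with $\eta \in \Z$ fixed, Taylor expansion gives $\lambda_+(\eta/n)\to -(2\pi\eta)^2/(2\gamma)$, $c_+(\eta/n)\to 1$, while $\lambda_-(\eta/n)\sim -2\gamma n^2$ and $c_-(\eta/n) = O(n^{-2})$; for any $t>0$ the $\lambda_-$ summand is exponentially suppressed. Combined with the Riemann-sum convergence $n^{-1}\mc F_n r_0(\eta/n)\to \mc F r_0(\eta)$, one finds the key mode-wise limit
\[ \frac{1}{n}\,\widehat{\bar r}(t,\eta/n)\;\longrightarrow\;\mc F r_0(\eta)\,\exp\!\bigl(-(2\pi\eta)^2 t/(2\gamma)\bigr) \;=\; \mc F r(t,\eta). \]
Identity \eqref{eq:pars-d} rewrites the left-hand side of \eqref{eq:conv-r} as $\frac{1}{n^2}\sum_{k\in\widehat\T_n}\overline{\mc F_n G(k)}\,\widehat{\bar r}(t,k)$. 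Reindexing by $\eta$, the global bound $|\widehat{\bar r}(t,k)|\leq 2Cn\|r_0\|_\infty$ together with the rapid decay $|\mc F_n G(\eta/n)|/n\leq C_p(1+|\eta|)^{-p}$ (standard summation-by-parts for $G\in \mc C^\infty(\T)$, in the spirit of \eqref{eq:decay}) provide a uniformly summable dominant, and dominated convergence in $\eta\in\Z$ then yields
\[ \sum_{\eta\in\Z}\overline{\mc F G(\eta)}\,\mc F r(t,\eta)\;=\;\int_\T G(u)\,r(t,u)\,\dd u, \]
which is \eqref{eq:conv-r}.

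For \eqref{eq:conv-p}, the first equation of the system gives $\widehat{\bar p}(t,k) = [n^2(1-e^{-2i\pi k})]^{-1}\partial_t\widehat{\bar r}(t,k)$. At $k=\eta/n$ the denominator scales as $n$ and so does $\partial_t\widehat{\bar r}(t,\eta/n)$, so $\widehat{\bar p}(t,\eta/n)$ tends to a \emph{bounded} limit $c(\eta):=(i\pi\eta/\gamma)\mc F r_0(\eta)\exp\bigl(-(2\pi\eta)^2 t/(2\gamma)\bigr)$, summable against $\mc F G(\eta)$ for $t>0$. Hence
\[ \frac{1}{n}\sum_{x\in\T_n} G(x/n)\,\bar p_x(t)\;=\;\frac{1}{n^2}\sum_{k\in\widehat\T_n}\overline{\mc F_n G(k)}\,\widehat{\bar p}(t,k)\;\sim\;\frac{1}{n}\sum_{\eta\in\Z}\overline{\mc F G(\eta)}\,c(\eta)\;\longrightarrow\;0, \]
proving \eqref{eq:conv-p}. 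The one genuinely technical point---the main obstacle---is the uniform-in-$n$ integrability needed to swap $\lim_{n\to\infty}$ with the Fourier sum over $\eta$; this is secured by $\mathrm{Re}\,\lambda_\pm(k)\leq 0$, by $|c_\pm(k)|\leq C$, by $|\mc F_n r_0(k)|\leq n\|r_0\|_\infty$, and by the super-polynomial decay of the discrete Fourier coefficients of the smooth test function $G$.
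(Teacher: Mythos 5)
Your proof is correct in substance but follows a genuinely different route from the paper's. The paper never diagonalizes the first-moment system: it substitutes the momentum equation into the elongation equation to obtain a discrete heat equation for $\E_n[r_x(t)]$ with an error term controlled by energy conservation, then extracts a limit by compactness in ${\cal C}([0,T],{\bf L}^2_w(\T))$ (Arzel\`a's theorem) and identifies it through uniqueness of weak solutions of \eqref{eq:elong-evol}; the momentum statement \eqref{eq:conv-p} is handled by a Duhamel formula with the $e^{-2\gamma n^2 t}$ damping. Your explicit Fourier diagonalization buys more: it yields the solution mode by mode with quantitative decay, and it works entirely within the closed deterministic system for the means, so it needs only \eqref{eq:ass-profil} and $\|r_0\|_\infty<\infty$, whereas the paper's Cauchy--Schwarz step \eqref{030203} and the a priori bound \eqref{010203} additionally invoke the energy bound \eqref{012608} of Assumption \ref{ass}. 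The paper's soft argument, on the other hand, avoids all spectral bookkeeping and generalizes more readily.

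One inaccuracy you should repair: the claim that $|c_\pm(k)|\leq C$ uniformly in $k,n$ is false when $\gamma\leq 2$, since $\lambda_+(k)-\lambda_-(k)=2n^2\sqrt{\gamma^2-4\sin^2(\pi k)}$ can be arbitrarily small for $k$ near the wavenumbers where $4\sin^2(\pi k)=\gamma^2$, and there $c_\pm$ blow up. What is true, and what your domination argument actually requires, is the uniform bound on the \emph{combination}
\begin{equation*}
\big|c_+(k)e^{\lambda_+(k)t}+c_-(k)e^{\lambda_-(k)t}\big|\leq 1 ,
\end{equation*}
which has a removable singularity at coalescing roots: in the real-root case write it as $\frac{b e^{-at}-a e^{-bt}}{b-a}$ with $a=-\lambda_+\leq b=-\lambda_-$ and check it decreases from $1$; in the complex-root case use $|\sin(\omega t)/\omega|\leq t$ to get the bound $e^{-\gamma n^2 t}(1+\gamma n^2 t)\leq 1$. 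With this correction the dominated-convergence step for \eqref{eq:conv-r} is sound. Similarly, for \eqref{eq:conv-p} you assert but do not exhibit a summable dominant for $\widehat{\bar p}(t,\cdot)$; the cleanest one is $|\widehat{\bar p}(t,k)|\leq 2|\sin(\pi k)|\,|\widehat{\bar r}(0,k)|\leq 2\pi|\eta|\,\|r_0\|_\infty$ for $k=\eta/n$, which follows from the dissipativity of the mode energy $|\widehat{\bar p}(t,k)|^2+4\sin^2(\pi k)|\widehat{\bar r}(t,k)|^2$ under the mean dynamics. These are local repairs, not gaps in the strategy.
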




\begin{theorem}[Mean convergence of the empirical profile of energy]

\label{theo:hydro1} Let $\{\mu_n\}_{n\in\N}$ be a sequence of probability measures on $\Omega_n$ such that 
Assumptions \ref{ass} and \ref{ass2} are satisfied.
Then, for any smooth function $G:\R_+\times \T \to \R$ compactly supported with respect to the time variable $t\in\R_+$, we have
\begin{equation}\label{eq:convergence}
\lim_{n\to +\infty} \frac{1}{n}\sum_{x\in\T_n}\int_{\R_+} G\Big(t,\frac{x}{n}\Big) \E_n\big[\mc E_x(t)\big] \; \dd t = \int_{\R_+ \times \T} G(t,u) \;e(t,u)\; \dd t\dd u,
\end{equation}
where $e(t,u)=e_{\mathrm{mech}}(t,u) +
e_{\mathrm{thm}}(t,u)$, with 
\begin{itemize}
\item the \emph{mechanical energy}, given by
  $e_{\textrm{\emph{mech}}}(t,u) :=\frac{1}{2} \left(r(t,u)\right)^2
  $ and  the function $r(t,u)$ being the
solution of \eqref{eq:elong-evol},
\item the \emph{thermal energy}
$e_{\mathrm{thm}}(t,u) $, defined as the solution of
\begin{equation}\label{eq:energy-evol}\left\{\begin{aligned}
\partial_t e_{\mathrm{thm}}(t,u) & = \frac{1}{4\gamma}\partial_{uu}^2 e_{\mathrm{thm}}(t,u) + \frac{1}{2\gamma}\big(\partial_u r(t,u)\big)^2,\\
e_{\mathrm{thm}}(0,u) & = \beta^{-1}_0(u) =e_0(u) - e_{\rm mech}(0,u)>0.
\end{aligned}\right.\end{equation}
\end{itemize}
\end{theorem}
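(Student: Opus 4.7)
The strategy is to decompose the energy into a mechanical mean contribution plus a thermal fluctuation contribution, and to track the latter via an asymmetric Wigner distribution.

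\textbf{Step 1 (decomposition).} Writing $\E_n[\mc E_x(t)] = \tfrac12\big(\E_n[r_x(t)]\big)^2 + \tfrac12\big(\E_n[p_x(t)]\big)^2 + \tfrac12\mathrm{Var}_n[r_x(t)] + \tfrac12\mathrm{Var}_n[p_x(t)]$, the means $(\E_n[r_x(t)], \E_n[p_x(t)])$ satisfy the closed linear system obtained by taking expectations in \eqref{eq:dynamics}, and the argument behind Theorem \ref{theo:hydro} gives $\E_n[r_x(t)]\to r(t,x/n)$ strongly while $\E_n[p_x(t)]\to 0$. The first two squares thus contribute $e_{\mathrm{mech}}(t,u)=\tfrac12 r(t,u)^2$ to the limit, and the task reduces to identifying the limit of the variances with $2 e_{\mathrm{thm}}(t,u)$ solving \eqref{eq:energy-evol}.

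\textbf{Step 2 (Wigner distribution and compactness).} With $\widetilde{\hat f}(k) := \hat f(k) - \E_n[\hat f(k)]$, define the asymmetric thermal Wigner distribution $W_n(t)\in\mc A_0'$ by
\[
\langle W_n(t), J\rangle := \frac{1}{2n^2}\sum_{k\in\widehat{\T}_n}\sum_{\eta\in\Z}\mc FJ(\eta, k)\, \E_n\!\Big[\widetilde{\hat p}(t,k)^*\widetilde{\hat p}\big(t, k{+}\tfrac{\eta}{n}\big) + \widetilde{\hat r}(t,k)^*\widetilde{\hat r}\big(t, k{+}\tfrac{\eta}{n}\big)\Big]
\]
for $J\in\mc A_0$. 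The choice $J(u,k)=G(u)$ collapses the right-hand side (up to reflection in $u$) to $\tfrac{1}{2n}\sum_x G(x/n)\E_n[\widetilde p_x^2(t) + \widetilde r_x^2(t)]$, so convergence of $W_n$ in $\mc A_0'$ is exactly what is needed. Assumption \ref{ass2} together with Cauchy--Schwarz yields $\sup_n\|W_n(0)\|_0' < \infty$, and this bound propagates in time because the Hamiltonian part is mode-wise unitary while the flip generator is $L^2$-contractive.

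\textbf{Step 3 (limit equation).} Applying $\mc L_n$ to the relevant quadratic observables and integrating in time gives the evolution of $\langle W_n(t), J\rangle$, and by weak-$*$ compactness one extracts a subsequential limit $W(t)$. Three contributions appear: (i) the $n^2\mc A_n$ part oscillates fast on the diffusive scale, forcing $W$ to be diagonal in the frequency variable $k$; (ii) the $n^2\gamma\S_n$ part combines with (i) via homogenization to produce a macroscopic Laplacian $\tfrac{1}{4\gamma}\partial_v^2$ in the position variable; (iii) a source term arises from the action of $\S_n$ on mean--fluctuation cross-products, where the mean field $\bar r_x(t)=\E_n[r_x(t)]$ couples to the fluctuation $\widetilde r_x(t)$, and produces in the limit precisely $\tfrac{1}{2\gamma}(\partial_v r(t,v))^2$. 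Integrating the resulting PDE over $k$ yields \eqref{eq:energy-evol}; uniqueness of the parabolic Cauchy problem gives full-sequence convergence and, as a by-product, the local equilibrium statement that $W$ has uniform density in $k$.

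\textbf{Main obstacle.} The hardest step is the identification of the source $\tfrac{1}{2\gamma}(\partial_v r)^2$. The flip noise satisfies $\S_n(p_x^2)=0$ but $\S_n(r_x p_x)=-2r_x p_x$, so a nontrivial coupling between the deterministic mean (with non-constant $\partial_u r$) and the fluctuation fields is generated at each microscopic time increment, and its cumulative effect at the diffusive scale must be extracted by a careful averaging exploiting both the fast Hamiltonian oscillations and the noise relaxation. It is precisely at this stage that the $L^2$-integrability hypothesis \eqref{eq:uniform-int} plays a role, as it is needed to control the cross terms and to justify the passage to the limit.
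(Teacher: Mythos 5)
Your overall strategy (mean/fluctuation decomposition plus an asymmetric Wigner distribution for the fluctuating part) is the same as the paper's, but as written the proposal has two genuine gaps.

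First, Step 1 asserts that ``the argument behind Theorem \ref{theo:hydro} gives $\E_n[r_x(t)]\to r(t,x/n)$ strongly,'' so that $\frac1n\sum_x G(x/n)\big(\E_n[r_x(t)]\big)^2 \to \int_\T G(u)\, r(t,u)^2\,\dd u$. This is exactly the point the paper flags as non-trivial: the compactness argument of Section \ref{ssec:proof} yields only weak ${\bf L}^2(\T)$ convergence of $\bar r^{(n)}(t,\cdot)$, and weak convergence does not pass to the square. Upgrading to the convergence of the quadratic functional \eqref{eq:quest} is precisely the content of Proposition \ref{lem:mech}, whose proof requires the full Wigner machinery for the mean field (the autonomous system \eqref{eq:closed-overline}, the inversion of $\mb M_n$, and the concentration of $\overline{W}_n^+$ on frequencies $k=\xi/n$ with $\xi$ of order one, as in Proposition \ref{propo021004}). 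This step cannot be shortcut by citing Theorem \ref{theo:hydro}.

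Second, Step 3 --- which you yourself identify as the main obstacle --- is where the actual proof lives, and the sketch offers no mechanism for carrying it out. The evolution of $W_n^+$ does not close by itself: the flip noise couples it to the three companions $Y_n^\pm$, $W_n^-$ and, crucially, to the $k$-average $[\,\cdot\,]_n$ through the operator $\bb L$ in \eqref{eq:closed-syst}; the paper resolves this by Laplace transforming the $4\times4$ system, inverting the block matrix $\mb M_n$ explicitly, and closing the equation through the scalar quantity $\mc I_n$. Moreover your identification of the source term is off: since $\E_n[\widetilde\psi_x]=0$ there are no mean--fluctuation cross-products in expectation (one has exactly $W_n^+=\overline W_n^+ + \widetilde W_n^+$); the source $\frac{1}{2\gamma}(\partial_u r)^2$ enters the thermal equation as the limit of $\gamma n^2\,\overline{\mc I}_n$, i.e.\ the amplified $k$-average of the purely mechanical Wigner function (see \eqref{eq:nonlin1}), not through $\S_n$ acting on cross terms such as $r_xp_x$. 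Finally, the claim that the square-integrability in $k$ of Assumption \ref{ass2} ``propagates in time'' is neither proved nor needed: the paper uses \eqref{eq:uniform-int} only at $t=0$, through the bound \eqref{071407}, to control the high-frequency remainders in the Laplace-transformed identity.
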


The proof of
Theorem \ref{theo:hydro1} is the aim of Sections \ref{sec:wigner} -- \ref{sec:proof}.

\begin{remark}
Note that \eqref{eq:elong-evol} and \eqref{eq:energy-evol} are equivalent to the system \eqref{eq:linear}. This new way of seeing the macroscopic equations is more convenient, as it naturally arises from the proof.
More precisely, using \eqref{eq:elong-evol} we conclude that the mechanical
energy $e_{\rm mech}(t,u)$ satisfies the equation
\begin{equation*}
\partial_t e_{\rm mech}(t,u)=\frac{1}{2\ga}\Big(\partial_{uu}^2e_{\rm
    mech}(t,u)-\big(\partial_u r(t,u)\big)^2\Big)
\end{equation*}
and  the macroscopic energy density function satisfies
\begin{equation*}\left\{\begin{aligned}
\partial_t e(t,u) & = \frac{1}{4\gamma}\partial_{uu}^2\big( e(t,u) + e_{\rm mech}(t,u)\big),\\
e(0,u) & = e_0(u).
\end{aligned}\right.\end{equation*}
\end{remark}

\begin{remark}
  We actually prove a stronger result that includes a local equilibrium statement, see
Theorem \ref{theo:limitW} below.
\end{remark}

\subsection{Proof of the hydrodynamic limit  for the elongation} \label{ssec:proof}
Here we give a simple proof of Theorem \ref{theo:hydro}. From the evolution equations \eqref{eq:dynamics} we have the
  following identities:
$$
      \frac 1n \sum_{x\in\T_n}G\Big(\frac{x}{n}\Big) \E_n\big[r_x(t) -
      r_x(0)\big] = n^2 \int_0^t\frac 1n
      \sum_{x\in\T_n}G\Big(\frac{x}{n}\Big) \E_n\big[p_x(s) -
      p_{x-1}(s)\big]  \dd s
$$
   and
 \begin{align*}
    2\ga n^2 \int_0^t\frac 1n
      \sum_{x\in\T_n}G\Big(\frac{x}{n}\Big) \E_n\big[p_x(s)\big]  \dd s  = \; & n^2 \int_0^t\frac 1n
      \sum_{x\in\T_n}G\Big(\frac{x}{n}\Big) \E_n\big[r_{x+1}(s) -
      r_{x}(s)\big]  \dd s\\
&
+\frac 1n \sum_{x\in\T_n}G\Big(\frac{x}{n}\Big) \E_n\big[p_x(0) -
      p_x(t)\big].
 \end{align*}
Substituting from the second equation into the first one we conclude that
  \begin{align}
  \label{020203}
      \frac 1n \sum_{x\in\T_n}G\Big(\frac{x}{n}\Big) \E_n\big[r_x(t) - r_x(0)\big] = 
      &\int_0^t \frac 1{2\gamma n} \sum_{x\in\T_n} \Delta_n G\Big(\frac{x}{n}\Big) \E_n\big[r_x(s)\big] \dd s\\
      &- \frac 1{2\gamma n^2} \sum_{x\in\T_n} \nabla_n G\Big(\frac{x}{n}\Big) \E_n\big[p_x(0) - p_x(t)\big],\nonumber
    \end{align}
where 
\begin{align*}\nabla_n G\Big(\frac{x}{n}\Big)& = n\Big(G\Big(\frac{x+1}{n}\Big) - G\Big(\frac{x}{n}\Big)\Big)\\
 \Delta_n G\Big(\frac{x}{n}\Big) &= n\Big(\nabla_n G\Big(\frac{x}{n}\Big)  - \nabla_n
   G\Big(\frac{x-1}{n}\Big)\Big).
\end{align*}
By energy conservation and Assumption \ref{ass} it is easy to see that 
\begin{align}
\label{030203}
    \bigg|\frac 1{n^2} \sum_{x\in\T_n} \nabla_n G\Big(\frac{x}{n}\Big) \E_n\big[p_x(t)\big]\bigg|^2 & \le
    \frac 1{n^2} \bigg(\frac 1n \sum_{x\in\T_n} \Big|\nabla_n G\Big(\frac{x}{n}\Big)\Big|^2\bigg) \bigg(\frac 1n \sum_{x\in\T_n} \E_n\big[p_x^2(t)\big]\bigg)\nonumber\\
    &\le \frac {C(G)}n \bigg(  \frac 1n \sum_{x\in\T_n} \E_n\big[\mathcal E_x^2(0)\big]  \bigg)\mathop{\longrightarrow}_{n\to\infty} \ 0.
  \end{align}
Let us define
$$
\bar r^{(n)}(t,u):=\E_n\big[r_x(t)\big], \quad \text{ for any } u\in
\left[\tfrac{x}{n},\tfrac{x+1}{n}\right), \quad n\ge1.
$$
Thanks to the energy conservation we know that
there exists $R>0$ such that
\begin{equation}
\label{010203}
\sup_{n\ge1}\sup_{t\in[0,T]}\|\bar r^{(n)}(t,\cdot)\|_{{\bf L}^2(\T)}=:R<+\infty.
\end{equation}
The above means that   for each $t\in[0,T]$ the sequence $\left\{\bar r^{(n)}(t,\cdot)\right\}_{n\ge1}$ is contained in $\bar B_R$ -- the closed ball of radius $R>0$ in ${\bf L}^2(\T)$, centered at $0$.
 The ball  is compact in ${\bf L}^2_w(\T)$ --
the space of square integrable functions on the torus $\T$ equipped
with the weak ${\bf L}^2$ topology.  The  topology restricted to $\bar B_R$ is metrizable, with the respective metric given e.g.~by
$$
d(f,g):=\sum_{n=1}^{+\infty}\frac{1}{2^n}\frac{|\langle f-g,\phi_n\rangle_{{\bf L}^2(\T)}|}{1+|\langle f-g,\phi_n\rangle_{{\bf L}^2(\T)}|},\quad f,g\in \bar B_R,
$$
where $\{\phi_n\}$ is a countable and dense subset of  ${\bf L}^2(\T)$ that can be chosen of elements of ${\cal C}^\infty(\T)$.
From \eqref{020203} and \eqref{030203} we conclude in particular that  for each $T>0$ the sequence  $\left\{\bar r^{(n)}(\cdot)\right\}$ is equicontinuous 
 in ${\cal C}\left([0,T],\bar B_R\right)$. Thus, according to the Arzela Theorem, see e.g.~\cite[p. 234]{kelley}, it is
sequentially pre-compact in the space ${\cal C}\left([0,T],{\bf L}^2_w(\T)\right)$ for any $T>0$. Consequently, any limiting point of the
sequence satisfies  the partial differential equation
\eqref{eq:elong-evol}  in a weak sense 
in the class of ${\bf L}^2(\T)$ functions.  
Uniqueness of the weak solution  of the heat equation gives the
convergence claimed in \eqref{eq:conv-r} and the identification of the
limit as the strong solution of \eqref{eq:elong-evol}. 

Concerning \eqref{eq:conv-p}, from \eqref{eq:dynamics} we have
\begin{align*}
    \frac{1}{n}\sum_{x\in\T_n}G\Big(\frac{x}{n}\Big) \E_n\big[p_x(t)\big] = &
    \frac{e^{-2\gamma n^2 t} }{n}\sum_{x\in\T_n}G\Big(\frac{x}{n}\Big) \E_n\big[p_x(0)\big] \\
   & + \int_0^t e^{-2\gamma n^2 (t-s)} \sum_{x\in\T_n}\nabla^*_n G\Big(\frac{x}{n}\Big) \E_n\big[r_x(s)\big] \dd s, 
  \end{align*}
  where $\nabla_n^* G(\frac{x}{n}) = n\left(G(\frac{x}{n}) - G(\frac{x-1}{n})\right)$. 
Using again energy conservation and the Cauchy-Schwarz inequality, it is easy to see that the right hand side of the above vanishes as $n\to\infty$.

\begin{remark} Note that we have not used the fact that the initial average of the velocities vanishes. Additionally,  by standard methods it is possible 
 to obtain the convergence of elongation and  momentum empirical distributions 
in probability (see \eqref{eq:conv-p} and \eqref{eq:conv-r}), but we shall not pursuit this here.\end{remark}

\section{Conjecture for anharmonic interaction
 and thermodynamic considerations}
\label{sec:conj-anharm-inter}

Our results concern only harmonic interactions, but we can state the expected macroscopic behavior 
for the anharmonic case. Consider a non-quadratic potential $V(r)$, of class $\mc C^1$ and growing fast 
enough to $+\infty$ as $|r|\to\infty$. The dynamics is now defined by
\begin{equation} \left\{ \begin{aligned}
    \dd   r_x(t) &= n^2 \big(p_x(t) - p_{x-1}(t)\big)\; \dd   t\\
    \dd   p_x(t) &= n^2 \big(V'(r_{x+1}(t)) - V'(r_x(t))\big)\; \dd   t - 2p_x(t^-)
    \; \dd   \mathcal N_x(\gamma n^2t), \; x\in\T_n.
  \end{aligned}\right.
\label{eq:nl-dynamics}
\end{equation} 
The stationary measures are given by the canonical Gibbs distributions
\begin{equation}
  \label{eq:gibbs}
  d\mu^n_{\tau,\beta} = \prod_{x\in\T_n} e^{- \beta (\mc E_x - \tau
      r_x) - \mc G_{\tau,\beta}}\; \dd r_x\; \dd p_x, \qquad \tau \in \mathbb{R}, \beta>0, 
\end{equation}
where we denote 
$$
\mc E_x = \frac{p_x^2}{2} + V(r_x),
$$
the energy that we attribute to the particle $x$, and 
\begin{equation}
  \label{eq:pfunct}
  \mc G_{\tau,\beta}= \log \left[\sqrt{2\pi \beta^{-1}}\int e^{-\beta(V(r) - \tau r)}\; \dd r  \right].
\end{equation}
Thermodynamic entropy $S( r, e)$ is defined as
\begin{equation}
  \label{eq:S}
  S(r, e) = \inf_{\tau\in \R,\beta>0} \big\{ \beta e - \beta\tau  r  +  \mc G(\tau,\beta)\big\}.
\end{equation}
Then we obtain the inverse temperature and tension as functions of the volume $r$ and internal energy $u$:
\begin{equation}
\bs\beta^{-1}(r,e) = \partial_{e} S(r,e) , \qquad \bs\tau(r,e) = - \bs\beta^{-1}(r,e)\partial_{r} S(r,e) 
\label{eq:5}
\end{equation}
The macroscopic profiles of elongation $r(t,u)$ and energy $e(t,u)$ will satisfy the equations
\begin{equation}
  \label{eq:6-b}
  \begin{split}
     \partial_t r  &=  \frac{1}{2\gamma} \partial^2_{uu} \big[\bs\tau(r, e)\big],\\
      \partial_t e &= \partial_u \big[\mathcal D(r,e) \partial_u  \bs\beta^{-1} \big] 
      + \frac 1{4\gamma}\partial^2_{uu}\left(\bs\tau(r,e)^2\right).
  \end{split}
\end{equation}
Here the diffusivity $\mathcal D(r,e)>0$ is defined by a Green-Kubo formula for the infinite dynamics in equilibrium at the given values $(r,e)$. The precise definition and the proof of the convergence of Green-Kubo formula 
for this dynamics can be found in \cite{bo2}. 

A straightforward calculation gives the expected increase of thermodynamic entropy:
\begin{equation}
  \label{eq:2ndP}
  \frac d{dt} \int_\T S(r(t,u), e(t,u)) \; \dd u = \int_\T \bs \beta\left(\frac{(\partial_u \bs\tau)^2}{2\gamma} 
  + \mathcal D(r,e)\big( \partial_u  \bs\beta^{-1}\big)^2\right) \; \dd u \ge 0.
\end{equation}

\section{Time-dependent Wigner distributions}
\label{sec:wigner}

Before exposing the strategy of the proof of Theorem \ref{theo:hydro1}, let us start by introducing our main tool: the Wigner distributions associated to the dynamics.

\subsection{Wave function for the system of oscillators}
\label{sec:wave}

Let $\widehat{p}(t,k)$ and $\widehat{r}(t,k)$, for $k\in \widehat{\T}_n$,  denote
the Fourier transforms of, respectively, the momentum and elongation components of the microscopic configurations
$\{p_x(t)\}_{x\in\T_n}$ and $\{r_x(t)\}_{x\in\T_n}$, as in \eqref{eq:fourier}.
Since they are real valued we have, for any $k \in \widehat{\T}_n$,
\begin{equation}
\label{x2}
\widehat p^\star(t,k)=\sum_{x\in\T_n}e^{2\pi i kx}p_x(t)=\widehat p(t,-k)\
\mbox{ and likewise }\ \widehat r^\star(t,k)=\widehat r(t,-k).
\end{equation}
 The {\em wave function} associated to the dynamics is defined as 
 \begin{equation*}
{\psi}_{x}(t):=   r_x(t) + i p_x(t),\qquad x\in \T_n.
\end{equation*}
Its Fourier transform equals
\begin{equation*}
\widehat\psi(t,k) :=  \widehat{r}(t,k)+
i\widehat{p}(t,k),\qquad k\in \widehat{\T}_n.\end{equation*} 
Taking into account \eqref{x2} we obtain
\begin{align*}
\widehat p(t,k)&=\frac{1}{2i}\big(\widehat\psi(t,k)- \widehat\psi^\star(t,-k)\big),\\
\widehat r(t,k)&=\frac{1}{2}\big(\widehat\psi(t,k)+ \widehat\psi^\star(t,-k)\big).
\end{align*}
With these definitions we have $|{\psi}_{x}|^2 = 2
\mathcal{E}_x$ and the initial thermal energy
spectrum, defined in \eqref{x1}, satisfies
\begin{equation}
  \label{x111}
 \mf u_n(0,k)=\widetilde{\mf u}_n(0,k) +{\rm Im}\big[{\rm Cov}_n(\widehat p(0,k),\widehat r(0,k))\big],\qquad k\in \widehat{\T}_n.
\end{equation} 
Here ${\rm Cov}_n(X,Y):=\E_n[XY^\star]-\E_n[X]\,\E_n[Y^\star]$ is the covariance of complex random variables  $X$ and $Y$,
and
$$
\widetilde{\mf u}_n(0,k):=\frac1{2n}\E_n\Big[\big|\widehat\psi(0,k)-\E_n[\widehat\psi(0,k)]\big|^2\Big]
$$
Using Cauchy-Schwarz inequality we conclude easily that
$$
 \frac12\widetilde{\mf u}_n(0,k) \le \mf u_n(0,k)\le 2\widetilde{\mf u}_n(0,k),\quad k\in \widehat{\T}_n.
$$
Therefore, \eqref{eq:uniform-int} is equivalent with
\begin{equation}
\label{eq:uniform-int1}
\sup_{n\geqslant 1} \bigg\{\frac{1}{n} \sum_{k\in\widehat{\T}_n} \widetilde{\mathfrak{u}}_n^2(0,k)  \bigg\} < +\infty.
\end{equation} 
After a straightforward calculation, the equation that governs the time evolution of the wave function can be deduced 
from \eqref{eq:dynamics} as follows:
\begin{align}
\dd   \widehat\psi(t,k) = 
&
-n^2 \Big(2i \sin^2(\pi k) \widehat\psi(t,k)+\sin(2\pi k)\widehat\psi^\star(t,-k)\Big) \; \dd   t
\notag\\
&-\frac{1}{n}\sum_{k'\in\widehat{\T}_n} \Big\{ \widehat\psi(t^-,k-k')-\widehat\psi^\star(t^-,k'-k)\Big\} \;  \dd
\widehat{\mathcal{N}}(  t,  k'),\label{eq:dynamics-psi}
\end{align}
with   initial condition  $\widehat\psi(0,k)=\widehat r(0,k).$
The semi-martingales $\big\{\widehat{\mathcal{N}}(  t,  k) \; ; \;t\geq 0 \big\}$ are defined as  
\[
\widehat{\mathcal{N}}(  t,  k):= \sum_{x\in\T_n}  {\mc N}_x(\gamma n^2 t)
e^{-2i\pi x k}, \qquad k \in \widehat{\T}_n. \]
Observe that we have 
$\widehat{\mathcal{N}}^\star(  t,  k)=\widehat{\mathcal{N}}(  t, -k)$. In addition,
its mean and covariance  equal respectively
\begin{eqnarray*}
&&
\big\langle \dd\widehat{\mathcal{N}}(  t,  k)\big\rangle=\gamma n^3 \; \delta_{k,0}\; \dd t,\\
&&
\big\langle \dd\widehat{\mathcal{N}}(  t,  k),\dd\widehat{\mathcal{N}}(t,k')\big\rangle=\gamma n^3 t\; \delta_{k,-k'}\; \dd t,
\end{eqnarray*}
where $\delta_{x,y}$ is the usual Kronecker delta function, which equals 1 if $x=y$ and 0 otherwise.
The conservation of energy, and Parseval's identity, imply together that:
\begin{equation}\label{eq:conservation}
\big\|\widehat\psi(t)\big\|_{\mb L^2}
=\big\|\widehat\psi(0)\big\|_{\mb L^2}\quad  \mbox{for all  } t \ge 0.
\end{equation}

%

\subsection{Wigner distributions and Fourier transforms} 
 The \emph{Wigner distribution} $\mb W^+_n(t)$ corresponding to the wave function $\psi(t)$ 
is a  distribution defined by its action on smooth functions $G \in \mc C^\infty(\T\times \T)$ as 
\begin{equation}
\label{eq:wigner-distrib}
\big\langle \mb W^+_n(t),\; G \big\rangle := \frac1n
\sum_{k\in\widehat{\T}_n}\sum_{\eta\in\Z} W^+_n(t,\eta,k) (\mc
FG)^\star(\eta,k),
\end{equation}
where the \emph{Wigner function} $W_n^+(t)$ is given  for any $(k,\eta)\in\widehat{\T}_n\times \Z$ by
\begin{equation}
\label{010803}
 W_n^+(t,\eta,k) := \frac{1}{2n} \bb E_n\Big[
 \widehat\psi\Big(t,k+\frac{\eta}{n}\Big)
 \widehat\psi^\star(t,k)\Big].
\end{equation}
Here, we use the mapping $\bb Z \ni\eta \mapsto \frac{\eta}{n} \in \widehat \T_n$, and $\mathcal F G(\eta, v)$ 
denotes the Fourier transform with respect to the first variable.  

\begin{remark}Note that this definition of the Wigner function is not the standard symmetric one. Indeed, since the setting here is discrete, it turns out that \eqref{010803} is the convenient way to identify the Fourier modes, otherwise we would have worked with ill-defined quantities, for instance $\frac{\eta}{2n}$, which are not always integers.
\end{remark}

The main interest of the Wigner distribution is that mean convergence of the empirical energy profile \eqref{eq:convergence} 
can be restated in terms of convergence of Wigner functions 
(more precisely, their Laplace transforms, see Theorem \ref{theo:limitW} below), 
thanks to the following identity: if $G(u,v)\equiv G(u)$ does not depend on the second  variable $v\in\T$, then 
\begin{equation}
\label{022303}
\big\langle \mb W^+_n(t),G \big\rangle =\frac1n \sum_{x\in\T_n} \bb E_n\big[\mc E_x(t)\big]G\Big(\frac{x}{n}\Big) . 
\end{equation}
 Indeed,
 from \eqref{eq:wigner-distrib} we obtain then
\begin{equation}
\label{022303a}
\big\langle \mb W^+_n(t),G \big\rangle = \frac{1}{2n^2}
\sum_{k\in\widehat{\T}_n}\sum_{x,x'\in \T_n}\sum_{\eta\in\Z}\E_n\big[\psi_{x'}^\star (t)\psi_x(t)\big]e^{2\pi i (x'-x)k}e^{2\pi ix \frac \eta n}(\mathcal F G)^\star(\eta).
\end{equation}
Performing the summation over $k$ we conclude that the right hand side equals
\begin{equation}
\label{022303b}
 \frac{1}{2n}
\sum_{x,x'\in \T_n}\sum_{\eta\in\Z}\E_n\big[\psi_{x'}^\star (t)\psi_x(t)\big]1_{\Z}\Big(\frac{x'-x}{n}\Big)e^{2\pi i x\frac\eta n}(\mathcal F G)^\star(\eta),
\end{equation}
where $1_{\Z}$ is the indicator function of the set of integers. Since $0\le x,x'\le n-1$ and $|\psi_x(t)|^2=2{\cal E}_x(t)$ we conclude that the above expression equals
\begin{equation}
\label{022303c}
 \frac{1}{n}
\sum_{x\in \T_n}\sum_{\eta\in\Z}\E_n\big[{\cal E}_x(t)\big]e^{2\pi ix\frac \eta n}(\mathcal F G)^\star(\eta)=\frac{1}{n}\sum_{x\in \T_n}\E_n\big[{\cal E}_x(t)\big]G\left(\frac{x}{n}\right)
\end{equation}
and \eqref{022303} follows.
%
%
%
Identity \eqref{022303} can be also interpreted as follows:  the $k$-average of the Wigner distribution gives the Fourier transform of the energy:
\begin{equation}
  \label{eq:2}
  \frac 1n \sum_{k\in\widehat{\T}_n} W_n^+(t,\eta,k) = \frac 1{2n} \sum_{x\in\T_n} e^{-2\pi i x\frac \eta n}\;  {\bb E_n \big[}|\psi_x(t)|^2{\big]} 
  = \frac 1n \; {\bb E_n\bigg[}\widehat{\mc E}\left(t,\frac{\eta}{n}\right){\bigg]},\quad \eta\in\Z.
\end{equation}
%
To close the equations governing the evolution of $\mb W_n^+(t)$, we need to define three other Wigner-type functions. We let
\begin{align}
 W_n^-(t,\eta,k)& :=  \frac{1}{2n} \bb E_n\Big[ \widehat\psi^\star\Big(t,-k-\frac{\eta}{n}\Big) \widehat\psi(t,-k)\Big]
=(W_n^+)^\star(t,-\eta,-k),\label{eq:w-}\\
 Y_n^+(t,\eta,k) & := \frac{1}{2n} \bb E_n\bigg[\widehat\psi\Big(t,k+\frac{\eta}{n}\Big) \widehat\psi(t,-k)\bigg], \label{eq:y+}\\
 Y_n^-(t,\eta,k) & :=\frac{1}{2n} \bb E_n\bigg[ \widehat\psi^\star\Big(t,-k-\frac{\eta}{n}\Big) \widehat\psi^\star(t,k)\bigg]
= \big(Y_n^+\big)^\star(t,-\eta,-k).\label{eq:y-}
\end{align}
\begin{remark}Note that an easy change of variable gives
\begin{equation}
\label{eq:sumequal}
\frac{1}{n}\sum_{k\in \widehat\T_n} W_n^+(t,\eta,k)=\frac{1}{n}\sum_{k\in\widehat\T_n}W_n^-(t,\eta,k), 
\end{equation}
for any $t\geqslant 0$ and $\eta \in \bb Z$. This remark will be useful in what follows.\end{remark}

Thanks to identity \eqref{022303}, we have reduced the proof of Theorem \ref{theo:hydro1} -- and more precisely the proof of convergence \eqref{eq:convergence} --  to the investigation of the Wigner sequence $\{\mb W_{n}^+(\cdot), \mb Y_{n}^+(\cdot), \mb Y_{n}^-(\cdot), \mb W_{n}^-(\cdot)\}_{n}$. The next sections are split as follows:
\begin{enumerate}
\item we first prove that this last sequence is pre-compact (and therefore admits a limit point) in Section \ref{ssec:weak};
\item then, we characterize that limit point in several steps: \begin{enumerate}
\item we write a decomposition of the Wigner distribution into its mechanical and thermal parts in Section \ref{ssec:decomp1};
\item the convergence of the mechanical part is achieved in Section \ref{ssec:asympw} and Section \ref{sec:wign-distr-assoc};
\item to solve the thermal part, we need to take its Laplace transform in Section \ref{sec:lap1}, and then to study the dynamics it follows in Section \ref{sec:time} and Section \ref{sec:lap};
\item the convergence statements for this Laplace transform are given in Section \ref{sec:proof}, the main results being Proposition \ref{lem:mech} and Proposition \ref{lemma3};
\item finally, we go back to the convergence of the Wigner distributions in Theorem \ref{theo:limitW}, and then to the conclusion of the proof of Theorem \ref{theo:hydro1} in Section  \ref{sec:end}.
\end{enumerate}

\end{enumerate}

\subsection{Properties of the Wigner distributions}

\subsubsection{Weak convergence}\label{ssec:weak}

From \eqref{eq:4} we have that, for any $n \geqslant 1$ and $G \in \mc C^\infty(\T \times \T)$, 
\begin{align}
\label{061407}
\big|\big\langle &\mb W^+_n(t), G\big\rangle\big| \\
& \leqslant 
\frac{1}{(2n)^2}\sup_{\eta\in\Z} \bigg\{\sum_{k\in\widehat{\T}_n}
\bb E_n\Big[ \Big|\widehat\psi\Big(t,k+\frac{\eta}{n}\Big)\Big|^2\Big]\bigg\}^{\frac12} 
\bigg\{ \sum_{k\in\widehat{\T}_n}\bb E_n\Big[ \big|\widehat\psi(t,k)\big|^2\Big]\bigg\}^{\frac12} \|G\|_0\nonumber\\
& \le \frac{\|G\|_0}{2n}\; \bb
  E_n\bigg[\sum_{x\in\T_n} {\cal E}_x(t)\bigg] \le C \|G\|_0 , \nonumber
\end{align}
where the norm $ \|G\|_0 $ is defined by \eqref{022104}.
Hence, for the corresponding dual norm, we have the bound 
\begin{equation}
\label{eq:bound}
\sup_{t \geqslant 0}\sup_{n\geqslant 1}\bigg\{\big\|\mb W^+_n(t)\big\|_{0}' + \big\|\mb W^-_n(t)\big\|_{0}'+\big\|\mb Y^+_n(t)\big\|_0'+\big\|\mb Y^-_n(t)\big\|_0'\bigg\} \leqslant 4C,
\end{equation}
which implies weak convergence. Note that condition \eqref{eq:conv-ener-init} ensures that, if $G(u,v)\equiv
G(u)$ at the initial time $t=0$, then we have   
\begin{align*}
\lim_{n\to+\infty}  \big\langle \mb W^\pm_n(0), G\big\rangle & = \int_{\T} e_0(u) \;  G^\star(u) \; \dd   u, 
\end{align*}

\subsubsection{Decomposition into the mechanical and fluctuating part} \label{ssec:decomp1}

We decompose the  wave function into its
mean w.r.t.~${\bb E}_n$ and its fluctuating part, as follows:
\begin{equation*}
\psi_{x}(t)=\overline\psi_{x}(t) +\widetilde\psi_{x}(t), \qquad x \in \T_n,\; t \ge 0. 
\end{equation*}
Notice that for the initial data we have $\overline\psi_{x}(0)= r_0\left(\frac{x}{n}\right)$. 
It need not be true for $t>0$.
The  Fourier transform of the sequences $\{\overline\psi_{x}(t)\}$ and $\{\widetilde\psi_{x}(t)\}$ 
shall be denoted by $\widehat{\overline\psi}(t,k)$ and $\widehat{\widetilde\psi}(t,k)$.
The deterministic function $\widehat{\overline\psi}(t,k)$ satisfies the autonomous equation:
\begin{align}
\dd   \widehat{\overline\psi}(t,k) = 
&
-n^2 \Big(2i \sin^2(\pi k) \widehat{\overline\psi}(t,k)+\sin(2\pi k)\widehat{\overline\psi}^\star(t,-k)\Big) \; \dd   t
\notag\\
&- n^2\gamma\; \Big\{ \widehat{\overline\psi}(t,k)-\widehat{\overline\psi}^\star(t,-k)\Big\} \;  \dd t,\label{eq:dynamics-overlinepsi}
\end{align}
with   initial condition  $\widehat{\overline\psi}(0,k)=(\mc F_n r_0)(k).$

\medskip

The Wigner distribution $\mb W_n^+(t)$ can be decomposed accordingly as follows:
\begin{equation}
\mb W_n^+(t) = \overline{\mb W}^+_n(t)+ \widetilde{\mb W}_n^+(t), \label{eq:decompo}
\end{equation}
where the Fourier transforms of $\overline{\mb W}_n^+(t)$ and $\widetilde{\mb W}_n^+(t)$ are given by
\begin{align}
\overline{W}_n^+(t,\eta,k)&:= \frac{1}{2n} \widehat{\overline \psi}\Big(t,k+\frac{\eta}{n}\Big)\; \widehat{\overline \psi}^\star(t,k) 
\label{eq:Wbar}\\
\widetilde{W}_n^+(t,\eta,k)&:= \frac{1}{2n}
\E_n\bigg[\widehat{\widetilde \psi}\Big(t,k+\frac{\eta}{n}\Big)\; {\widehat{\widetilde \psi}}^\star(t,k)\bigg],\label{eq:Wtilde}
\end{align}
for any $(\eta,k)\in\Z\times \widehat{\T}_n$.

\medskip

At initial time $t=0$, to simplify notations we write $\overline{W}_n^+(\eta,k):=
 \overline{W}_n^+(0,\eta,k)$ and $\widetilde{W}_n^+(\eta,k):=
 \widetilde{W}_n^+(0,\eta,k)$. Note that the mean part is completely explicit: we   have
 \begin{equation}
\overline{W}_n^+(\eta,k)  =\frac{1}{2n}({\cal F}_nr_0)\left(k+\frac{\eta}{n}\right)({\cal F}_nr_0)^\star(k), \label{eq:equality}
\end{equation}
Recalling \eqref{x111}, the initial thermal energy spectrum can be rewritten as
\[
\mathfrak{u}_n(0,k)=\frac{1}{2n}\E_n\Big[ \big|\widehat{\widetilde\psi}(0, k) \big|^2 \Big].
\]
Reproducing the decomposition \eqref{eq:y+}  one can easily write similar definitions for
 $\overline{\mb W}_n^-$, $\widetilde{\mb W}_n^-$ and the respective  $\overline{\mb Y}_n^\pm$, $\widetilde{\mb Y}_n^\pm$
 distributions. Due to the fact that $\widebar \psi_x (0)= r_0(\frac x n)$
 and is  real-valued (and extending the convention of omitting the
 argument $t=0$ for all Wigner-type distributions) we have
 \begin{equation}
 \overline{W}_n^-(\eta,k) = \overline{W}_n^+(\eta,k)=
 \overline{Y}_n^+ (\eta,k)= \overline{Y}_n^-(\eta,k)=W_n(r_0\; ;\; \eta,k), \label{eq:allequal}
\end{equation}
where we define
\begin{equation}
  \label{eq:Wn}
  W_n(r\; ;\; \eta,k) :=\frac{1}{2n}({\cal F}_nr)\left(k+\frac{\eta}{n}\right)({\cal F}_nr)^\star(k),
\end{equation}
for any $r\in {\cal C}(\T)$ and $(\eta,k)\in\Z\times\widehat{\T}_n$.

\subsubsection{Asymptotics of $\widetilde{W}_n^+$} \label{ssec:asympw}

Throughout the remainder of the paper we shall use the following
notation: given a function $f:\widehat{\T}_n\to\mathbb C$ we
denote its $k$-average by
\begin{equation}
\label{square}
\big[f(\cdot)\big]_n:=\frac1n\sum_{k\in \widehat{\T}_n}f(k).
\end{equation} 
The initial fluctuating Wigner function 
is related, as $n\to+\infty$, to the initial thermal energy 
$e_{\rm thm}(0,u)= e_0(u) - r_0^2(u)/2$ as follows:
\begin{equation}
\label{eq:convergence_tilde}
\left[ \widetilde W_n^+(\eta,\cdot)\right]_n \xrightarrow[n\to\infty]{} \big(\mc F e_{\rm thm}(0,\cdot)\big)(\eta), 
\qquad \text{for any } \eta \in \bb Z.
\end{equation}
The last convergence follows from Assumption \ref{ass} and from an explicit computation that yields:
\[
\left[ \widetilde W_n^+(\eta,\cdot)\right]_n= 
\frac{1}{n}\sum_{x\in\T_n} \bb E_n\big[\mc E_x(0)\big] e^{-2\pi i \eta \frac x n}
- \mc F_n\Big(\frac{r_0^2}{2}\Big)\Big(\frac \eta n\Big).
\]
In addition, Assumption \ref{ass2} on the initial spectrum (see \eqref{eq:uniform-int}) implies that
\begin{equation}
\label{071407}
w_*:=\sup_{n\ge1}{\sup_{\eta\in\bb Z}} \Bigg\{ \sum_{\iota\in\{-,+\}}
\left[\left|\widetilde W_n^\iota(\eta,\cdot)\right|^2 +\left|\widetilde Y_n^\iota(\eta,\cdot)\right|^2\right]_n\Bigg\} <+\infty.
\end{equation}

\subsubsection{Wigner distribution associated to a macroscopic smooth profile}
\label{sec:wign-distr-assoc}

Similarly to \eqref{eq:Wn}, given a continuous real-valued function $\{r:=r(u)\; ; \; u\in \T \}$, define 
\begin{equation}
\label{040704}
W(r\; ;\; \eta,\xi) :=\frac12 ({\mc F} r)(\xi+\eta) (\mc F r)^\star(\xi),\qquad (\eta, \xi)\in\Z^2.
\end{equation}
Observe that, for any $\eta \in \Z$,
\begin{equation}
  \label{eq:s1}
  \sum_{\xi\in\Z} W(r\; ;\; \eta,\xi) = \frac 12 {\cal F}(r^2) (\eta).
\end{equation}
\begin{proposition}
\label{propo021004}
Suppose that $r\in {\cal C}(\T)$ and $G\in{\cal C}^\infty(\T\times \T)$. Then
\begin{equation}
\label{050704b}
\begin{split}
  \lim_{n\to+\infty}\sum_{\eta\in\Z}\Big[W_n(r\; ;\; \eta,\cdot)
  ({\cal F}G)^\star(\eta,\cdot)\Big]_n &=\sum_{\eta\in\Z}
  \bigg(\sum_{\xi\in \Z} W(r\; ; \; \eta,\xi) \bigg) ({\cal
    F}G)^\star(\eta,0)\\
 & = {\frac 12} \int_\T r^2(u) {G^\star}(u,0) \; \dd u .
\end{split}
\end{equation}

\end{proposition}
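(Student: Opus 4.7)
The second equality is immediate from \eqref{eq:s1}, which gives $\sum_{\xi\in\Z} W(r;\eta,\xi) = \tfrac12 (\mc Fr^2)(\eta)$, combined with Parseval's identity between $\T$ and $\Z$ applied to the pair $\big(r^2,G(\cdot,0)\big)$. For the first limit, I would decompose
\begin{equation*}
(\mc FG)^\star(\eta,k) = (\mc FG)^\star(\eta,0) + \bigl[(\mc FG)^\star(\eta,k) - (\mc FG)^\star(\eta,0)\bigr];
\end{equation*}
the first contribution will reproduce the expected limit, while the second will vanish.

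For the main contribution, a direct computation from definition \eqref{eq:Wn} using the discrete orthogonality $\sum_{k\in\widehat\T_n} e^{2i\pi jk} = n$ when $j\equiv 0 \pmod n$ and $0$ otherwise, gives the explicit identity
\begin{equation*}
\bigl[W_n(r;\eta,\cdot)\bigr]_n = \frac{1}{2n}(\mc F_n r^2)\!\left(\tfrac{\eta}{n}\right) \xrightarrow[n\to\infty]{} \tfrac12 (\mc Fr^2)(\eta),
\end{equation*}
by continuity of $r^2$ on $\T$. Together with the trivial uniform bound $|[W_n(r;\eta,\cdot)]_n| \leq \tfrac12 \sup_{u\in\T}|r(u)|^2$ and the rapid decay of $\eta\mapsto (\mc FG)^\star(\eta,0)$ supplied by \eqref{eq:decay}, dominated convergence in $\eta\in\Z$ delivers the expected limit $\sum_{\eta} \tfrac12 (\mc Fr^2)(\eta)(\mc FG)^\star(\eta,0)$.

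To handle the remainder I would expand $G$ as a double Fourier series $G(u,v) = \sum_{\eta,\xi\in\Z} H(\eta,\xi)\, e^{2i\pi(u\eta+v\xi)}$, whose coefficients $H(\eta,\xi)$ decay faster than any polynomial in $(\eta,\xi)$ by smoothness of $G$, so that
\begin{equation*}
(\mc FG)^\star(\eta,k) - (\mc FG)^\star(\eta,0) = \sum_{\xi\in\Z} H^\star(\eta,\xi)\bigl(e^{-2i\pi k\xi}-1\bigr).
\end{equation*}
The same orthogonality argument then gives, for each $\xi\in\Z$,
\begin{equation*}
\bigl[W_n(r;\eta,\cdot)\, e^{-2i\pi(\cdot)\xi}\bigr]_n = \frac{1}{2n}\sum_{y\in\T_n} r\!\left(\tfrac{y+\xi}{n}\right) r\!\left(\tfrac{y}{n}\right) e^{-2i\pi y\eta/n},
\end{equation*}
which tends to the same limit $\tfrac12 (\mc Fr^2)(\eta)$ as $[W_n(r;\eta,\cdot)]_n$, thanks to uniform continuity of $r$ on $\T$ (the shift $\xi/n$ goes to $0$ with $n$). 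Consequently $[W_n(r;\eta,\cdot)(e^{-2i\pi(\cdot)\xi}-1)]_n \to 0$ for each fixed $(\eta,\xi)\in\Z^2$, with a uniform bound of $\sup_{u\in\T}|r(u)|^2$; combined with the rapid decay of $H(\eta,\xi)$, a double dominated convergence argument in $(\eta,\xi)$ makes the remainder vanish. The only real difficulty is bookkeeping: correctly matching the discrete shifts $y+\xi \pmod n$ with the periodicity of $r$ on $\T$, and controlling the nested sums in $\eta$ and $\xi$ simultaneously.
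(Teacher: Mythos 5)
Your argument is correct and reaches the same limit, but by a genuinely different route than the paper. The paper Fourier-expands the profile $r$ (so it must first assume $r\in\mc C^\infty(\T)$, truncate the $\xi,\xi'$ sums at $n^\rho$ using rapid decay of $\mc Fr$, and then invoke an unspecified approximation step to recover merely continuous $r$), and it keeps $(\mc FG)^\star(\eta,\cdot)$ evaluated at the discrete points $m/n$ throughout, deducing concentration at $m\equiv 0\pmod n$ from the constraint $\delta_{n,m+\xi'}$ or $\delta_{0,m+\xi'}$. You instead leave $r$ alone and Fourier-expand $G$ in the \emph{second} variable: the discrete orthogonality identity $\bigl[W_n(r;\eta,\cdot)e^{-2i\pi(\cdot)\xi}\bigr]_n=\frac{1}{2n}\sum_{x}r\bigl(\tfrac{x+\xi}{n}\bigr)r\bigl(\tfrac{x}{n}\bigr)e^{-2i\pi x\eta/n}$ is exact, and continuity of $r$ alone turns these into Riemann sums converging to $\tfrac12\mc F(r^2)(\eta)$ for each fixed $\xi$, which neatly eliminates the paper's smooth-approximation step for $r$. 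The price you pay is on the $G$ side: the paper's convention is that $\mc C^\infty(\T\times\T)$ means smooth \emph{only in the first variable}, so your claim that $H(\eta,\xi)$ decays rapidly in $\xi$ is not automatic, and your double dominated convergence needs $\sum_{\eta,\xi}|H(\eta,\xi)|<\infty$. This is harmless in the paper's applications (there $G\in\mc P_M$ is a trigonometric polynomial, so the $\xi$-sum is finite), and in general it is repaired by noting that $\phi\mapsto[W_n(r;\eta,\cdot)\phi]_n$ is uniformly bounded by $\tfrac12\sup|r|^2\,\|\phi\|_\infty$ (Cauchy--Schwarz plus Parseval), so convergence on trigonometric polynomials in $v$ extends by density to all $\phi\in\mc C(\T)$; you should add that one line. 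With that patch your proof is complete and, in my view, cleaner than the paper's.
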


\begin{proof} We prove the proposition under the assumption that
  $r\in {\cal C}^\infty(\T)$. The general case can be obtained by an
  approximation of a continuous initial profile by a sequence of
  smooth ones.

Using the dominated convergence theorem we conclude that the expression 
on the left hand side of \eqref{050704b} equals 
$\sum_{\eta\in\Z} \lim_{n\to+\infty} f_n(\eta)$, with
\begin{equation}\label{eq:feta}
f_n(\eta):=\Big[ \overline{W}_n^+(\eta,\cdot) ({\cal F}G)^\star(\eta,\cdot)\Big]_n,\qquad \eta\in\Z.
\end{equation}
This can be written as
$$
f_{n}(\eta)=\frac{1}{2n^2}\sum_{k\in\widehat{\T}_n}\sum_{x,x'\in\T_n}e^{-2\pi i \eta\frac{x}n}e^{2\pi ik(x'-x)}r\Big(\frac{x}{n}\Big)
r\Big(\frac{x'}{n}\Big) ({\cal F}G)^\star(\eta,k).
$$
Using Fourier representation (see \eqref{eq:fourierZZ}), we can write $f_{n}(\eta)$ as
$$
\frac{1}{2n^2}\sum_{\xi,\xi'\in\Z}\sum_{m,x,x'\in\T_n}e^{-2\pi i(m+\xi+\eta)\frac xn}e^{2\pi i (m+\xi')\frac {x'}n}\; 
(\mc F  r)^\star(\xi)
(\mc F  r)(\xi') ({\cal F}G)^\star\Big(\eta,\frac{m}{n}\Big).
$$
Due to  smoothness of $r(\cdot)$, its Fourier coefficients decay rapidly. 
Therefore for a fixed $\eta \in \bb Z$ and for any $\rho\in(0,1)$  we have that the limit 
$\lim_{n\to+\infty} f_{n}(\eta)$ equals
$$
\lim_{n\to+\infty}\frac{1}{2n^2}\sum_{\substack{|\xi|\le n^{\rho}\\|\xi'|\le n^{\rho}}}\sum_{m,x,x'\in\T_n}e^{-2\pi i(m+\xi+\eta)\frac xn}e^{2\pi i (m+\xi')\frac{x'}n}\; 
(\mc F  r)^\star(\xi)(\mc F  r)(\xi') ({\cal F}G)^\star\Big(\eta,\frac{m}{n}\Big).
$$
Summing over $x,x'$ we conclude that $ \lim_{n\to+\infty} f_{n}(\eta)$ equals
\[
\lim_{n\to+\infty} \frac{1}{2}\sum_{\substack{|\xi|\le n^{\rho}\\|\xi'|\le n^{\rho}}}
\sum_{m\in\T_n}1_{\Z}\Big(\frac{m+\xi+\eta}{n}\Big)1_{\Z}\Big(\frac{m+\xi'}{n}\Big)
(\mc F  r)^\star(\xi)(\mc F  r_0)(\xi') ({\cal F}G)^\star\Big(\eta,\frac{m}{n}\Big).
\]
Taking into account the fact that $m\in\T_n$ and the magnitude of $\xi,\xi'$ is negligible when compared with $n$ we conclude that the terms under the summation on the right hand side are non zero only if $m+\xi'=0$ and  $m+\xi+\eta=0$, or $m+\xi'=n$ and  $m+\xi+\eta=n$. Therefore,
\begin{align*}
\lim_{n\to+\infty} f_{n}(\eta) & =
\lim_{n\to+\infty} \frac{1}{2}\sum_{\substack{|\xi|\le n^{\rho}\\|\xi'|\le n^{\rho}}}
\sum_{m\in\T_n}\delta_{n,m+\xi+\eta}\;\delta_{n,m+\xi'}\; (\mc F  r)^\star(\xi)
(\mc F  r)(\xi') ({\cal F}G)^\star\left(\eta,\frac{m}{n}\right)\\
& \quad 
+\lim_{n\to+\infty} \frac{1}{2}\sum_{\substack{|\xi|\le n^{\rho}\\|\xi'|\le n^{\rho}}}
\sum_{m\in\T_n}\delta_{0,m+\xi+\eta}\;\delta_{0,m+\xi'}\; (\mc F  r)^\star(\xi)
(\mc F  r)(\xi') ({\cal F}G)^\star\left(\eta,\frac{m}{n}\right)
\\
&
=\frac{1}{2}\; ({\cal F}G)^\star(\eta,0)\sum_{\xi\in\Z}(\mc F  r)^\star(\xi)
(\mc F  r)(\xi+\eta)
\end{align*}
and formula \eqref{050704b} follows.
\end{proof}

\bigskip

In particular, for the initial conditions of our dynamics, Proposition \ref{propo021004} and \eqref{eq:allequal} imply:
\begin{equation}
\label{050704}
\begin{split}
  \lim_{n\to+\infty}\sum_{\eta\in\Z}\Big[\overline{W}_n^+(\eta,\cdot)({\cal F}G)^\star(\eta,\cdot)\Big]_n
&=\sum_{\xi,\eta\in\Z}W(r_0\; ; \; \eta,\xi)({\cal F}G)^\star(\eta,0) \\
&  = {\frac12} \int_\T r_0^2(u) {G^\star}(u,0) \; \dd u.
\end{split}
\end{equation}
One of the main point of the proof of our theorem is to show that this
convergence holds for  any macroscopic time $t>0$, i.e.
that for any compactly supported $G\in {\cal C}^\infty(\R_+\times\T^2)$ we have
  \begin{equation}
\begin{split}   \lim_{n\to+\infty}\sum_{\eta\in\Z}\int_0^{+\infty}\Big[\overline{W}_n^+(t, \eta,\cdot)&({\cal F}G)^\star(t,\eta,\cdot)\Big]_n\dd t\\
& =\sum_{\xi,\eta\in\Z}\int_0^{+\infty}W(r_t\; ; \; \eta,\xi)({\cal F}G)^\star(t,\eta,0)\dd t\\ & = \frac12\int_{\R_+\times \T} r^2(t,u) G(t,u) \; \dd t\dd u.\end{split} \label{eq:quest}
\end{equation}
This would amount to showing that the Wigner distribution $\overline{W}_n^+(t)$ associated to $\overline\psi_x(t)$ 
is asymptotically equivalent to 
the one corresponding  to the macroscopic profile $r\left(t, \cdot\right)$ via
\eqref{040704}. 
This fact is not a consequence of Theorem \ref{theo:hydro}, that implies only 
a weak convergence of  $\overline\psi_x(t)$ to $r\left(t, \cdot\right)$.
We will prove \eqref{eq:quest} in Proposition \ref{lem:mech} below, showing  the convergence of  the
 corresponding Laplace transforms (see \eqref{011207}).



\section{Strategy of the proof and explicit resolutions}
\label{sec:strategy}

\subsection{Laplace transform of Wigner functions} \label{sec:lap1}
Since our subsequent  argument is based on an application of the Laplace
transform of the Wigner functions, we give some  explicit formulas
for the object
that  can be written in case of our model. For any  bounded
complex-valued, Borel measurable function $ \R_+\ni t \mapsto f_t$ we define the Laplace operator $\mc L$ as:
\begin{equation*}
\mc L(f_\cdot)(\lambda):=\int_{0}^{+\infty} e^{-\lambda t} f_t\; \dd t, \qquad \lambda >0.
\end{equation*} 
Given the solution $r_t = r(t,\cdot)$ of \eqref{eq:elong-evol}, we  define the Laplace transform of the Wigner distribution \eqref{040704} associated to a macroscopic profile, as follows: for any $(\lambda,\eta,\xi) \in \R_+\times \bb Z^2$, 
\begin{equation*}
  w(r_\cdot\; ;\;  \eta, \xi)(\lambda)=\mathcal L\big(W(r_\cdot\; ; \; \eta,\xi)\big)(\lambda)  = \int_0^{+\infty} e^{-\lambda t} \; W(r_t\; ;\; \eta,\xi) \; \dd t.
\end{equation*}
The following formulas are easily deduced, by a direct calculation, from \eqref{eq:elong-evol}, and are left to the reader:
\begin{lemma}
  \label{lemma-laplace}
  For any $(\lambda,\eta,\xi) \in \R_+\times \bb Z^2$ we have
  \begin{equation}
\label{011007}
 w(r_\cdot \; ;\;   \eta, \xi)(\lambda) = \frac{W(r_0\; ;\; \eta,\xi)}{\frac {2\pi^2}{\gamma}\left[\xi^2 + (\eta+\xi)^2\right] + \lambda}.
\end{equation}
Consequently,
\begin{align}
 \frac12 \mathcal L \Big({\cal F} \left((\partial_u r_t)^2\right)(\eta)\Big)(\lambda)& = \frac12\int_0^{+\infty} e^{-\lambda t} \; \mc F\left((\partial_u r_t)^2\right)(\eta) \; \dd t \notag\\ & =
    4\pi^2 \sum_{\xi\in\Z} (\eta+\xi) \xi \; w(r_\cdot \; ;\;   \eta, \xi)(\lambda). \vphantom{\int_0^{+\infty}} \label{eq:s4}
\end{align}
\end{lemma}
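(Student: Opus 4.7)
The statement is a direct computation using explicit solutions of the linear heat equation in Fourier variables, so the plan has no real conceptual obstacle---only the need to track signs and Fourier conventions carefully.

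First, I would solve \eqref{eq:elong-evol} on the Fourier side. Taking the continuous Fourier transform in $u$, the equation $\partial_t r_t = \frac{1}{2\gamma}\partial_{uu}^2 r_t$ becomes the decoupled family of ODEs $\partial_t (\mc F r_t)(\xi) = -\frac{2\pi^2\xi^2}{\gamma}(\mc F r_t)(\xi)$, so
\begin{equation*}
  (\mc F r_t)(\xi) \;=\; e^{-\frac{2\pi^2\xi^2}{\gamma}t}\,(\mc F r_0)(\xi), \qquad \xi\in\Z.
\end{equation*}
Plugging this into the definition \eqref{040704} of $W(r_t;\eta,\xi) = \tfrac12 (\mc F r_t)(\xi+\eta)(\mc F r_t)^\star(\xi)$, the two exponentials combine to give
\begin{equation*}
  W(r_t;\eta,\xi) \;=\; e^{-\frac{2\pi^2}{\gamma}\left[(\xi+\eta)^2+\xi^2\right]t}\,W(r_0;\eta,\xi).
\end{equation*}
Integrating $\int_0^\infty e^{-\lambda t}(\cdot)\,\dd t$ yields \eqref{011007} immediately.

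For the second identity, I would first establish the pointwise (in $t$) relation
\begin{equation*}
  \tfrac12\,\mc F\!\left((\partial_u r_t)^2\right)(\eta) \;=\; 4\pi^2\sum_{\xi\in\Z}(\eta+\xi)\,\xi\;W(r_t;\eta,\xi),
\end{equation*}
and then apply $\mc L(\cdot)(\lambda)$ to both sides. To prove this identity, I would write $\partial_u r_t$ via its Fourier series, so $(\partial_u r_t)(u)=\sum_\xi 2\pi i\xi\,(\mc F r_t)(\xi)e^{2\pi i\xi u}$; squaring and reading off the $\eta$-th Fourier coefficient of the product gives the convolution
\begin{equation*}
  \mc F\!\left((\partial_u r_t)^2\right)(\eta) \;=\; -4\pi^2\sum_{\xi\in\Z}(\eta-\xi)\,\xi\,(\mc F r_t)(\eta-\xi)(\mc F r_t)(\xi).
\end{equation*}
Since $r_t$ is real-valued, $(\mc F r_t)^\star(\xi)=(\mc F r_t)(-\xi)$, so the change of variable $\xi\mapsto -\xi$ converts $(\mc F r_t)(\eta-\xi)(\mc F r_t)(\xi)$ into $(\mc F r_t)(\eta+\xi)(\mc F r_t)^\star(\xi)=2W(r_t;\eta,\xi)$ while turning $(\eta-\xi)\xi$ into $-(\eta+\xi)\xi$. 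The two sign flips combine to give exactly the claimed identity, and Fubini (justified by the rapid decay in $\xi$ of $(\mc F r_t)(\xi)$ for $t>0$) lets me interchange the Laplace integral with the sum to obtain \eqref{eq:s4}.

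The only step that requires care is the sign bookkeeping in the change of variable for the convolution sum; everything else is a one-line exponential integration or a direct substitution. I would also briefly remark that the summability needed to interchange $\sum_\xi$ and $\int_0^\infty e^{-\lambda t}\dd t$ is ensured by \eqref{011007}, whose denominator grows quadratically in $|\xi|$, together with the fact that $|W(r_0;\eta,\xi)|\leqslant\tfrac12|(\mc F r_0)(\xi+\eta)||(\mc F r_0)(\xi)|$ is summable whenever $r_0$ is smooth (and can be handled by approximation for $r_0\in\mc C(\T)$, as already done in the proof of Proposition~\ref{propo021004}).
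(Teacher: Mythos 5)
Your proof is correct and is exactly the "direct calculation" that the paper leaves to the reader: solving \eqref{eq:elong-evol} mode by mode to get $W(r_t;\eta,\xi)=e^{-\frac{2\pi^2}{\gamma}[(\xi+\eta)^2+\xi^2]t}W(r_0;\eta,\xi)$, integrating the exponential for \eqref{011007}, and using the convolution identity for $\mc F((\partial_u r_t)^2)$ together with the reality of $r_t$ for \eqref{eq:s4}. The sign bookkeeping in the change of variable $\xi\mapsto-\xi$ and the summability remark are both handled correctly, so nothing is missing.
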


Finally, we define
$\mb w_n^+$  the Laplace transform of $\mb W_n^+$ as 
the tempered distribution given for any $ G \in \mc C^\infty(\T\times\T)$ and $\lambda >0$ by  
\begin{equation}
\big\langle \mb w^+_n(\la),\; G \big\rangle=\int_0^{+\infty}e^{-\la
  t}\; \big\langle \mb W^+_n(t),\; G \big\rangle \,\dd t
:= \frac1n  \sum_{k\in\widehat{\T}_n}\sum_{\eta\in\Z} w^+_n(\la,\eta,k) (\mc FG)^\star(\eta,k), \label{eq:def-laplace} 
\end{equation}
where
$w_n^+$ is  the Laplace transform of $W_n^+$  as follows:
\[
{w}^+_n(\lambda, \eta,k):= \int_0^{+\infty} e^{-\lambda t} \;
W_n^+(t,\eta,k)\; \dd   t, \qquad \quad
(\lambda,\eta,k)\in\R_+\times \Z\times
\widehat{\T}_n.
\] 
In a similar fashion we can also define  $\mb w_n^-(\la)$ and $\mb
y_n^\pm(\la)$ the Laplace transforms of $\mb W_n^-(t)$ and $\mb
Y_n^\pm(t)$, respectively, and their counterparts $w_n^-$, and $y_n^\pm$.

\subsection{Dynamics of the Wigner distributions}
\label{sec:time}

Using the time evolution equations \eqref{eq:dynamics-psi}, one can first write a closed system of evolution equations for $W_n^\pm(t),  Y_n^\pm(t)$ defined respectively in \eqref{010803}, \eqref{eq:w-}, \eqref{eq:y+}, \eqref{eq:y-}. 

For that purpose we first define
two functions $\delta_n s$ and $\sigma_ns$   as follows: for any $(\eta,k)\in\Z\times\widehat \T_n$,
\begin{align}
\label{012308}
(\delta_n s)(\eta,k)&:=2n\Big(\sin^2\left(\pi\Big(k+\frac{\eta}{n}\Big)\right)-\sin^2(\pi k)\Big), \vphantom{\Bigg\{}
\\
(\sigma_ns)(\eta,k)&:=2\Big(\sin^2\left(\pi\Big(k+\frac{\eta}{n}\Big)\right)+\sin^2(\pi k)\Big).\nonumber
 \end{align}
 For the brevity sake, we drop the variables $(t,\eta,k) \in
 \R_+\times\Z \times \widehat \T_n$ from the subsequent
 notation. From \eqref{eq:dynamics-psi} one can easily check that:
\begin{equation}\label{eq:closed-syst} \left\{\begin{aligned}
\partial_t W^+_n& = -in  (\delta_ns)\;  W_n^+ -n^2\sin(2\pi k)\; Y_n^+-n^2\sin\big(2\pi\big(k+\tfrac{\eta}{n}\big)\big)\; Y_n^-  \\
&
 \quad + \gamma n^2\; \bb L\big(2W_n^+-{Y^+_n-Y_n^-}\big),\vphantom{\sum_{k\in\widehat{\T}_n}} \\
\partial_t Y^+_n & =n^2\sin(2\pi k)\; W_n^+-i n^2 (\sigma_n s) \; Y^+_n -n^2\sin\big(2\pi\big(k+\tfrac{\eta}{n}\big)\big)\; W_n^-\\
&
\quad + \gamma n^2\;\bb L\big(2Y^+_n - {W_n^+-W^-_n}\big)+ \gamma n\sum_{k\in\widehat{\T}_n} (Y_n^--Y_n^+),\\
\partial_t  Y^-_n & = n^2\sin\big(2\pi\big(k+\tfrac{\eta}{n}\big)\big)\; W_n^+ +i n^2 (\sigma_n s) \; Y^-_n -n^2\sin(2\pi k )\; W_n^-\\
& \quad +\gamma n^2 \;\bb L\big(2Y^-_n - {W_n^+-W^-_n}\big)+\gamma n \sum_{k\in\widehat{\T}_n} (Y_n^+-Y_n^-),\\
\partial_t W^-_n & = in(\delta_ns)\; W^-_n +n^2\sin\big(2\pi\big(k+\tfrac{\eta}{n}\big)\big)\;Y_n^++n^2\sin(2\pi k)\; Y_n^-\\
&
\quad + \gamma n^2 \;\bb L\big(2W^-_n - {Y^+_n-Y_n^-}\big), 
\end{aligned} \right.\end{equation}
where  $\bb L$ is the operator that is defined for any $f:\Z\times\widehat{\T}_n \to \C$ as 
\[
(\bb L f)(\eta,k):=\big[f(\eta,\cdot)\big]_n -
f(\eta,k),\qquad (\eta,k)\in \Z\times\widehat{\T}_n. 
\]
Recalling the decomposition \eqref{eq:decompo} and the evolution equations \eqref{eq:dynamics-overlinepsi} for the mean part of the wave function, 
we have similarly that $\overline W_n^\pm(t), \overline Y_n^\pm(t)$ 
satisfy the autonomous equations:
\begin{equation}\label{eq:closed-overline} \left\{\begin{aligned}
\partial_t \overline W^+_n& = -in  (\delta_ns)\; \overline W_n^+ -n^2\sin(2\pi k)\; \overline Y_n^+
-n^2\sin\big(2\pi\big(k+\tfrac{\eta}{n}\big)\big)\; \overline Y_n^- \\
&
 \quad - \gamma n^2 \big(2 \overline W_n^+-{\overline Y^+_n-\overline Y_n^-}\big), \vphantom{\Big(}\\
\partial_t \overline Y^+_n & =n^2\sin(2\pi k)\; \overline W_n^+-i n^2 (\sigma_n s) \; \overline Y^+_n 
-n^2\sin\big(2\pi\big(k+\tfrac{\eta}{n}\big)\big)\; \overline W_n^-\\
&
\quad - \gamma n^2\; \big(2 \overline Y^+_n - {\overline W_n^+-\overline W^-_n}\big), \vphantom{\Big(}\\
\partial_t  \overline Y^-_n & = n^2\sin\big(2\pi\big(k+\tfrac{\eta}{n}\big)\big)\; \overline W_n^+ +i n^2 (\sigma_n s) \;\overline Y^-_n 
-n^2\sin(2\pi k )\; \overline W_n^-\\
& \quad - \gamma n^2 \;\big(2\overline Y^-_n - {\overline W_n^+-\overline W^-_n}\big)\vphantom{\Big(}\\
\partial_t \overline W^-_n & = in(\delta_ns)\; \overline W^-_n +n^2\sin\big(2\pi\big(k+\tfrac{\eta}{n}\big)\big)\;\overline Y_n^+
+n^2\sin(2\pi k)\; \overline Y_n^-\\
&
\quad - \gamma n^2 \;\big(2\overline W^-_n - {\overline Y^+_n-\overline Y_n^-}\big), 
\end{aligned} \right.\end{equation}

\subsection{Laplace transform of  the dynamical system}
\label{sec:lap}

%

We deduce from \eqref{eq:closed-syst} an equation satisfied by 
$\mf w_n$ -- the four-dimensional vector of Laplace transforms of the Wigner functions defined by
$ \mf w_n:= [w_n^+,y_n^+,y_n^-,w_n^-]^{\rm T}.$
For the clarity  sake we  shall use the notation
\begin{align*}
{\mb 1} &: = [1,1,1,1]^{\rm T}, \nonumber \vphantom{\big\{} \quad
\rm e  :=[1,-1,-1,1]^{\rm T}, \nonumber \vphantom{\Big\{}\\ 
\mf v_n^0 & := \big[ W_n^+(0),
  Y_n^+(0),
 Y_n^-(0) ,
 W_n^-(0)\big]^{\rm T},\vphantom{\Big\{} \\ 
\overline{\mf v}_n^0 & := \big[ \overline W_n^+,
 \overline  Y_n^+,
 \overline Y_n^- ,
\overline  W_n^-\big]^{\rm T},\vphantom{\Big\{}
\qquad\widetilde{\mf v}_n^0  := \big[ \widetilde W_n^+,
\widetilde   Y_n^+,
\widetilde  Y_n^- ,
 \widetilde W_n^-\big]^{\rm T},\vphantom{\Big\{} \end{align*}
 and from \eqref{eq:decompo} we have $\mf v_n^0=\overline{\mf v}_n^0 + \widetilde{\mf v}_n^0$. We also remark that \eqref{eq:sumequal} implies: 
\[ \Big[ w_n^+(\lambda,\eta,\cdot)\Big]_n = \Big[ w_n^-(\lambda,\eta,\cdot)\Big]_n, \qquad \text{for any } (\lambda,\eta)\in\bb R_+ \times \bb Z.\]
Let us finally define $\mc I_n$ as the scalar product  
 \begin{equation*}
 \mc I_n(\lambda,\eta) := {\rm e} \cdot\Big[ {\mf w}_n(\lambda,\eta,\cdot)\Big]_n  = \Big[ \big(w_n^+-y_n^+-y_n^- + w_n^-\big)(\lambda,\eta,\cdot)\Big]_n.
\end{equation*}
As before, we shall often drop the
 variables $(\lambda,\eta,k)$ from the notations. 
 We are  now ready to take the Laplace transform of both sides of  \eqref{eq:closed-syst}:  we obtain a linear system that can be written for any $(\lambda,\eta,k)$ in the form
\begin{equation}
\label{eq:linear-syst}
(\mb M_n \; \mf w_n)(\lambda,\eta,k) = \mf v_n^0(\eta,k) + \gamma
n^2\;\mc I_n(\lambda,\eta)\; {\rm e},
\end{equation}
where the $2\times 2$ block matrix $\mb M_n:=\mb M_n(\lambda,\eta,k)$
is defined as follows:
\begin{equation}
\label{m-n}
\mb M_n:=\begin{bmatrix}
A_n & -n^2\; \gamma_n^{-} \;  {\rm Id}_2 \vphantom{\Big(}\\
-n^2\; \gamma_n^{+}\;  {\rm Id}_2& B_n \vphantom{\Big(}
\end{bmatrix},
\end{equation}
where, given a positive integer $N$,   $ {\rm Id}_N$ denotes the 
$N\times N$ identity matrix, and $A_n$, $B_n$ are $2\times 2$ matrices:
\[
A_n:=\begin{bmatrix}
a_n & -n^2\gamma^-  \vphantom{\Big(}\\
-n^2\gamma^+  & b_n \vphantom{\Big(}
\end{bmatrix},\qquad  B_n:=\begin{bmatrix}b_n^\star &-n^2\gamma^- 
  \vphantom{\Big(}\\
 -n^2\gamma^+ & a_n^\star \vphantom{\Big(}
\end{bmatrix}.
\]
Here and below,
\begin{equation}
\label{a-n}
\left\{\begin{aligned} a_n&:=\lambda + in(\delta_n s) + 2\gamma n^2,\\
b_n&:=\lambda + in^2(\sigma_n s) + 2\gamma n^2,\end{aligned}\right. \qquad \left\{\begin{aligned} 
\ga_n^\pm&:=\ga\pm\sin\big(2\pi\big(k+\tfrac \eta n\big)\big), \\ 
\ga^\pm&:=\ga\pm\sin(2\pi k).\end{aligned}\right.
\end{equation}
%
An elementary observation yields the following symmetry properties
\begin{equation}
\label{041307}
\ga_n^\pm(-\eta,-k)=\ga_n^\mp(\eta,k),\quad \ga^\pm(-\eta,-k)=\ga^\mp(\eta,k),
\end{equation}
By the linearity of the Laplace transform we can write 
\begin{equation} \label{eq:dec-def}{\mf w}_n=\overline{\mf w}_n + \widetilde{\mf w}_n, 
\end{equation} 
where $\overline{\mf w}_n$ is the Laplace transform of $(\overline W^+_n(t), \overline Y^+_n(t), \overline Y^-_n(t), \overline W^-_n(t))$, and
$\widetilde{\mf w}_n$ is the Laplace transform of $(\widetilde W^+_n(t), \widetilde Y^+_n(t), \widetilde Y^-_n(t), \widetilde W^-_n(t))$.

{Performing the Laplace transform of both sides of  \eqref{eq:closed-overline} 
we conclude that $\overline{\mf w}_n$ solves the equation
\begin{equation}\label{eq:mech-laplace}
{\mb M}_n \overline{\mf w}_n = \overline{\mf v}_n^0 = \overline{W}^+_n\; \mathbf 1 .
\end{equation}
 Using \eqref{eq:linear-syst} we conclude that
 $\widetilde{\mf w}_n$ solves}
\begin{equation}\label{eq:term-laplace}
  {\mb M}_n \widetilde{\mf w}_n = \widetilde{\mf v}_n^0 + \gamma n^2 \; {\mc I}_n \; {\rm e}. \vphantom{\Big(}
\end{equation}
Following \eqref{eq:dec-def} we also write $ {\mc I}_n =  \overline{\mc I}_n + \widetilde{\mc I}_n$, where
\begin{equation}
  \label{eq:Idec}
 \overline{\mc I}_n (\lambda,\eta) :=  {\rm e} \cdot
  \Big[ \overline{\mf
    w}_n(\lambda,\eta,\cdot)\Big]_n \quad \text{ and } \quad
 \widetilde{\mc I}_n(\lambda,\eta):= {\rm e} \cdot   \Big[\widetilde{\mf w}_n(\lambda,\eta,\cdot)  \Big]_n.\nonumber
\end{equation}
In Section \ref{app:inversion}, we show that the matrix ${\mb M}_n$ is invertible, therefore we can solve and rewrite \eqref{eq:mech-laplace} and \eqref{eq:term-laplace} as:
\begin{align}
  \label{eq:inverse-s1}
     \overline{\mf w}_n &= \overline{W}^+_n \;{\mb M}_n^{-1}\; \mathbf 1 , \vphantom{\Big\{}\\
  \label{eq:inverse-s2}
     \widetilde{\mf w}_n &=  {\mb M}_n^{-1}\; \widetilde{\mf v}_n^0 + \gamma n^2\; {\mc I}_n \; {\mb M}_n^{-1} \;{\rm e}. 
\end{align}
In Section \ref{sec:proof} we  study the contribution of
the terms appearing in the right hand sides of both
\eqref{eq:inverse-s1} and \eqref{eq:inverse-s2} that reflect upon the
evolution of the mechanical and fluctuating components  of the energy functional. 

\section{Proof of the hydrodynamic behavior of the energy}

In this section we conclude the proof of Theorem \ref{theo:hydro1}, up to technical lemmas that are proved in Section \ref{app:proof}.

\label{sec:proof}

\subsection{Mechanical energy $\overline{\mf w}_n$}
\label{sec:results-depend-only}





We start with the recollection of  the results concerning  the mechanical energy. 
The Laplace transform
$\overline{\mf w}_n$ is autonomous from the thermal part and satisfies \eqref{eq:inverse-s1}.
Let us introduce, for any $\lambda >0$ and $\eta \in \Z$, the \emph{mechanical Laplace-Wigner function}
\begin{equation}
  \label{eq:mech-en-macro}
\mc W_{\rm  mech}^+(\lambda,\eta):=\sum_{\xi\in\Z}\frac{
W(r_0\; ; \;\eta,\xi)}{\frac{2\pi^2}{\ga}[\xi^2+(\xi+\eta)^2]+\la}.
\end{equation}
From Lemma \ref{lemma-laplace}, it follows that $\mc W_{\rm  mech}^+(\lambda,\eta)$ is the Fourier-Laplace transform
of the mechanical energy density $e_{\rm mech}(t,u)=\frac12 \left(r(t,u)\right)^2$,
where $r(t,u)$ is the solution of \eqref{eq:elong-evol}. 

Given $M\in\bb N$ we denote  by ${\cal P}_M$ the subspace of ${\cal
    C}^\infty(\T\times \T)$ consisting of all trigonometric
  polynomials that are   finite linear combinations of  $e^{2\pi i\eta 
    u}e^{2\pi i\xi v}$, with $\eta\in\{-M,\ldots,M\}$, $\xi\in\bb Z$ and $u,v\in\T$.
    
\begin{proposition}[Mechanical part] \label{lem:mech}  For any
  $M\in\bb N$ there exists
  $\lambda_M >0$ such that for any $G\in{\cal P}_M$ and $\la>\la_M$ we have
\begin{equation}
\label{011207}
\begin{split}
  \lim_{n\to\infty} \sum_{\eta\in\Z} \Big[ \overline{\mf
    w}_n(\lambda,\eta,\cdot) (\mc F G)^\star(\eta,\cdot) \Big]_n\; & =
  {\bigg(}\sum_{\eta, \xi \in\Z} \frac{W(r_0\; ;
    \;\eta,\xi)}{\frac{2\pi^2}{\ga}[\xi^2+(\xi+\eta)^2]+\la} (\mc F
  G)^\star(\eta,0) {\bigg)} {\mb 1}\\
   & = {\bigg(\frac12}\int_\T \int_0^{+\infty} e^{-\lambda t} \; \left(r(t,u)\right)^2\; {G^\star}(u,0) \; \dd t\; \dd u{\bigg) \mathbf{1}}. 
\end{split}
\end{equation}
Moreover, for any $\eta\in\{-M,...,M\}$ 
\begin{align}
\lim_{n\to\infty}\Big\{\gamma n^2 \; \overline{\mc I}_n (\lambda,\eta) \Big\} & = 
  \frac{ 4\pi^2}{\gamma} \sum_{\xi\in\Z} \frac{\xi(\xi+\eta)W(r_0\; ; \; \eta,\xi)}{\frac{2\pi^2}\ga[\xi^2+(\xi+\eta)^2]+\la} \notag\\ & = \frac{1}{2\gamma} \mathcal L
\Big({\cal F} \left((\partial_u
  r)^2\right)(\eta)\Big)(\lambda).
\label{eq:nonlin1}
\end{align}

\end{proposition}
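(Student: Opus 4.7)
The starting point is the explicit representation \eqref{eq:inverse-s1}, namely $\overline{\mf w}_n(\lambda,\eta,k) = \overline{W}_n^+(\eta,k)\,\mathbf M_n^{-1}(\lambda,\eta,k)\mathbf 1$, together with the identification $\overline W_n^+ = W_n(r_0;\eta,k)$ from \eqref{eq:equality}. The plan is to combine an asymptotic analysis of $\mathbf M_n^{-1}\mathbf 1$ for large $n$ with the concentration argument used in the proof of Proposition \ref{propo021004}: after reducing to $r_0\in\mathcal C^\infty(\T)$ by approximation, the Fourier coefficients $(\mathcal F_n r_0)(k)$ are concentrated at the resonant points $k=\xi/n$ with $\xi\in\Z$ of moderate size, and only the values of $\mathbf M_n^{-1}\mathbf 1$ at those points will contribute in the limit.

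\textbf{Matrix asymptotics.} Decompose $\mathbf M_n = \gamma n^2\,\mathbf S + \mathbf T_n$, where $\mathbf S$ is the constant $4\times 4$ symmetric matrix extracted from \eqref{m-n} by suppressing all $\sin$-corrections and the subleading terms $\lambda$, $n(\delta_ns)$, $n^2(\sigma_ns)$. A direct diagonalisation shows that $\mathbf S$ has kernel $\R\mathbf 1$, a simple eigenvalue $4$ with eigenvector $\mathrm e$, and a double eigenvalue $2$ with eigenspace spanned by $(1,0,0,-1)^{\mathrm T}$ and $(0,1,-1,0)^{\mathrm T}$. Set $\mathbf P = \tfrac14\mathbf 1\mathbf 1^{\mathrm T}$ and $\mathbf Q = \mathbf I - \mathbf P$. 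A row-sum computation gives the useful identity $\mathbf 1^{\mathrm T}\mathbf T_n\mathbf 1 = 4\lambda$, so that $\mathbf P\mathbf T_n\mathbf P = \lambda\mathbf P$, and the Schur-complement formula yields
$$\mathbf P\mathbf M_n^{-1}\mathbf P \;=\; \bigl(\lambda\mathbf P - \mathbf P\mathbf T_n(\mathbf Q\mathbf M_n\mathbf Q)^{-1}\mathbf T_n\mathbf P\bigr)^{-1}.$$
At $k=\xi/n$, the expansions $n^2\sin(2\pi k)\sim 2\pi n\xi$, $n^2\sin(2\pi(k+\eta/n))\sim 2\pi n(\xi+\eta)$, $n(\delta_ns)\sim 2\pi^2[(\xi+\eta)^2-\xi^2]$ and $n^2(\sigma_ns)\sim 2\pi^2[\xi^2+(\xi+\eta)^2]$ give $\mathbf Q\mathbf T_n\mathbf 1 \sim 2\pi n\,\mathbf w'$ with $\mathbf w' := ((2\xi+\eta),\eta,-\eta,-(2\xi+\eta))^{\mathrm T}$. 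Since $\mathbf w'$ lies in the eigenspace of $\mathbf S$ with eigenvalue $2$, one computes $\mathbf w'^{\mathrm T}\mathbf S^+\mathbf w' = 2[\xi^2+(\xi+\eta)^2]$, producing the Schur correction $-\tfrac{2\pi^2}{\gamma}[\xi^2+(\xi+\eta)^2]\mathbf P$. Thus $\mathbf M_n^{-1}\mathbf 1\to (\lambda + D(\eta,\xi))^{-1}\mathbf 1$ at $k=\xi/n$, where $D(\eta,\xi):=\tfrac{2\pi^2}{\gamma}[\xi^2+(\xi+\eta)^2]$, while the $\mathbf Q$-component of $\mathbf M_n^{-1}\mathbf 1$ is of order $1/n$.

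\textbf{First identity.} Inserting the asymptotic $\mathbf M_n^{-1}\mathbf 1\sim(\lambda + D)^{-1}\mathbf 1$ into $\overline{\mf w}_n = \overline W_n^+\mathbf M_n^{-1}\mathbf 1$ and applying the localisation argument of Proposition \ref{propo021004} yields \eqref{011207}: smoothness of $r_0$ and the rapid decay \eqref{eq:decay} of $\mathcal F G$ restrict the effective sum to $|\xi|\le n^\rho$; continuity of $(\mathcal F G)^\star(\eta,\cdot)$ allows replacing $(\mathcal F G)^\star(\eta,k)$ by $(\mathcal F G)^\star(\eta,0)$ at $k=\xi/n$; and dominated convergence, justified uniformly in $n$ by \eqref{eq:bound} and by absolute summability of $\sum_{|\eta|\le M,\,\xi\in\Z}(\lambda + D(\eta,\xi))^{-1}$ for $\lambda >\lambda_M$, passes to the limit. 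The second equality in \eqref{011207} is then a direct consequence of Lemma \ref{lemma-laplace}.

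\textbf{Second identity and main obstacle.} Since $\mathrm e\cdot\mathbf 1 = 0$, the quantity $\mathrm e\cdot\mathbf M_n^{-1}\mathbf 1$ sees only the $\mathbf Q$-component of $\mathbf M_n^{-1}\mathbf 1$. The leading contribution there, proportional to $\mathbf S^+\mathbf Q\mathbf T_n\mathbf 1 \sim \pi n\,\mathbf w'$, is itself orthogonal to $\mathrm e$ (a direct check: $\mathrm e\cdot\mathbf w' = 0$), so the first non-zero contribution arises from the next iteration of the Neumann series for $(\mathbf Q\mathbf M_n\mathbf Q)^{-1}$. A direct computation at $k=\xi/n$ of $\mathrm e\cdot\mathbf T_n\mathbf w'$, using the off-diagonal entries $\pm n^2\sin(2\pi k),\pm n^2\sin(2\pi(k+\eta/n))$ of $\mathbf T_n$, gives $16\pi n\,\xi(\xi+\eta)$, and combined with $\mathbf S^+\mathrm e = \tfrac14\mathrm e$ this leads to $\gamma n^2\,\mathrm e\cdot\mathbf M_n^{-1}\mathbf 1 \to \tfrac{4\pi^2\,\xi(\xi+\eta)}{\gamma(\lambda + D(\eta,\xi))}$ at $k=\xi/n$. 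Multiplying by $\overline W_n^+(\eta,\xi/n)\sim n\,W(r_0;\eta,\xi)$ and applying the same concentration mechanism as for the first identity produces the first line of \eqref{eq:nonlin1}; the second equality is an immediate consequence of \eqref{eq:s4}. The principal technical difficulty will be to make the Neumann expansion of $(\mathbf Q\mathbf M_n\mathbf Q)^{-1}$ uniform in $(\eta,k)$ -- and in particular to control the contribution of $k$ away from the resonant set $\{\xi/n:\xi\in\Z\}$, a range where $\mathbf M_n^{-1}$ is much smaller but the prefactor $\gamma n^2$ in \eqref{eq:nonlin1} is correspondingly large. The positivity $\mathrm{Re}(\mathbf M_n)\ge\gamma n^2\mathbf S$ should provide the required uniform bounds, and the threshold $\lambda >\lambda_M$ is precisely what allows $\lambda$ to dominate the dispersive corrections for all $|\eta|\le M$.
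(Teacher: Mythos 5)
Your proposal reaches the correct limits and shares the paper's overall architecture: the representation $\overline{\mf w}_n=\overline W_n^+\,\mb M_n^{-1}\mathbf 1$, pointwise asymptotics of $\mb M_n^{-1}$ at the resonant frequencies $k=\xi/n$, and the localisation/dominated-convergence argument of Proposition \ref{propo021004} to pass to the limit. What is genuinely different is how you extract the matrix asymptotics. The paper inverts $\mb M_n$ by brute force (explicit cofactors \eqref{022308}--\eqref{041203}, the determinant expansion \eqref{051403}, and the entrywise asymptotics of Lemma \ref{lem:estimates}), whereas you split $\mb M_n=\gamma n^2\mb S+\mb T_n$ with $\ker\mb S=\R\mathbf 1$ and run a Schur-complement/second-order perturbation argument. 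Your route is more structural: it explains at once why the limit in \eqref{eq:macrolim} is the rank-one projection onto $\mathbf 1$, why ${\rm e}\cdot\mb M_n^{-1}\mathbf 1$ appears only at second order (the cancellation ${\rm e}\cdot\mathbf w'=0$), and where the symbol $\frac{2\pi^2}{\gamma}[\xi^2+(\xi+\eta)^2]$ comes from. I checked your key identities ($\mathbf 1^{\rm T}\mb T_n\mathbf 1=4\lambda$, the spectrum $\{0,4,2,2\}$ of $\mb S$ with the stated eigenvectors, $\mathbf w'^{\rm T}\mb S^{+}\mathbf w'=2[\xi^2+(\xi+\eta)^2]$, and ${\rm e}\cdot\mb T_n\mathbf w'=16\pi n\,\xi(\xi+\eta)+O(1)$ at $k=\xi/n$); they are correct and reproduce the paper's limits \eqref{eq:macrolim} and \eqref{eq:macrolim2}.

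Two points need repair. First, $\mb M_n$ is not symmetric: the row sums give $\mb Q\mb T_n\mathbf 1\sim 2\pi n\,\mathbf w'$, but the column sums give $\mathbf 1^{\rm T}\mb T_n\mb Q\sim -2\pi n\,\mathbf w'^{\rm T}$. It is exactly this sign that makes the Schur correction equal to $+\frac{2\pi^2}{\gamma}[\xi^2+(\xi+\eta)^2]$ and yields $(\lambda+D)^{-1}$; as written, your ``correction $-\frac{2\pi^2}{\gamma}[\,\cdots]$'' would give $(\lambda-D)^{-1}$, contradicting your own (correct) conclusion. Second, for \eqref{eq:nonlin1} the obstacle you flag at non-resonant $k$ is real, but it is not resolved by smallness of the matrix element: the paper shows in \eqref{012004a} (via the explicit formulas \eqref{eq:explicit} and \eqref{012004}) that $n^2\,{\rm e}\cdot\mb M_n^{-1}\mathbf 1$ converges to $1/\gamma$ \emph{uniformly} on $\widehat\T_{n,\rho}$, so the high-frequency part of $\gamma n^2\overline{\mc I}_n$ is killed only by the rapid decay of ${\cal F}r_0$, i.e.\ by the negligibility of $\frac1n\sum_{k\in\widehat\T_{n,\rho}}\overline W_n^+(\eta,k)$, not by any decay of $\mb M_n^{-1}$. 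Your suggested tool ${\rm Re}(\mb M_n)=\lambda\,{\rm Id}_4+\gamma n^2\mb S$ gives $\|(\mb Q\mb M_n\mb Q)^{-1}\|\le(2\gamma n^2)^{-1}$ and $\|\mb M_n^{-1}\|\le\lambda^{-1}$, but these alone only yield ${\rm e}\cdot\mb M_n^{-1}\mathbf 1=O(1)$; obtaining the needed uniform $O(n^{-2})$ bound requires tracking the cancellation ${\rm e}\perp\mathbf w'$ and the size of $\mb P\mb M_n^{-1}\mb P$ uniformly over all regimes of $k$ -- which is precisely the content of the paper's explicit computation. So the plan is sound and fillable, but this uniform estimate is the step your sketch leaves open.
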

The proof of Proposition \ref{lem:mech} is exposed in Section \ref{sec:proof-proposition-mech}.

\subsection{The closing of thermal energy equation}
\label{sec:clos-therm-energy}

We now analyse equation \eqref{eq:inverse-s2} concerning the fluctuating part. 
After averaging \eqref{eq:inverse-s2} over $k\in\widehat{\T}_n$ and scalarly multiplying by ${\rm e}$,
 we obtain the  equation:
\begin{equation} \label{eq:equa-itilde}
\widetilde{\mc I}_n= \widetilde{z}_n^{(0)}  + \gamma n^2\; \big(\widetilde{\mc I}_n + \overline{\mc I}_n\big)\;  {\mc M}_n ,\end{equation}
where 
\begin{align*}
 \vphantom{\bigg(}\widetilde{z}_n^{(0)} &:=\Big[{\rm e}\cdot ({\bf M}_n^{-1}\;\widetilde{\mf v}_n^0)\Big]_n
  \\ 
{\mc M}_n&:= \Big[{\rm e}\cdot ({\bf M}_n^{-1}\;{\rm e})\Big]_n.
\end{align*}
Therefore from \eqref{eq:equa-itilde} we solve explicitly 
\begin{equation}
  \label{eq:ictilde}
  \widetilde{\mc I}_n (\lambda, \eta) = 
  \frac{n^2 \; \widetilde{z}_n^{(0)} (\lambda, \eta)  + \big(\gamma n^2\; \overline{\mc I}_n  (\lambda, \eta)\big)\; 
  \big( n^2{\mc M}_n  (\lambda, \eta)\big)}{n^2\big(1-\gamma n^2\; {\mc M}_n  (\lambda, \eta)\big)}.
\end{equation}
Asymptotics of $n^2\overline{\mc I}_n  (\lambda, \eta)$ is given in \eqref{eq:nonlin1}. Below we describe the   terms $n^2 \; \widetilde{z}_n^{(0)} (\lambda, \eta) $  and $n^2{\mc M}_n  (\lambda, \eta)$ that also appear in the right hand side of \eqref{eq:ictilde}.
\begin{lemma}
Fix $M\in\N$. There   exists $\lambda_M>0$ such that, for any $\lambda
>\lambda_M$ and  $\eta\in\{-M,...,M\}$ 
\label{lemma2}
  \begin{align}
&    
   \lim_{n\to\infty} \Big\{ \gamma n^2\; {\mc M}_n(\lambda,\eta) \Big\}= 1, 
 \label{eq:1st} \\
    \label{eq:2nd}
  &   \lim_{n\to\infty} \Big\{n^2\; \big(1 - \gamma n^2\; {\mc M}_n(\lambda,\eta) \big)\Big\}  
    = \frac 1{2\gamma} \Big(\lambda + \frac{\eta^2\pi^2}{\gamma}\Big), \vphantom{\Bigg\{} 
  \end{align}
and
  \begin{equation}
    \label{eq:4th}
    \lim_{n\to\infty}\Big\{ \gamma n^2\; \vphantom{\bigg(}\widetilde{z}_n^{(0)} (\lambda,\eta) \Big\}
 =  \big(\mathcal Fe_{\rm{thm}}(0,\cdot)\big)(\eta).
  \end{equation}
\end{lemma}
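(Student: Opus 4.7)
The plan is to invert the $4\times 4$ matrix $\mb M_n(\lambda,\eta,k)$ asymptotically. Writing $\mb M_n = n^2\mb N_n$ and Taylor-expanding $a_n/n^2$, $b_n/n^2$, $\gamma_n^\pm$ in powers of $1/n$ yields
\[
\mb N_n = \mb N_\infty(k) + \tfrac{1}{n}\mb N^{(1)}(\eta,k) + \tfrac{1}{n^2}\mb N^{(2)}(\lambda,\eta,k) + O(n^{-3}),
\]
where $\mb N_\infty$ is the limit matrix (with $\tilde a_n\to 2\gamma$, $\tilde b_n\to 2\gamma+4i\sin^2(\pi k)$, $\gamma_n^\pm\to\gamma^\pm$). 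Crucially, $\mb N_\infty(k)$ degenerates at $k=0$ with null vector $\mb 1=(1,1,1,1)^{\rm T}$, but ${\rm e}\cdot\mb 1 = 0$, so ${\rm e}$ lies in the range of $\mb N_\infty$ everywhere. The linchpin is the explicit resolution of $\mb N_\infty v = {\rm e}$ and $\mb N_\infty^{\rm T} u = {\rm e}$: introducing the auxiliary unknowns $v_1\pm v_4$ and $v_2\pm v_3$ and exploiting the trigonometric identity $\sin^2(2\pi k) + (2\sin^2\pi k)^2 = 4\sin^2(\pi k)$, one finds the $k$-independent solution $v_\infty = u_\infty = \frac{1}{2\gamma}(1,0,0,1)^{\rm T}$. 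Since ${\rm e}^{\rm T} v_\infty = 1/\gamma$, this immediately gives \eqref{eq:1st}. For \eqref{eq:4th}, $n^2\widetilde z_n^{(0)} = [u_\infty\cdot\widetilde{\mf v}_n^0]_n + o(1) = \frac{1}{2\gamma}\big([\widetilde W_n^+]_n + [\widetilde W_n^-]_n\big) + o(1)$, and \eqref{eq:convergence_tilde} combined with the identity $[\widetilde W_n^+]_n = [\widetilde W_n^-]_n$ (following from \eqref{eq:sumequal}) yields $\gamma n^2\widetilde z_n^{(0)} \to \mc F e_{\rm thm}(0,\cdot)(\eta)$.

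For the sharper limit \eqref{eq:2nd}, one needs the $O(n^{-2})$ correction in the resolvent expansion
\[
{\rm e}^{\rm T}\mb N_n^{-1}{\rm e} = \tfrac{1}{\gamma} - \tfrac{1}{n}u_\infty^{\rm T}\mb N^{(1)} v_\infty + \tfrac{1}{n^2}\big[u_\infty^{\rm T}\mb N^{(1)}\mb N_\infty^{-1}\mb N^{(1)} v_\infty - u_\infty^{\rm T}\mb N^{(2)} v_\infty\big] + O(n^{-3}).
\]
Explicit computation from $\mb N^{(1)}$ (whose non-zero entries are $\pm 2\pi i\eta\sin(2\pi k)$ on the diagonal and $\pm 2\pi\eta\cos(2\pi k)$ off-diagonal) shows $u_\infty^{\rm T}\mb N^{(1)} v_\infty \equiv 0$ pointwise in $k$, so the $O(1/n)$ term vanishes. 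At next order, one solves $\mb N_\infty w = \mb N^{(1)} v_\infty$ explicitly (the RHS being again orthogonal to $\mb 1$, so the procedure extends to $k=0$ modulo the kernel), and pairing with $u_\infty$ gives
\[
u_\infty^{\rm T}\mb N^{(1)}\mb N_\infty^{-1}\mb N^{(1)} v_\infty = -\tfrac{\pi^2\eta^2(1+\cos 2\pi k)}{2\gamma^3}, \qquad u_\infty^{\rm T}\mb N^{(2)} v_\infty = \tfrac{\lambda}{2\gamma^2}.
\]
Since $[\cos 2\pi k]_n = 0$ exactly on $\widehat\T_n$ for $n\geqslant 2$, averaging yields $[{\rm e}^{\rm T}\mb N_n^{-1}{\rm e}]_n = \tfrac{1}{\gamma} - \tfrac{1}{2\gamma^2 n^2}(\lambda + \tfrac{\pi^2\eta^2}{\gamma}) + O(n^{-3})$, whence \eqref{eq:2nd} follows by rearrangement.

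The main obstacle is to justify these formal expansions uniformly in $k\in\widehat\T_n$, and in particular near $k=0$, where $\mb N_\infty$ is singular and $\mb N_\infty^{-1}$ strictly speaking does not exist. The saving grace is the orthogonality ${\rm e}\perp\mb 1$: at every step the ``bad'' direction along the near-null mode of $\mb N_n$ is projected out of the specific pairings ${\rm e}^{\rm T}\mb N_n^{-1}(\cdot)$ of interest, so the scalar quantity ${\rm e}^{\rm T}\mb N_n^{-1}{\rm e}$ remains $O(1)$ uniformly in $k$ and $n$. The upshot is that ${\rm e}^{\rm T}\mb N_n^{-1}{\rm e}$ is really a rational function of $k$ which is regular at $k=0$ even though the individual terms $\mb N_\infty^{-1}$ in its expansion are not. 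For the $\widetilde z_n^{(0)}$ remainder, the same orthogonality combined with the $\ell^2$-control on $\widetilde{\mf v}_n^0$ provided by Assumption \ref{ass2} handles the uniformity.
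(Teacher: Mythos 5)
Your route for \eqref{eq:1st}--\eqref{eq:2nd} is genuinely different from the paper's: instead of computing $\det \mb M_n$ and the cofactor combination explicitly and exhibiting the summand of $n^2(1-\gamma n^2\mc M_n)$ as an explicit rational function of $k$ with denominator bounded below by $4\lambda\gamma$ (Lemma \ref{lem:estimateS} and formulas \eqref{031203}, \eqref{022308}, \eqref{041203}), you run a Neumann expansion of $\mb N_n^{-1}=(\mb M_n/n^2)^{-1}$ around the limit matrix $\mb N_\infty(k)$. The algebra you report is correct: $\mb N_\infty v_\infty={\rm e}$ and $\mb N_\infty^{\rm T}u_\infty={\rm e}$ with $v_\infty=u_\infty=\frac{1}{2\gamma}(1,0,0,1)^{\rm T}$ independently of $k$; $u_\infty^{\rm T}\mb N^{(1)}v_\infty=0$ because the only nonzero relevant entries are the conjugate pair $\pm 2\pi i\eta\sin(2\pi k)$ at positions $(1,1)$ and $(4,4)$; $u_\infty^{\rm T}\mb N^{(2)}v_\infty=\lambda/(2\gamma^2)$; and your second-order coefficient reproduces the pointwise limit $\frac{\lambda}{2\gamma}+\frac{\pi^2\eta^2(1+\cos 2\pi k)}{2\gamma^2}$ whose $k$-average is the right-hand side of \eqref{eq:2nd}, in exact agreement with the integral the paper evaluates. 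Your treatment of \eqref{eq:4th} is essentially the paper's (splitting over $\widehat\T_{n,\rho}$ and its complement, Cauchy--Schwarz with Assumption \ref{ass2}).

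The gap is the uniformity near $k=0$, which you correctly flag as the main obstacle but do not resolve. For \eqref{eq:2nd} you need the \emph{second-order} quantity $n^2\bigl(1-\gamma\,{\rm e}^{\rm T}\mb N_n^{-1}(k)\,{\rm e}\bigr)$ to be bounded uniformly over all $k\in\widehat\T_n$, including $k=\xi/n$ with $|\xi|\le n^{\rho}$; otherwise these $O(n^{\rho})$ modes can contribute a divergent amount to the Ces\`aro average $\tfrac1n\sum_k$. Your orthogonality argument only addresses the weaker statement that ${\rm e}^{\rm T}\mb N_n^{-1}{\rm e}$ is $O(1)$: it kills the $O(n^2)$ rank-one singular part of $\mb N_n^{-1}$ to leading order, but the near-null direction of $\mb N_n$ at $k=\xi/n$ is $\mb 1+O(\xi/n)$ rather than exactly $\mb 1$, and the residual pairing with ${\rm e}$ must be quantified. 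A Neumann series around $\mb N_\infty$ cannot deliver this, since its error terms are controlled by powers of $\|\mb N_\infty(k)^{-1}\|\sim(\lambda/n^2+k^2)^{-1}$, which is exactly what blows up in the regime at issue. Note also that at $k=\xi/n$ the pointwise limit of $n^2(1-\gamma\,{\rm e}^{\rm T}\mb N_n^{-1}{\rm e})$ is \emph{not} the $k\to 0$ value of your generic limit function (the $\lambda$-term becomes $\frac{\lambda}{2\gamma}\cdot\frac{8\pi^2[\xi^2+(\xi+\eta)^2]+2\lambda\gamma}{8\pi^2[\xi^2+(\xi+\eta)^2]+4\lambda\gamma}$), so it is uniform boundedness, not convergence, that saves these modes --- and that boundedness is precisely what the paper extracts from the explicit rational expressions ${\rm I}_n,{\rm II}_n$, whose numerators and denominators degenerate at the same rate as $k\to 0$. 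The same uniform bound on $n^2\,{\rm e}^{\rm T}\mb M_n^{-1}(k)$ over $\widehat\T_{n,\rho}^c$ is also used silently in your Cauchy--Schwarz estimate for the $\widetilde z_n^{(0)}$ remainder. To close your argument you would need to supplement the perturbative expansion with a separate, explicit computation (or uniform two-scale estimate) in the regime $k=O(n^{\rho-1})$.
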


As a direct consequence of the above lemma and \eqref{eq:nonlin1}, we  obtain:
\begin{corollary}\label{cor:conv} Fix $M\in\N$. There exists $\lambda_M>0$ such that, for any $\lambda
>\lambda_M$,  $\eta\in\{-M,...,M\}$
\begin{equation*}
   \lim_{n\to\infty} \widetilde{\mc I}_n (\lambda,\eta) =2\mc W_{\rm thm}^+(\lambda,\eta),
\end{equation*}
where 
\begin{equation*}\mc W_{\rm thm}^+(\lambda,\eta)
:=\Big(\la+\frac{\eta^2\pi^2}{\ga}\Big)^{-1}\bigg\{\vphantom{\int_0^1}\big({\cal
  F}e_{\rm thm}(0,\cdot)\big)(\eta)+\frac{1}{2\gamma} \mathcal L
 \Big({\cal F} \left((\partial_u
  r)^2\right)(\eta)\Big)(\lambda)\bigg\}.\end{equation*}
\end{corollary}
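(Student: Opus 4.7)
The plan is to combine the explicit formula \eqref{eq:ictilde} for $\widetilde{\mc I}_n(\lambda,\eta)$ with the asymptotic statements collected in Proposition \ref{lem:mech} and Lemma \ref{lemma2}. The strategy is essentially algebraic: rewrite the right-hand side of \eqref{eq:ictilde} so that every factor has an explicit limit, then take the termwise limit and simplify.

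Concretely, I would first multiply numerator and denominator of \eqref{eq:ictilde} by $\gamma$ to put it in the form
\begin{equation*}
\widetilde{\mc I}_n(\lambda,\eta)
=\frac{\gamma n^2\,\widetilde{z}_n^{(0)}(\lambda,\eta)
+\bigl(\gamma n^2\,\overline{\mc I}_n(\lambda,\eta)\bigr)\bigl(\gamma n^2\,\mc M_n(\lambda,\eta)\bigr)}
{\gamma\, n^2\bigl(1-\gamma n^2\,\mc M_n(\lambda,\eta)\bigr)}.
\end{equation*}
Each building block now has an explicit limit for $\lambda>\lambda_M$ and $|\eta|\le M$: by \eqref{eq:4th} the first term of the numerator converges to $(\mc F e_{\rm thm}(0,\cdot))(\eta)$; by \eqref{eq:nonlin1} the prefactor $\gamma n^2\,\overline{\mc I}_n(\lambda,\eta)$ tends to $\tfrac{1}{2\gamma}\mc L\bigl(\mc F((\partial_u r)^2)(\eta)\bigr)(\lambda)$; by \eqref{eq:1st} the second factor $\gamma n^2\,\mc M_n(\lambda,\eta)$ tends to $1$; and by \eqref{eq:2nd} the denominator $\gamma\,n^2\bigl(1-\gamma n^2\,\mc M_n(\lambda,\eta)\bigr)$ tends to $\tfrac{1}{2}\bigl(\lambda+\eta^2\pi^2/\gamma\bigr)$.

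Assembling these four limits, and noting that for $\lambda>\lambda_M$ the limit of the denominator is strictly positive so that the ratio passes to the limit by continuity, one obtains
\begin{equation*}
\lim_{n\to\infty}\widetilde{\mc I}_n(\lambda,\eta)
=\frac{(\mc F e_{\rm thm}(0,\cdot))(\eta)+\tfrac{1}{2\gamma}\mc L\bigl(\mc F((\partial_u r)^2)(\eta)\bigr)(\lambda)}{\tfrac{1}{2}\bigl(\lambda+\eta^2\pi^2/\gamma\bigr)}
=2\,\mc W_{\rm thm}^+(\lambda,\eta),
\end{equation*}
which is the claimed identity. The corollary itself carries no genuine obstacle beyond bookkeeping of the constant $\gamma$ and the factor of $2$; the real difficulty lies upstream, in establishing the sharp cancellation \eqref{eq:2nd} (where $\gamma n^2\,\mc M_n$ matches $1$ to leading order and one must identify precisely the $O(n^{-2})$ correction) together with the local equilibrium statement \eqref{eq:4th} governing the initial-data term $\widetilde{z}_n^{(0)}$.
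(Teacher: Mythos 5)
Your proposal is correct and follows exactly the route the paper intends: the corollary is stated there as a direct consequence of \eqref{eq:ictilde} combined with the limits \eqref{eq:1st}, \eqref{eq:2nd}, \eqref{eq:4th} of Lemma \ref{lemma2} and the limit \eqref{eq:nonlin1} of Proposition \ref{lem:mech}, and your bookkeeping of the factors of $\gamma$ and the final factor of $2$ checks out. You also correctly identify that the denominator's limit $\tfrac12(\lambda+\eta^2\pi^2/\gamma)$ is strictly positive, which justifies passing to the limit in the quotient, and that the substantive work lies in the cited lemmas rather than in the corollary itself.
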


The following lemma finalizes the identification of the limit for the Fourier transform of the thermal energy:
\begin{proposition}\label{lemma3}
  Fix $M\in\N$. There exists $\lambda_M>0$ such that, for any $\lambda
>\lambda_M$,  $\eta\in\{-M,...,M\}$ \begin{equation}
    \label{eq:3}
    \lim_{n\to\infty} \left\{\widetilde{\mc I}_n (\lambda,\eta) - 2 \; \big[\widetilde{ w}^+_n(\lambda,\eta,\cdot)\big]_n\right\} = 0.
  \end{equation}
\end{proposition}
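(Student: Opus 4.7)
By combining \eqref{eq:sumequal} with the definition of $\widetilde{\mc I}_n$, one has
\[
\widetilde{\mc I}_n(\lambda,\eta) = 2\bigl[\widetilde w_n^+(\lambda,\eta,\cdot)\bigr]_n - T_n^Y(\lambda,\eta),
\]
where $T_n^Y(\lambda,\eta):=\bigl[\widetilde y_n^+(\lambda,\eta,\cdot)\bigr]_n+\bigl[\widetilde y_n^-(\lambda,\eta,\cdot)\bigr]_n$. Hence \eqref{eq:3} is equivalent to showing $T_n^Y(\lambda,\eta)\to 0$ as $n\to\infty$ for each fixed $\lambda>\lambda_M$ and $|\eta|\leq M$.

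To derive a closed equation for $T_n^Y$, I would Laplace-transform the second and third equations of \eqref{eq:closed-syst} applied to the centered Wigner functions $\widetilde W_n^\pm,\widetilde Y_n^\pm$, process the $\bb L$-term and the nonlocal Poissonian term using $[\widetilde w_n^+]_n=[\widetilde w_n^-]_n$, and rewrite the right-hand sides in terms of $\widetilde{\mc I}_n$. This produces
\begin{align*}
b_n\widetilde y_n^+ &= \widetilde Y_n^+(0) + n^2\gamma^+\widetilde w_n^+ + n^2\gamma_n^-\widetilde w_n^- - \gamma n^2\widetilde{\mc I}_n,\\
b_n^\star\widetilde y_n^- &= \widetilde Y_n^-(0) + n^2\gamma_n^+\widetilde w_n^+ + n^2\gamma^-\widetilde w_n^- - \gamma n^2\widetilde{\mc I}_n,
\end{align*}
with $b_n,\gamma^\pm,\gamma_n^\pm$ as in \eqref{a-n}. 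Dividing these equations by $b_n$ and $b_n^\star$, averaging in $k$, summing, and substituting $\widetilde{\mc I}_n = 2[\widetilde w_n^+]_n - T_n^Y$ yields a scalar closed identity
\[
T_n^Y\bigl(1 - \gamma n^2[\phi_n]_n\bigr) = \mathcal R_n^{\mathrm{init}} + \mathcal R_n^w,\qquad \phi_n := \tfrac{1}{b_n}+\tfrac{1}{b_n^\star}.
\]

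Next, I would check that the prefactor stays bounded away from zero. Using $b_n = 2\gamma n^2 + in^2\sigma_n s + \lambda$ and $\sigma_n s(\eta,k)\to 4\sin^2(\pi k)$ as $n\to\infty$, a direct computation gives
\[
\lim_{n\to\infty}\bigl(1-\gamma n^2[\phi_n]_n\bigr) = \int_{\T}\frac{16\sin^4(\pi k)}{4\gamma^2+16\sin^4(\pi k)}\,\dd k > 0,
\]
uniformly in $\eta\in\{-M,\dots,M\}$. For $\mathcal R_n^{\mathrm{init}} = [\widetilde Y_n^+(0)/b_n]_n + [\widetilde Y_n^-(0)/b_n^\star]_n$, Cauchy--Schwarz in $k$ combined with $|b_n|\geq 2\gamma n^2$ and the uniform bound \eqref{071407} on $[|\widetilde Y_n^\pm(0,\eta,\cdot)|^2]_n$ gives $|\mathcal R_n^{\mathrm{init}}| = O(n^{-2})\to 0$.

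The heart of the proof is to show $\mathcal R_n^w\to 0$. After expanding $\gamma^\pm=\gamma\pm\sin(2\pi k)$, $\gamma_n^\pm=\gamma\pm\sin(2\pi(k+\eta/n))$ and regrouping, $\mathcal R_n^w$ reduces to a sum of a covariance-type piece
$\gamma n^2\bigl([\phi_n(\widetilde w_n^++\widetilde w_n^-)]_n - [\phi_n]_n[\widetilde w_n^++\widetilde w_n^-]_n\bigr)$
and sine-weighted averages of $\widetilde w_n^\pm/b_n$ and $\widetilde w_n^\pm/b_n^\star$. Since $\gamma n^2\phi_n$ and $n^2/b_n$ are of order one as functions of $k$ (not small), a naive Cauchy--Schwarz gives only $\mathcal R_n^w = O(1)$. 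Vanishing requires a local-equilibrium-in-$L^2$ estimate, namely
$\bigl[\,\bigl|\widetilde w_n^+(\lambda,\eta,\cdot) - [\widetilde w_n^+(\lambda,\eta,\cdot)]_n\bigr|^2\,\bigr]_n \to 0$
for $\lambda>\lambda_M$. Such an estimate can be derived by propagating the initial spectral bound \eqref{eq:uniform-int1} through \eqref{eq:closed-syst}, exploiting the strongly dissipative real part $2\gamma n^2$ of $a_n$ together with the mixing action of $\bb L$, which drives $\widetilde W_n^\pm$ toward $k$-flat configurations on the fast time scale $(2\gamma n^2)^{-1}$. Establishing this $L^2$-local-equilibrium bound is the principal technical obstacle; once available, the sine-weighted pieces are handled by the same Cauchy--Schwarz argument using the $O(n^{-2})$ size of $1/b_n$ and $1/b_n^\star$, completing the proof that $T_n^Y\to 0$.
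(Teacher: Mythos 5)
Your reduction of \eqref{eq:3} to $T_n^Y:=[\widetilde y_n^+]_n+[\widetilde y_n^-]_n\to 0$ is correct, and the architecture of your argument (closed scalar identity, non-degenerate prefactor $1-\gamma n^2[\phi_n]_n$, $O(n^{-2})$ initial term) is sound. But the proof has a genuine gap at exactly the point you flag: the $L^2$ local-equilibrium estimate $\big[\,|\widetilde w_n^+(\lambda,\eta,\cdot)-[\widetilde w_n^+(\lambda,\eta,\cdot)]_n|^2\,\big]_n\to 0$, without which $\mc R_n^w$ is only $O(1)$, is asserted rather than proved. The heuristic you offer (dissipation from ${\rm Re}\,a_n=2\gamma n^2+\lambda$ plus the ``mixing action'' of $\bb L$) is not an argument: whether the stationary balance in the Laplace variable actually flattens $\widetilde w_n^+$ in $k$ depends on the full coupled $4\times4$ system, and the only available control on the $k$-profile of $\widetilde w_n^+$ comes from inverting ${\mb M}_n$ explicitly. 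A second, more minor, slip: Laplace-transforming the tilde system yields \eqref{eq:term-laplace}, whose right-hand side carries $\gamma n^2\,{\mc I}_n\,{\rm e}$ with the \emph{full} ${\mc I}_n=\overline{\mc I}_n+\widetilde{\mc I}_n$ (the mean system \eqref{eq:closed-overline} has no nonlocal term, so the entire $[\cdot]_n$-average lands in the fluctuating equation); your displayed equations have $\widetilde{\mc I}_n$ in its place. The omitted contribution $\gamma n^2\,\overline{\mc I}_n\,[\phi_n]_n$ does vanish, since by \eqref{eq:nonlin1} it equals $(\gamma n^2\overline{\mc I}_n)\cdot O(n^{-2})$, but it must be carried along.

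The paper's proof avoids the local-equilibrium estimate altogether. From \eqref{eq:inverse-s2}, $\widetilde{\mf w}_n={\mb M}_n^{-1}\widetilde{\mf v}_n^0+\gamma n^2{\mc I}_n\,{\mb M}_n^{-1}{\rm e}$ expresses $\widetilde{\mf w}_n$ explicitly through the initial data and the $k$-independent scalar ${\mc I}_n$, so that
$\widetilde{\mc I}_n-2[\widetilde w_n^+]_n=\big[({\rm e}-2{\rm e}_1)\cdot({\mb M}_n^{-1}\widetilde{\mf v}_n^0)\big]_n+\gamma n^2{\mc I}_n\big[({\rm e}-2{\rm e}_1)\cdot({\mb M}_n^{-1}{\rm e})\big]_n$,
and both terms vanish because $n^2{\rm e}^{\rm T}{\mb M}_n^{-1}$ and $2n^2{\rm e}_1^{\rm T}{\mb M}_n^{-1}$ share the limit $\frac1{2\gamma}[1,0,0,1]$ (see \eqref{eq:macrolimpr} and the limit $n^2{\rm e}_1\cdot{\mb M}_n^{-1}{\rm e}\to\frac1{2\gamma}$ quoted after the proposition); the exchange of limit and $[\cdot]_n$ is then handled exactly as in the proof of \eqref{eq:4th}, splitting over $\widehat\T_{n,\rho}$ and its complement and using Cauchy--Schwarz with \eqref{071407}. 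Note that the most direct route to your missing $L^2$ estimate is precisely this explicit inversion, so the detour through the $Y^\pm$ equations does not save any work and leaves the hardest step open.
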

The proofs of Lemma \ref{lemma2} and Proposition \ref{lemma3} go very much along the lines of  the arguments presented in Section \ref{app:proof} and we will not present the details here.
They are basically  consequences of the following limit
\begin{equation*}
  \lim_{n\to\infty} \big\{n^2 \; {\rm e_1} \cdot {\bf M}_n^{-1} (\lambda, \eta,k)\; {\rm e}\big\} = \frac 1{2\gamma},
\end{equation*}
which is proved in Section \ref{ssec:asymp}.

\subsection{Asymptotics of  $\widetilde{\mf w}_n$ and $\mf w_n$}
\label{sec:asym}

With a little more work one can prove the following \emph{local equilibrium} result, which is an easy consequence of Proposition \ref{lem:mech}, Corollary \ref{cor:conv} and Proposition \ref{lemma3} (recall also \eqref{eq:inverse-s2}). 

\begin{theorem}\label{theo:limitW}
Fix $M\in\bb N$.  There exists $\lambda_M>0$ such that, for any $\lambda
>\lambda_M$   and $G\in {\cal P}_M$ we have 
\begin{multline}
\label{101407}
\lim_{n\to+\infty}\sum_{\eta\in\bb Z}
\Big[w_n^+(\lambda, \eta,\cdot) ({\cal F}G)^\star(\eta,\cdot)\Big]_n \\=\sum_{\eta\in\bb Z}\bigg\{\mc W_{\rm thm}^+(\lambda,\eta)\; \int_\T  ({\cal F}G)^\star(\eta,v)\; \dd v+\mc
W_{\rm mech}^+(\lambda,\eta)\;  ({\cal F}G)^\star(\eta,0)\bigg\}
\end{multline}
and 
\begin{equation}
\label{111407}
\lim_{n\to+\infty}\sum_{\eta\in\bb Z}
\Big[y_n^+(\lambda, \eta,\cdot) ({\cal F}G)^\star(\eta,
\cdot) \Big]_n =\sum_{\eta\in\bb Z} \mc
W_{\rm mech}^+(\lambda,\eta)\;  ({\cal F}G)^\star(\eta,0),
\end{equation}
\end{theorem}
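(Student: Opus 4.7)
The plan is to combine the decomposition $\mf w_n = \overline{\mf w}_n + \widetilde{\mf w}_n$ from \eqref{eq:dec-def} with the explicit resolutions \eqref{eq:inverse-s1}--\eqref{eq:inverse-s2}, and then invoke the asymptotic results already established: Proposition \ref{lem:mech} for the mechanical contribution, Corollary \ref{cor:conv} and Proposition \ref{lemma3} for the thermal part, and the uniform-in-$k$ matrix-inverse limit announced at the end of Section \ref{sec:clos-therm-energy} (to be proved in Section \ref{ssec:asymp}). Both identities \eqref{101407} and \eqref{111407} will then reduce to isolating the mechanical contribution and treating the thermal one by a local equilibrium argument: the fluctuating Wigner function $\widetilde w_n^+(\lambda,\eta,k)$ should become asymptotically independent of $k$, with limit $\mc W_{\rm thm}^+(\lambda,\eta)$, whereas the thermal part of $\widetilde y_n^+(\lambda,\eta,k)$ should simply disappear in the limit.

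For the mechanical part, both the first and the second components of $\overline{\mf w}_n = \overline W_n^+\,\mb M_n^{-1}\mathbf 1$ converge, by Proposition \ref{lem:mech}, to $\sum_\eta \mc W_{\rm mech}^+(\lambda,\eta)(\mc F G)^\star(\eta,0)$ when tested against $(\mc F G)^\star(\eta,\cdot)$ and $k$-averaged. This furnishes the $\mc W_{\rm mech}^+$ term in \emph{both} \eqref{101407} and \eqref{111407} in one step.

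For the thermal part of $w_n^+$, the first component of \eqref{eq:inverse-s2} reads
\begin{equation*}
\widetilde w_n^+(\lambda,\eta,k) \;=\; \bigl(\mb M_n^{-1}\widetilde{\mf v}_n^0\bigr)_1 \;+\; \gamma n^2\, \mc I_n(\lambda,\eta)\, \bigl(\mb M_n^{-1}\mathrm{e}\bigr)_1 .
\end{equation*}
The uniform-in-$k$ limit $n^2\,\bigl(\mb M_n^{-1}\mathrm{e}\bigr)_1 \to \tfrac{1}{2\gamma}$, combined with the split $\mc I_n = \overline{\mc I}_n + \widetilde{\mc I}_n$, where $\overline{\mc I}_n\to 0$ by \eqref{eq:nonlin1} and $\widetilde{\mc I}_n \to 2\mc W_{\rm thm}^+(\lambda,\eta)$ by Corollary \ref{cor:conv} (which in turn rests on Proposition \ref{lemma3}), shows that the second summand converges to $\mc W_{\rm thm}^+(\lambda,\eta)$ uniformly in $k\in\widehat\T_n$ for $|\eta|\le M$. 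For the first summand, I expect the generic scaling $\mb M_n^{-1} = O(n^{-2})$ together with the $n$-uniform bound on $\widetilde{\mf v}_n^0$ coming from \eqref{071407} (and ultimately Assumption \ref{ass2}) to yield $\bigl(\mb M_n^{-1}\widetilde{\mf v}_n^0\bigr)_1 \to 0$ uniformly in $k$. Consequently $\widetilde w_n^+(\lambda,\eta,k) \to \mc W_{\rm thm}^+(\lambda,\eta)$ uniformly in $k$, and since $\bigl[(\mc F G)^\star(\eta,\cdot)\bigr]_n \to \int_\T (\mc F G)^\star(\eta,v)\,\dd v$ by Riemann-sum convergence for the smooth second variable of $G$, the thermal term in \eqref{101407} follows after summing over the finitely many $|\eta|\le M$ for which $(\mc F G)(\eta,\cdot)\neq 0$ when $G\in\mc P_M$.

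The proof of \eqref{111407} is entirely analogous: the only substantive change is that the relevant matrix-inverse limit is now $n^2\,\bigl(\mb M_n^{-1}\mathrm{e}\bigr)_2 \to 0$, so that the thermal contribution to $y_n^+$ vanishes and only the mechanical term survives. The main technical obstacle, already flagged in Section \ref{sec:clos-therm-energy}, is precisely this uniform-in-$k$ asymptotic analysis of the four-by-four block matrix $\mb M_n$ from \eqref{m-n}--\eqref{a-n}: one must identify the precise leading behavior of the relevant entries of $\mb M_n^{-1}$ through delicate cancellations between its $n^2$-scale diagonal and lower-order off-diagonal entries, and at the same time verify that the error terms $\bigl(\mb M_n^{-1}\widetilde{\mf v}_n^0\bigr)_j$ genuinely vanish after averaging, which is where Assumption \ref{ass2} enters decisively via \eqref{071407}.
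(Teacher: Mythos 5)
Your proposal follows exactly the route the paper itself indicates for this theorem: decompose via \eqref{eq:dec-def}, insert the explicit resolutions \eqref{eq:inverse-s1}--\eqref{eq:inverse-s2}, and combine Proposition \ref{lem:mech}, Corollary \ref{cor:conv}, Proposition \ref{lemma3} and the asymptotics of ${\bf M}_n^{-1}$ from Section \ref{ssec:asymp}, which is precisely the (sketched) argument of the paper. The only caveat is that the convergences you describe as ``uniform in $k$'' actually fail near $k=0$, where ${\bf M}_n^{-1}$ is $O(1)$ rather than $O(n^{-2})$ (see \eqref{eq:macrolim}); this is harmless, as you yourself note at the end, because only the $k$-average matters and the contribution of the low modes is controlled by Cauchy--Schwarz together with \eqref{071407}, exactly as in the $K_n^{(2)}$ step of the proof of Lemma \ref{lemma2}.
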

{We will not give the details for the proof of this last theorem, since the argument is very similar to Proposition \ref{lem:mech}.} 

\subsection{End of the proof of Theorem \ref{theo:hydro1}} \label{sec:end}

The proof of convergence \eqref{eq:convergence} has been reduced to the investigation of the Wigner distributions.
Recall that from the uniform bound \eqref{eq:bound}, we know that the sequence of all
Wigner distributions $\{\mb W_{n}^+(\cdot), \mb Y_{n}^+(\cdot), \mb Y_{n}^-(\cdot), \mb W_{n}^-(\cdot)\}_{n}$ is
sequentially pre-compact with respect to the  $\star$-weak
topology  in the dual space of ${\bf L}^1(\R_+,\mc A_0)$. More precisely, one can choose a subsequence $n_m$ such that 
any of the components above, say  for instance $\mb W_{n_m}^+(\cdot)$, $\star$-weakly converges
 in the dual space of ${\bf L}^1(\R_+,\mc A_0)$ to some  $\mb W^+(\cdot)$. 

To characterize its limit, we  consider
$\mb w_{n_m}^+(\lambda)$
obtained by taking the Laplace transforms of the respective $\mb W_{n_m}^+(\cdot)$. 
For any $\la>0$, it converges  $\star$-weakly, as $n_m\to+\infty$,  
in ${\cal A}_0'$ to some $\mb w^+(\lambda)$ that is the Laplace transform of $\mb W^+(\cdot)$.
The latter is defined as 
$$
\langle \mb w^+(\lambda),G\rangle:=  
\int_0^\infty \langle\mb W^+(t), e^{-\la t} G\rangle\; \dd t  \qquad \la>0, \; G\in {\cal A}_0.
$$
Given a trigonometric polynomial $G\in {\cal C}^\infty(\T\times\T)$ we conclude, thanks to Theorem \ref{theo:limitW}, that for any $\la>\la_M,$
\begin{equation}
\label{012303}
\left\langle\mb  w^+(\lambda),G\right\rangle=\int_{\R_+\times\T^2}e^{-\la t}\; e(t,u) G(u,v)\; \dd t\; \dd u\; \dd v,
\end{equation}
where $e(t,u)$ is defined as in Theorem \ref{theo:hydro1} and $M\in\bb
N$ is such that ${\cal F}G(\eta,v)\equiv 0$ for all $|\eta|>M$. 

Due to the uniqueness of the Laplace transform (that can be argued by analytic continuation), this proves that in fact
 equality \eqref{012303} holds for all $\la>0$. By a density argument it can be then extended to all $G\in{\cal A}_0$ and shows that
$\mb W^+(t,u,v)=e(t,u)$, for any  $(t,u,v)\in \R_+\times\T^2$. This ends the proof of \eqref{eq:convergence}, and thus Theorem \ref{theo:hydro1}.

\section{Proofs of the technical results stated in Section \ref{sec:proof}}
\label{app:proof}

In what follows we shall adopt the following notation: 
we say that the sequence $C_n(\la,\eta,k) \preceq 1$  if  for any given integer $M \in \mathbb N$, 
there exist $\lambda_M>0$ and $n_M\in\N$ such that
 $$
\sup\Big\{ C_n(\lambda,\eta,k)\; ; \,\lambda >
  \lambda_M, \,\eta \in \{-M,...,M\},\,n>n_M , \,k\in\widehat{\T}_n\Big\}<+\infty.
$$

\subsection{Invertibility of $\mb M_n(\lambda,\eta,k)$}
\label{app:inversion}

\begin{proposition}
The matrix $\mb M_n(\lambda,\eta,k)$ defined in \eqref{m-n} is invertible for all
$n\ge 1$, $\la>0$ and $(\eta,k)\in\Z\times \widehat{\T}_n$.
\end{proposition}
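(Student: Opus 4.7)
The plan is to prove invertibility by showing that $\mb M_n(\lambda,\eta,k)$ is strictly accretive on $\mathbb{C}^4$ whenever $\la>0$: concretely, I would check that its Hermitian symmetrisation $\mb M_n+\mb M_n^*$ (with $\mb M_n^*$ the conjugate transpose) is bounded below by $2\la\,{\rm Id}_4$. Once this is in hand, the inequality $\mathrm{Re}\,\langle\mf w,\mb M_n\mf w\rangle\ge\la\|\mf w\|^2$ forces $\mb M_n\mf w=0$ to imply $\mf w=0$, whence invertibility in finite dimension. Conceptually, this merely reflects the underlying structure of \eqref{eq:closed-syst}: the Liouville part of the generator is anti-Hermitian, while the Poissonian flip noise contributes a real, non-negative dissipative piece.

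Step one is to read off $\mb M_n+\mb M_n^*$ directly from \eqref{m-n}. On the diagonal I would use $a_n+a_n^\star=b_n+b_n^\star=2\la+4\ga n^2$, since $in(\delta_n s)$ and $in^2(\sigma_n s)$ are pure imaginary and therefore cancel. The off-diagonal entries of $\mb M_n$ are all real (both $\ga^\pm$ and $\ga_n^\pm$ are real), so symmetrisation replaces each pair $-n^2\ga^-,\,-n^2\ga^+$ (resp. $-n^2\ga_n^-,\,-n^2\ga_n^+$) by $-n^2(\ga^++\ga^-)=-2n^2\ga$ (resp. $-2n^2\ga$), thanks to the key algebraic identity $\ga^++\ga^-=\ga_n^++\ga_n^-=2\ga$. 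Assembling the four $2\times 2$ blocks yields the factorisation
\[
\mb M_n+\mb M_n^*=2\la\,{\rm Id}_4+2n^2\ga\,\tilde R,\qquad \tilde R:=\begin{bmatrix} 2 & -1 & -1 & 0 \\ -1 & 2 & 0 & -1 \\ -1 & 0 & 2 & -1 \\ 0 & -1 & -1 & 2 \end{bmatrix},
\]
with $\tilde R$ independent of $\la,\eta,k,n$.

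Step two is to verify $\tilde R\succeq 0$. Writing $\tilde R=\bigl(\begin{smallmatrix}X&Y\\Y&X\end{smallmatrix}\bigr)$ with commuting blocks $X=\bigl(\begin{smallmatrix}2&-1\\-1&2\end{smallmatrix}\bigr)$ and $Y=-{\rm Id}_2$, the spectrum of $\tilde R$ is $\mathrm{spec}(X-Y)\cup\mathrm{spec}(X+Y)=\{2,4\}\cup\{0,2\}\subset[0,+\infty)$. Hence $\mb M_n+\mb M_n^*\ge 2\la\,{\rm Id}_4$ for every $\la>0$, which yields invertibility.

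There is no genuine obstacle in this argument; the only point requiring care is the bookkeeping of signs and the asymmetry between $\ga^\pm$ and $\ga_n^\pm$ within the off-diagonal blocks of $\mb M_n$. It is worth noting that the kernel of $\tilde R$ is spanned by $\mathbf{1}=[1,1,1,1]^{\rm T}$, consistently with the fact that the flip noise conserves total energy (compare with \eqref{eq:mech-laplace}); this degeneracy of $\tilde R$ is exactly why strict positivity of the Hermitian part relies on the $\la$ contribution.
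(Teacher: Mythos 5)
Your proof is correct, and it takes a genuinely different route from the paper. The paper exploits the commutation relation $[A_n,B_n]=0$ to apply the block-determinant formula for matrices with commuting blocks, computes $\det(\mb M_n)$ explicitly as in \eqref{031203}, and reads off strict positivity from the term $(\lambda+2\gamma n^2)^2\lambda^2$. You instead never touch the determinant: the identity $\ga^++\ga^-=\ga_n^++\ga_n^-=2\ga$ collapses the Hermitian part to $2\la\,{\rm Id}_4+2\gamma n^2\tilde R$ with $\tilde R\succeq 0$ (spectrum $\{0,2,2,4\}$, kernel $\C\,\mathbf 1$), and strict accretivity for $\la>0$ gives injectivity, hence invertibility; I checked the symmetrisation entry by entry against \eqref{m-n} and it is right. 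Your argument is more structural and buys something the paper's does not state: the uniform resolvent bound $\|\mb M_n^{-1}\|\le\la^{-1}$, valid for all $n,\eta,k$, together with the conceptual identification of the degenerate direction $\mathbf 1$ with energy conservation. What it does not buy is the explicit formula for $\det(\mb M_n)$, which the paper needs anyway: the expansion \eqref{032308}--\eqref{051403} and the limit \eqref{eq:1} underlying the asymptotics of $\mb M_n^{-1}$ in Section \ref{ssec:asymp} are all extracted from \eqref{031203}, so in the paper's economy the determinant computation is doing double duty, whereas your argument would have to be supplemented by it later.
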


\begin{proof}
The block entries of the matrix ${\mb M}_n$ defined in \eqref{a-n} satisfy the commutation relation
$$
[A_n,B_n]=A_nB_n-B_nA_n=\begin{bmatrix}
0& -2\gamma^- n^2\; {\rm Re}[a_n-b_n] \vphantom{\Big(}\\
-2\gamma^+ n^2\; {\rm Re}[ b_n-a_n]  &  0\vphantom{\Big(}
\end{bmatrix}=0.
$$ 
Thanks to the well known formula for the determinants of  block matrices with commuting entries we have (see e.g.   \cite[formula (Ib), p. 46]{gant})
\[\text{det}(\mb M_n)= 
\text{det}(A_nB_n-\ga_n^+\ga_n^- n^4\; {\rm Id}_2)=\left|a_nb_n^\star+n^3(\delta\ga_n)\right|^2
-4n^4\ga^+\ga^- ( {\rm Re}[a_n])^2\notag 
\]
and, substituting from \eqref{a-n}, we get  \begin{align} 
\text{det}(\mb M_n) = & \; n^6 \; \Big( (\delta_n s)\; (\sigma_n
s) + (\delta\ga_n)\Big)^2 \notag \\
&
 + \big(\lambda + 2\gamma n^2\big)^2  \; \bigg\{\lambda^2+n^2\big(4\gamma\lambda  +(\delta_n s)^2\big) \notag  \\ & \qquad   +n^4 \Big(
 2 \sin^2(2\pi
    k) +2\sin^2\big(2\pi\big(k+\tfrac{\eta}{n}\big)\big)+ (\sigma_n s)^2\Big)
\bigg\}.\label{031203}
\end{align}
Here  $\delta_n s$, $\sigma_n
s$ are given by  \eqref{012308} and
$$
\delta\ga_n(\eta,k):=n(\ga^+\ga^--\ga^+_n\ga^-_n)=n\Big(\sin^2\big(2\pi\big(k+\tfrac{\eta}{n}\big)\big)-\sin^2(2\pi
k)\Big).
$$
The proposition is a direct conclusion of \eqref{031203}. 
\end{proof}

\bigskip

It is also  clear that
\begin{equation}
\label{032308}
\text{det}(\mb M_n)=n^8\Delta_n,
\end{equation}
 where
\begin{equation}
\Delta_n= \frac 1{n^2} \bigg\{4\ga^2\; \Gamma_n + 4\ga^2\big(4\la \ga+(\delta_ns)^2\big) 
+ \big( (\delta_n s)(\sigma_n s)+
(\delta\ga_n)\big)^2 \bigg\} + 4\gamma\lambda\;\frac{ \Gamma_n}{n^4}  +\frac{C_n}{n^3}.  \label{051403}
\end{equation}
for some $|C_n|\preceq 1$ and
\begin{equation}
\label{Gan}
\Gamma_n(\eta,k):=n^2\Big(2\sin^2(2\pi k) +2\sin^2\big(2\pi
\big(k+\tfrac{\eta}{n}\big)\big)+(\si_n s)^2\Big).
\end{equation}
{On the one hand, note that for $k$ sufficiently \emph{far} from $0$, the dominant term is $(4\gamma^2\; \Gamma_n)/n^2$ and then
$$
\Delta_n \sim 4\gamma^2 \left(4\sin^2(2\pi k) + 16\sin^4(\pi k) \right).
$$
On the other hand, for $k = \frac{\xi}{n}$ and fixed $\xi\in\Z$ we have
\begin{equation}
  \label{eq:1}
  n^2\; \Delta_n\Big(\lambda, \eta, \frac{\xi}{n}\Big) =\frac 1{n^6} \text{det}(\mb M_n)\Big(\lambda, \eta, \frac{\xi}{n}\Big) \sim 16 \gamma^2 \Big[\lambda\gamma 
    + 2\pi^2 \left(\xi^2 + (\eta+\xi)^2\right)\Big].
\end{equation}}
Since the block entries of $\mb M_n$ commute we can also write
\begin{equation*}
 \mb M_n^{-1} =
 \begin{bmatrix}
\Big[A _nB_n-(\ga^+_n\ga^-_n n^4){\rm Id}_2\Big]^{-1} &0\\
0&\Big[A _nB_n-(\ga^+_n\ga^-_n n^4){\rm Id}_2\Big]^{-1}
 \end{bmatrix} \begin{bmatrix}
B_n \vphantom{\Big]^{-1}} &\ga^-_n n^2\;{\rm Id}_2\\
\ga^+_n n^2\;{\rm Id}_2&A _n\vphantom{\Big]^{-1}}
 \end{bmatrix}.
 \end{equation*}
Note that
$$
\Big[A _nB_n-(\ga^+_n\ga^-_n n^4){\rm Id}_2\Big]^{-1}=\frac{1}{\mathrm{det}(\mb M_n)}\begin{bmatrix}
a_n^\star b_n+n^3\;(\delta\ga_n) &2\ga^- n^2\;{\rm Re}[
a_n]\\
&\\
2\ga^+ n^2\;{\rm Re}[b_n]&a_n b_n^\star+n^3\;(\delta\ga_n) 
 \end{bmatrix}.
$$
With these formulas we conclude that
\begin{equation}
\label{022308}
\mb M_n^{-1} =\frac{1}{\text{det}(\mb M_n)} \begin{bmatrix}
\displaystyle \vphantom{\int} d_n^+ & \ga^-d_n & \ga_n^-c_n^\star &\gamma^-\gamma^-_n  c_n^0 \\
\displaystyle \vphantom{\int}\ga^+d_n^0 & d_n^- & \gamma^+\gamma^-_nc_{n}^0 & \ga_n^-c_n \\
\displaystyle \vphantom{\int}\ga^+_nc_n^\star & \gamma^-\gamma^+_n c_{n}^0 & (d_{n}^-)^\star & \ga^-(d_n^0)^\star \\
\displaystyle \vphantom{\int}\gamma^+\gamma^+_n c_n^0 &\ga^+_n c_n & \ga^+d_n^\star & (d_n^+)^\star
\end{bmatrix}
\end{equation}
where all the constants are explicit and given by
\begin{equation}
\label{041203}
\left\{
\begin{aligned}
d_n^+ & := \; a_n^\star \big|b_n\big|^2+n^3b_n^\star(\delta\ga_n) - 2\gamma^-\ga^+ n^4\; \text{Re}[a_n], \vphantom{\Big(}
\\
d_n^- & := \; b_n^\star \big|a_n\big|^2 +n^3a_n^\star (\delta\ga_n)- 2\gamma^-\ga^+ n^4\;
\text{Re}[b_n], \vphantom{\Big(}\\
d_{n} & := \; 2 n^2 a_n^\star\; {\rm Re}[a_n] - n^2
a_n^\star b_n-n^5 (\delta\ga_n) ,
\vphantom{\Big(}\\
d_n^0 & := \; 2n^2 b_n^\star\; {\rm Re}[b_n] - n^2
a_n b_n^\star-n^5 (\delta\ga_n) , \vphantom{\Big(}\\
c_n & :=  \; n^{2} a_n b_n^\star+n^5 (\delta\ga_n)\vphantom{\Big(}, \\ 
c_n^0 & :=  \; 2n^{4} \;\text{Re}[a_n].
\end{aligned}\right.
\end{equation}

\subsection{Asymptotics of the coefficients} \label{ssec:asymp}
Substituting from \eqref{a-n} into the respective formulas of \eqref{041203} 
and then identifying  the order of magnitude of
the appearing terms we conclude the following:
\begin{lemma} \label{lem:estimates} 
The following asymptotic equalities hold:
\begin{eqnarray}
&&
\frac{d_n^+}{n^6}=4\ga^3+4\ga \sin^2(2\pi k)+2\ga (\si_n
  s)^2- \frac{i(\delta_n s)}{n}\; \big(4\ga^2+(\si_ns)^2\big)+\frac{C_n}{n^2},\nonumber \vphantom{\Bigg\{}\\
&&
\frac{c_n}{n^6}=4\ga^2-2i\ga (\si_n s)+\sin^2\big(2\pi
    \big(k+\tfrac{\eta}{n}\big)\big)-\sin^2(2\pi
    k)+\frac{(\delta_n s)(\si_n s)}{n}+\frac{C_n}{n^2},\nonumber \vphantom{\Bigg\{} \\
&&
\frac{d_n}{n^6}=4\ga^2-2i\ga (\si_n s)+\sin^2(2\pi
    k)-\sin^2\big(2\pi
    \big(k+\tfrac{\eta}{n}\big)\big)-2i\ga \frac{(\delta_n s)}{n}-\frac{(\delta_n s)(\si_n s)}{n}+\frac{C_n}{n^2},\nonumber\vphantom{\Bigg\{} \\
&&
\frac{c_n^0}{n^6}=4\ga+\frac{C_n}{n^2}, \nonumber\vphantom{\Bigg\{}\\
&&
\frac{d_n^0}{n^6}=4\ga^2-2i\ga (\si_n s) +\sin^2(2\pi
    k)-\sin^2\big(2\pi
    \big(k+\tfrac{\eta}{n}\big)\big) -2i\ga \frac{(\delta_n s)}{n}-\frac{(\delta_n s)(\si_n s)}{n}+\frac{C_n}{n^2},\nonumber\vphantom{\Bigg\{}\\
&&
\frac{d_n^-}{n^6}=4\ga^3+4\ga\sin^2(2\pi k)-4i\ga^2 (\si_n s)+2\ga\Big(\sin^2\big(2\pi
    \big(k+\tfrac{\eta}{n}\big)\big)-\sin^2(2\pi
    k)\Big)+\frac{C_n}{n^2},\nonumber
\end{eqnarray}
with $C_n\preceq 1$.
\end{lemma}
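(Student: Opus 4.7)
The proof is a direct verification: for each of the six expressions in \eqref{041203}, I would substitute the explicit formulas $a_n = A + in(\delta_n s)$ and $b_n = A + in^2(\sigma_n s)$ with $A := \lambda + 2\gamma n^2$ from \eqref{a-n}, expand, and sort the resulting terms by their order in $n$. The $n^6$ terms come from products involving three factors of $A$ (whose leading behavior is $2\gamma n^2$), the $n^5$ terms arise when exactly one factor of the $A$-type is replaced by $in(\delta_n s)$ or $n(\delta\gamma_n)$, and everything else collects into the $C_n/n^2$ remainder.

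To illustrate on $c_n$: writing $a_n b_n^\star = (A + in(\delta_n s))(A - in^2(\sigma_n s))$ and expanding gives
\begin{equation*}
a_n b_n^\star \;=\; A^2 \;-\; iAn^2(\sigma_n s) \;+\; iAn(\delta_n s) \;+\; n^3 (\delta_n s)(\sigma_n s).
\end{equation*}
Multiplying by $n^2$, dividing by $n^6$, adding $n^5(\delta\gamma_n)/n^6 = (\delta\gamma_n)/n$, and using $A/n^2 = 2\gamma + \lambda/n^2$ together with $(\delta\gamma_n)/n = \sin^2(2\pi(k+\eta/n))-\sin^2(2\pi k)$, one obtains the claimed formula, with all remaining lower-order contributions absorbed into $C_n/n^2$. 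The same mechanical procedure applies to the other five entries; for $d_n$ and $d_n^0$ it is convenient to first use the identity $2\mathrm{Re}[a_n] - b_n = A - in^2(\sigma_n s)$ (and analogously for $d_n^0$), which reduces these expressions to a single product of the $a_n,b_n$-type. For $d_n^\pm$, the extra cubic terms $|b_n|^2$ and $|a_n|^2$ are handled via $|a_n|^2 = A^2 + n^2(\delta_n s)^2$ and $|b_n|^2 = A^2 + n^4(\sigma_n s)^2$, producing the quadratic $(\sigma_n s)^2$ contribution visible in $d_n^+/n^6$.

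The only non-computational point is verifying that the remainder indeed satisfies $C_n \preceq 1$ in the sense of the paper's notation. For $\eta$ in a bounded range $\{-M,\ldots,M\}$, the quantities $\sigma_n s$, $\sin(2\pi k)$, $\sin(2\pi(k+\eta/n))$ are uniformly bounded by elementary trigonometric estimates, while the mean value theorem applied to $\sin^2$ gives uniform bounds
\begin{equation*}
|\delta_n s(\eta,k)| \;\lesssim\; |\eta|, \qquad |\delta\gamma_n(\eta,k)| \;\lesssim\; |\eta|,
\end{equation*}
independently of $n$ and $k$. These bounds ensure that the discarded terms are controlled uniformly on the relevant parameter set.

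The main (minor) obstacle is purely organizational: tracking the cross-terms so that the surviving $n^5$ contributions are correctly accounted for and the leading $n^6$ contribution is not accidentally spoiled by a hidden $O(n^5)$ term. Beyond this bookkeeping, the proof requires no new ideas, which is why the paper omits it. I would therefore present it as a tabulated computation rather than a narrative, writing out each of the six expressions in the order $d_n^+$, $c_n$, $d_n$, $c_n^0$, $d_n^0$, $d_n^-$ and pointing to the two shortcuts (the $2\mathrm{Re}[a_n]-b_n$ identity and the $|a_n|^2, |b_n|^2$ expansion) that keep the algebra readable.
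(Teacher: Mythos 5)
Your approach is exactly the paper's: the lemma is stated there without proof, as following from substituting \eqref{a-n} into \eqref{041203} and sorting by powers of $n$, and your outline --- including the shortcuts $2\mathrm{Re}[a_n]-b_n=b_n^\star$, $2\mathrm{Re}[b_n]-a_n=a_n^\star$, $|a_n|^2=A^2+n^2(\delta_ns)^2$, $|b_n|^2=A^2+n^4(\sigma_ns)^2$, and the uniform bounds $|\delta_n s|\lesssim|\eta|$, $|\delta\gamma_n|\lesssim|\eta|$ needed to get $C_n\preceq 1$ --- is a faithful and complete execution of that computation. One caveat: if you push your own expansion of $c_n$ to the end, the cross term $iAn^3(\delta_n s)$ contributes $2i\gamma(\delta_n s)/n$, which is $O(1/n)$ and therefore \emph{not} absorbable into $C_n/n^2$; since the analogous term is retained in the printed formulas for $d_n$ and $d_n^0$, its absence from the printed $c_n$ (and likewise the $O(1/n)$ contributions of $n^3 b_n^\star(\delta\gamma_n)$ to $d_n^+$) appears to be an omission in the lemma as stated rather than a defect of your method, so your assertion that ``one obtains the claimed formula'' should be qualified accordingly.
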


 From the above asymptotics it is clear that 
$
{\bf M}^{-1}_n(\lambda,\eta,k) \to 0$ for a fixed $k\neq 0$.
 In addition,
 ${\bf M}^{-1}_n(\lambda,\eta,\frac{\xi}{n})$ and $n^2 \; {\rm e}^{\rm T}\; {\bf M}^{-1}_n(\lambda,\eta,k)$ 
tend to finite limits that we need to compute explicitly in order to complete the proof.

Using \eqref{022308}, \eqref{032308} and the formulas for the asymptotics of  the entries  of ${\bf M}^{-1}_n$, provided by Lemma \ref{lem:estimates}, we conclude that
\begin{equation}
  \label{eq:macrolim}
  \lim_{n\to\infty} {\bf M}^{-1}_n\Big(\lambda,\eta,\frac{\xi}{n}\Big) = 
\frac{\gamma}{4\lambda\gamma + 8{\pi^2}\left[\xi^2 + (\xi + \eta)^2\right]} \,{\bf 1}\otimes{\bf 1}.
\end{equation}
The above in particular implies that ${\rm e}^{\rm T} \cdot  \mb M_n^{-1} (\la,\eta,\frac{\xi}{n})  {\bf 1}\to0$, for any $\xi\in\Z$.
By the same token we can also compute
 \begin{equation}
\lim_{n\to+\infty}\Big\{ n^2\; {\rm e}^{\rm T} \cdot  \mb M_n^{-1} \Big(\la,\eta,\frac{\xi}{n}\Big)  {\bf 1} \Big\}=
\frac{4 \pi^2\; \xi(\xi+\eta)}{\la\ga^2+2\ga\pi^2\;\left[\xi^2 + (\xi + \eta)^2\right]}, \label{eq:macrolim2}
\end{equation}
and
\begin{equation}
  \label{eq:macrolimpr}
   \lim_{n\to\infty} \Big\{n^2 \; {\rm e}^{\mathrm{T}}\cdot\; {\bf M}^{-1}_n(\lambda,\eta, k)\Big\} = \frac 1{2\gamma} \left[1,0,0,1\right],\quad k\neq 0.
\end{equation}
We prove here only \eqref{eq:macrolim2} and we let the reader verify \eqref{eq:macrolim} and \eqref{eq:macrolimpr} using similar computations.  An explicit calculation gives
 \begin{equation}
n^2 \; {\rm e}^{\rm T} \cdot ( \mb M_n^{-1} \; {\bf 1} )= \frac{n^2\; \Xi_n}{\text{det}(\mb M_n)}, \label{eq:explicit}\end{equation}
where
\begin{align*}
\Xi_n (\la,\eta,k):=&\;2{\rm Re}\big[d_n^+-d_n^-
\big]+\ga^-(d_n-(d_n^0)^\star)+\ga^+(d_n^\star-d_n^0) 
\\ &+4i\sin\big(2\pi
\big(k+\tfrac{\eta}{n}\big)\big){\rm Im}[c_n]+4\sin(2\pi k)\sin\big(2\pi
\big(k+\tfrac{\eta}{n}\big)\big)c_n^0. \vphantom{\bigg\{}
\end{align*}
Substituting from \eqref{041203} yields:
\begin{align}
\Xi_n (\la,\eta,k)
= & (\la+2\ga n^2)\bigg\{2\big[n^4(\si_ns)^2
-n^2(\delta_ns)^2\big]
\notag \\ 
&+4i n^4 \Big(\sin(2\pi k)-\sin\big(2\pi \big(k+\tfrac \eta n\big)\big)\Big)(\si_n s)
\nonumber\\
&+4in^3\Big(\sin\big(2\pi \big(k+\tfrac \eta n\big)\big) +\sin(2\pi k)\Big) (\delta_n s) \notag \\
&+8n^{4}\sin(2\pi k)\sin\big(2\pi \big(k+\tfrac \eta n\big)\big)\bigg\}.\label{012004}
\end{align}
From the above and a direct calculation we obtain \eqref{eq:macrolim2}.

\subsection{Proof of Proposition \ref{lem:mech} }
\label{sec:proof-proposition-mech}

Basically the argument follows
 the same idea as the proof of  Proposition \ref{propo021004}. In fact \eqref{eq:macrolim} implies that 
$\overline{\mathfrak w}_n(\lambda,\eta, k)$ concentrates on {small} $k$-s like $\overline{W}^+_n(\eta,k)$. The main difficulty is to deal with the averaging $[\cdot]_n$. For that purpose, for  $\rho\in \left(0,\frac 12\right)$, define
 \begin{equation}
\label{032004}
\widehat\T_{n,\rho}:=\left\{k\in \widehat\T_n:\,|\sin(\pi k)|\ge n^{-\rho}\right\}
\end{equation}
and its complement
$\widehat\T^c_{n,\rho}:=\widehat{\T}_n\setminus\widehat\T_{n,\rho}$. Recall the left hand side of \eqref{011207}: it can be written as ${\rm I}_n+{\rm II}_n$,
where ${\rm I}_n$ and ${\rm II}_n$ correspond to the summations over
$\widehat\T_{n,\rho}$ and  $\widehat\T_{n,\rho}^c$ respectively.
{First we show that for $G\in \mc P_M$
\begin{equation}
  \label{eq:mech2}
 {\rm I}_n= \sum_{\eta\in\bb Z} \frac 1n \sum_{k\in \widehat{\T}_{n,\rho}} \left({\bf M}_n^{-1}(\lambda,\eta,k) {\bf 1}\right) \overline{W}^+_n(\eta,k)
 ({\mc F} G)^\star(\eta,k) \  \mathop{\longrightarrow}_{n\to\infty}  0.
\end{equation}
In fact we have
\begin{multline*}
    \bigg\| \sum_{\eta\in\bb Z} \frac 1n \sum_{|\xi|\ge n^\rho} \Big({\bf M}_n^{-1}\Big(\lambda,\eta,\frac{\xi}n\Big) {\bf 1}\Big)
 \overline{W}^+_n\Big(\eta,\frac{\xi}n\Big)
 ({\mc F} G)^\star\Big(\eta,\frac{\xi}n\Big)\bigg\|_\infty\\
  \le C\|G\|_0  \sum_{|\eta| \le M} \sum_{|\xi| \ge n^\rho} \Big\|{\bf M}_n^{-1}\Big(\lambda,\eta,\frac{\xi}n\Big) {\bf 1}\Big\|_\infty  
\  \mathop{\longrightarrow}_{n\to\infty}  0,
  \end{multline*}
where the constant $C$ depends only on the initial mechanical energy.
Then by the same argument as the one used in the proof of Proposition \ref{propo021004}, and from \eqref{050704b} and \eqref{eq:macrolim} we have
\begin{multline*}
    \lim_{n\to\infty} \sum_{|\eta|\le M} \frac 1n \sum_{|\xi|< n^\rho}
    \Big({\bf M}_n^{-1}\Big(\lambda,\eta,\frac{\xi}n\Big) {\bf 1}\Big)
    \overline{W}^+_n\Big(\eta,\frac{\xi}n\Big) ({\mc F} G)^\star\Big(\eta,\frac{\xi}n\Big)\\
   = \sum_{\eta,\xi \in \bb Z} \frac{\gamma W(r_0\; ;\; \eta,\xi)}{\lambda \gamma + 2 \pi^2\left[\xi^2 + (\xi + \eta)^2\right]} 
    ({\mc F} G)^\star(\eta,0) \; {\bf 1}.
  \end{multline*}}
  This concludes the proof of \eqref{011207}.
Concerning the proof of \eqref{eq:nonlin1}, recall that \[\overline{\cal I}_n = \Big[{\rm e} \cdot (\mb M_n^{-1} \; {\bf 1})\;  \overline{W}^+_n\Big]_n,\] and that we have already computed the limit \eqref{eq:macrolim2}.
Consequently, the result will follow if we are able to show that the contribution to the $k$-averaging from the higher frequencies is negligible. 
The quantity $n^2\; \overline{\mc I}_n$ can be written as ${\rm I}_n+{\rm II}_n$,
where ${\rm I}_n$ and ${\rm II}_n$ correspond to the summations over
$\widehat\T_{n,\rho}$ and  $\widehat\T_{n,\rho}^c$ respectively.
Using the explicit computations \eqref{eq:explicit}, \eqref{012004} and \eqref{051403} we can write
\begin{align*}
n^2\; {\rm e} \cdot  (\mb M_n^{-1}\; {\bf 1}) - {\ga}^{-1}= &\;  \ga^{-1} \left\{2\Big(\sin(2\pi k)-\sin\big(2\pi \big(k+\tfrac \eta n\big)\big)\Big)^2+\frac{C_n}{n}\right\}\\
& \times 
\left\{\vphantom{\int_0^1}2 \sin^2(2\pi
    k) +2\sin^2\big(2\pi \big(k+\tfrac \eta n\big)\big)+(\sigma_n s)^2 +\frac{C_n'}{n^2}\right\}^{-1}.
\end{align*}
It is clear from the above equality that
\begin{equation}
\label{012004a}
\lim_{n\to+\infty}\sup_{|\eta|\le M}\sup_{k\in\widehat\T_{n,\rho}}\left|n^2 \; {\rm e} \cdot  (\mb M_n^{-1} \;{\bf 1}) (\la,\eta,k) - \frac{1}{\ga}\right|=0.
\end{equation}
Thanks to \eqref{012004a} we conclude that
$\lim_{n\to+\infty}({\rm I}_n-{\rm I}_n')=0$, where
\begin{equation}
\label{032104}
{\rm I}_n':=\frac{1}{\ga n}\sum_{k\in \widehat\T_{n,\rho}}\overline{ W}_n^+(\eta,k).
\end{equation}
After a straightforward calculation using the definition of
$\overline{ W}_n^+(\eta,k)$ (see \eqref{eq:Wbar} and \eqref{eq:equality})  we conclude that
\eqref{032104} equals
$$
\frac{1}{2\ga }\sum_{\xi,\xi'\in\Z}\sum_{k\in
  \widehat \T_{n,\rho}}1_{\Z}\Big(k-\frac{\xi'}{n}\Big) 1_{\Z}\Big(-k+
  \frac{\xi-\eta}{n}\Big) (\mc F r_0)(\xi)  (\mc F  r_0)^\star(\xi'),
$$
where $1_{\Z}$ is the indicator function of the integer lattice. Due
to the assumed separation of $k$ from $0$, see \eqref{032004}, and the
decay of the Fourier coefficients of $r_0(\cdot)$ (that belongs to ${\cal C}^\infty(\T)$) we
conclude from the above that $\lim_{n\to+\infty}{\rm I}_n'=0$, thus also $\lim_{n\to+\infty}{\rm I}_n=0$.

Moreover, a similar calculation also yields
\begin{equation*}
{\rm II}_n
=\frac{1}{2}\sum_{\xi\in \mc N_{\rho,n}}n^2\; {\rm e} \cdot ( \mb M_n^{-1} \; {\bf 1} )\Big(\la,\eta,\frac{\xi}{n}\Big) (\mc F r_0)(\xi+\eta)  (\mc F r_0)^\star(\xi),
\end{equation*}
where
$$
\mc N_{\rho,n}:=\Big\{\xi\in\Z\; :\; |\xi|\le \tfrac n 2,\quad\big|\sin\big(
    \tfrac{\pi  \xi}{n}\big)\big|\le n^{-\rho}\Big\}.
$$
Using the dominated convergence theorem we conclude from \eqref{eq:macrolim2} that
\begin{equation}
\label{043104}
\lim_{n\to+\infty}{\rm II}_n
=\sum_{\xi\in\Z}\frac{4\pi^2 \xi(\xi+\eta)\; W(r_0\; ; \; \eta,\xi)}{\la\ga^2+2\ga\pi^2\;[\xi^2+(\xi+\eta)^2]} .
\end{equation}

\subsection{Proof of Lemma \ref{lemma2} and Proposition \ref{lemma3} }
\label{sec:proof-lemma2}

Since \eqref{eq:1st} is a direct consequence of \eqref{eq:2nd}, we prove directly \eqref{eq:2nd}, that is a 
consequence of the following lemma. 

\begin{lemma} \label{lem:estimateS} 
The following asymptotic equality holds:
\begin{equation}
\label{041403}
{\cal S}_n(\la,\eta) :=n^2\;\big(1-\gamma n^2\; \mc M_n(\lambda,\eta)\big)
=\frac{1}{2\ga}\Big(\la+\frac{\eta^2\pi^2}{\ga}\Big)+\frac{C_n}{n},
\end{equation}
where $|C_n|\preceq 1$.
 \end{lemma}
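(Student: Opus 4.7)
The strategy is to view $\mathcal S_n(\lambda,\eta)$ as the Riemann sum $\frac{1}{n}\sum_{k\in\widehat{\T}_n} n^2\bigl(1 - \gamma n^2\,\mathrm e\cdot(\mathbf M_n^{-1}\mathrm e)(\lambda,\eta,k)\bigr)$, extract its limit by pushing the adjugate-based computation of Section~\ref{ssec:asymp} one order further in $1/n$, and pin the leading constants by an explicit diagonalisation at $k=\eta=0$.

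Using \eqref{022308} and \eqref{041203}, I would first derive the closed-form rational expression
$$\mathrm{e}\cdot(\mathbf M_n^{-1}\mathrm e)(\lambda,\eta,k) = \frac{\widetilde\Xi_n(\lambda,\eta,k)}{\det\mathbf M_n(\lambda,\eta,k)},$$
where $\widetilde\Xi_n$ is the sign-weighted (by $\mathrm e\otimes\mathrm e$) combination of the adjugate entries, reduced by the identities $\gamma^\pm + \gamma^\mp = 2\gamma$ and $\gamma^+\gamma^- = \gamma^2 - \sin^2(2\pi k)$ to an explicit polynomial analogous to the $\Xi_n$ of \eqref{012004}. Combining with the determinant expansion \eqref{051403} and the entry asymptotics of Lemma~\ref{lem:estimates}, a direct expansion on the bulk set $\widehat{\T}_{n,\rho}$ of \eqref{032004} produces
$$n^2\bigl(1 - \gamma n^2\,\mathrm e\cdot(\mathbf M_n^{-1}\mathrm e)\bigr)(\lambda,\eta,k) = \frac{\lambda}{2\gamma} + \frac{(\delta_n s)^2(\eta,k)}{4\gamma^2} + \frac{C_n(\lambda,\eta,k)}{n}, \quad |C_n|\preceq 1.$$
The constants are pinned both by the algebra and by the explicit check at $(k,\eta)=(0,0)$: there $\mathbf M_n$ has $\mathrm e$ as an eigenvector of eigenvalue $\lambda+4\gamma n^2$, so $n^2(1-\gamma n^2\,\mathrm e\cdot(\mathbf M_n^{-1}\mathrm e)) = \lambda n^2/(\lambda + 4\gamma n^2) \to \lambda/(4\gamma)$, which is compatible with the bulk value at $(0,0)$ up to a single-point discrepancy of weight $1/n$ in the $k$-average.

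Averaging in $k$, the relation $(\delta_n s)(\eta,k) = 2\pi\eta\sin(2\pi k) + O(\eta^2/n)$ together with the exact identity $\frac{1}{n}\sum_{k\in\widehat{\T}_n}\sin^2(2\pi k) = \tfrac12$ (valid for $n>2$) delivers
$$\mathcal S_n(\lambda,\eta) = \frac{\lambda}{2\gamma} + \frac{4\pi^2\eta^2}{4\gamma^2}\cdot\frac{1}{2} + \frac{C_n'}{n} = \frac{1}{2\gamma}\Bigl(\lambda + \frac{\pi^2\eta^2}{\gamma}\Bigr) + \frac{C_n'}{n}.$$
The contribution of the small-$k$ region $\widehat{\T}_{n,\rho}^c$ must be shown negligible at order $1/n$: here \eqref{eq:macrolim} combined with the identity $\mathrm e\cdot(\mathbf 1\otimes\mathbf 1)\mathrm e = 0$ forces the leading behaviour of $\mathrm e\cdot\mathbf M_n^{-1}(\lambda,\eta,\xi/n)\mathrm e$ to vanish, and a direct per-term estimate along the lines of the proof of Proposition~\ref{lem:mech} absorbs the residual boundary contribution into $C_n'/n$.

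The main technical obstacle is the identification of the $(\delta_n s)^2$ coefficient in the bulk expansion: the numerator $\widetilde\Xi_n$ is of formal order $n^6$ and the denominator $\det\mathbf M_n$ of formal order $n^8$, and the ratio reaches its true size $n^{-2}$ only after the cancellations driven by $\gamma^+\gamma^- = \gamma^2 - \sin^2(2\pi k)$ and its $\gamma_n^\pm$-analogue, so pinning the subleading coefficient requires tracking one further order past those cancellations. A useful sanity check is that $\frac{1}{2\gamma}(\lambda + \pi^2\eta^2/\gamma)$ is exactly $\frac{1}{2\gamma}$ times the Fourier--Laplace symbol of $\lambda - \frac{1}{4\gamma}\partial_{uu}^2$, which by Corollary~\ref{cor:conv} together with \eqref{eq:energy-evol} is precisely the operator forced to appear as the denominator of the thermal Laplace--Wigner function $\mathcal W_{\mathrm{thm}}^+(\lambda,\eta)$.
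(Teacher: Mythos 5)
Your overall route --- writing $\mathrm e\cdot(\mathbf M_n^{-1}\mathrm e)$ as an explicit ratio of the sign-weighted adjugate combination over $\det\mathbf M_n$, expanding on the bulk of $\widehat\T_n$, and then averaging in $k$ --- is the same as the paper's. But the central formula you assert for the bulk expansion is wrong, and this is exactly the point you yourself flag as the ``main technical obstacle''. Carrying the expansion out, the $\eta$-dependent contribution to $n^2\bigl(1-\gamma n^2\,\mathrm e\cdot\mathbf M_n^{-1}\mathrm e\bigr)$ is not $(\delta_n s)^2/(4\gamma^2)$ but
\[
\frac{1}{4\gamma^2}\;\frac{n^2\bigl((\sigma_n s)(\delta_n s)+\delta\gamma_n\bigr)^2}{\Gamma_n+4\lambda\gamma+\cdots},
\]
with $\Gamma_n$ as in \eqref{Gan}. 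The term $(\delta_n s)^2$ does appear in $\det\mathbf M_n-\gamma n^2\,\Theta_n$, but without the extra factor $n^2$, so it is annihilated by the division by $\Gamma_n\sim n^2$; what survives is the cross term $\bigl((\sigma_n s)(\delta_n s)+\delta\gamma_n\bigr)^2$, which carries the extra $n^2$ visible in \eqref{051403}. Its pointwise limit for fixed $k\neq 0$ is, after the simplifications $4\sin^2(\pi k)\sin(2\pi k)+\sin(4\pi k)=2\sin(2\pi k)$ and $4\sin^2(2\pi k)+16\sin^4(\pi k)=16\sin^2(\pi k)$, equal to $(2\pi\eta)^2\cos^2(\pi k)$ --- not the $(2\pi\eta)^2\sin^2(2\pi k)$ your formula gives. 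These differ pointwise (at $k=1/4$ they equal $1/2$ and $1$ respectively); your final answer comes out right only because both functions happen to average to $1/2$ over the torus. So as written the proof rests on a false identity at its key step: the ``direct expansion'' you invoke would not produce the formula you wrote down.

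Two secondary points. The splitting into $\widehat\T_{n,\rho}$ and its complement is unnecessary here and actually costs you the stated rate: the complement has $O(n^{1-\rho})$ points, so even a uniform $O(1)$ bound per point gives only an $O(n^{-\rho})$ error, not the $O(1/n)$ claimed in \eqref{041403}. The paper avoids the splitting entirely, because the $4\lambda\gamma$ in the denominator keeps the two integrands uniformly bounded on all of $\widehat\T_n$, so one passes to the limit in the full Riemann sum at once. Also, the eigenvector check at $k=\eta=0$ is correct but yields $\lambda/(4\gamma)$ rather than the bulk value $\lambda/(2\gamma)$, so it cannot be used to pin the constant; it only confirms that the exceptional point is harmless in the average.
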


\begin{proof}
After a direct calculation, we obtain
\[\mc S_n(\lambda,\eta) 
=\left[\frac{n^2}{\text{det}(\mb M_n)} \;
\big({\text{det}(\mb M_n)}-\ga n^2\Theta_n\big)\right]_n, \]
where 
\[\Theta_n :=2{\rm Re}\Big[d_n^++d_n^-+2\ga^2 c_n^0-2\ga  c_n \Big]-\ga^-\big(d_n+(d_n^0)^\star\big)-\ga^+\big(d_n^\star+d_n^0\big).\]
Therefore, from \eqref{041203} and Lemma \ref{lem:estimates}, we have
\[
 {\cal S}_n(\la,\eta) =\frac{\la}{2\ga n}\sum_{k\in\widehat{\T}_n}{\rm I}_n(\eta,k)
 + \frac{1}{4\ga^2 n}\sum_{k\in\widehat{\T}_n}{\rm II}_n(\eta,k).
\]
where
\begin{eqnarray*}
&&
{\rm I}_n:=\frac{\Gamma_n
  +2\la\ga+(\delta_n s)^2+\frac{C_n}n}{\Gamma_n
  +4\la\ga+(\delta_n s)^2+\ga^{-2}\big((\delta_n
  s)(\si_ns)+(\delta\ga_n)\big)^2+\frac{C_n'}n},\\
&&
{\rm II}_n:=
\frac{n^2\; \big((\si_n s)
   (\delta_n s)+(\delta\ga_n)\big)^2}{\Gamma_n
  +4\la\ga+(\delta_n s)^2+\ga^{-2}\big((\delta_n
  s)(\si_ns)+(\delta\ga_n)\big)^2+\frac{C_n'}n}.
\end{eqnarray*}
The expressions $C_n,C_n'$ satisfy $|C_n|+|C_n'
|\preceq 1$.
Directly from the definition of ${\rm I}_n$ and ${\rm II}_n$ we conclude that $|{\rm I}_n|+|{\rm II}_n|\preceq 1$ and
\begin{eqnarray*}
&&\lim_{n\to+\infty}\frac{1}{n}\sum_{k\in\widehat{\T}_n}{\rm I}_n=1,\\
&&
\lim_{n\to+\infty}\frac{1}{n}\sum_{k\in\widehat{\T}_n}{\rm II}_n=(2\pi\eta)^2\int_{\T}\frac{\big[4\sin(2\pi v)\sin^2(\pi
  v)+\sin(4\pi v)\big]^2}{4\sin^2(2\pi
  v)+16\sin^4(\pi v)}\dd v.
\end{eqnarray*}
Using trigonometric identities \[ 2\sin^2(\pi v)=1-\cos(2\pi v) \quad \text{ and } \quad \sin(4\pi v)=2\sin(2\pi v)\cos(2\pi v)\] we
conclude
that the last integral equals
$$
\int_{\T}\frac{\big[  2\sin(2\pi v)(1-\cos(2\pi v))+\sin(4\pi v)\big]^2}{4\sin^2(2\pi
  v)+4[1-\cos(2\pi v)]^2}\dd v
=\int_{\T}\frac{\sin^2(2\pi v)}{2(1-\cos(2\pi
  v))}\dd v=\frac{1}{2}.
$$
Thus, we obtain
$$
{\cal S}_n(\lambda,\eta)  =\frac{1}{2\ga}\Big(\la+\frac{\pi^2\eta^2}{\ga }\Big)+\frac{C_n}{n},
$$
with
 $|C_n|\preceq 1$.
\end{proof} 

It remains to prove \eqref{eq:4th}.
This would be a direct consequence of \eqref{eq:macrolimpr}, 
 but we need some care in exchanging the limit with the $[\cdot]_n$ averaging.

Choose $\rho\in(0,1)$, then we can decompose
\begin{equation*}
    n^2 \Big[ {\rm e}\cdot \big({\bf M}^{-1}_n\; \widetilde{\mf v}^0_n\big) \Big]_n = 
    \frac {1}{2\gamma} \left(\Big[\widetilde{W}^+_n\Big]_n + \Big[\widetilde{W}^-_n\Big]_n\right)
    + K^{(1)}_n +  K^{(2)}_n
\end{equation*}
where
\begin{equation*}
   K^{(1)}_n (\lambda, \eta) = \frac 1n \sum_{k\in\widehat{\T}_{n,\rho}} 
   \Big(n^2{\rm e}^{\rm T}\; {\bf M}^{-1}_n(\lambda,\eta,k) - \frac 1{2\gamma} \; {\rm u} \Big) \cdot
   \widetilde{\mf v}^0_n (\eta,k)
\end{equation*}
with ${\rm u} = [1,0,0,1]^{\rm T}$, and the definition of $ K^{(2)}_n$ differs from  $K^{(1)}_n$ only in that the range of the summation
in $k$ extends over $\widehat{\T}_{n,\rho}^c$.

From \eqref{eq:convergence_tilde} we have
\begin{equation*}
  \lim_{n\to\infty} \left[\widetilde{W}^+_n(\eta,\cdot)\right]_n = \lim_{n\to\infty} \left[\widetilde{W}^-_n(\eta,\cdot)\right]_n = 
  \big({\mc F} e_{\rm thm}(0,\cdot) \big) (\eta), 
\end{equation*}
and therefore we only have to prove that $K^{(1)}_n$ and $K^{(2)}_n$ vanish as $n\to\infty$. 
Concerning $K^{(1)}_n$ we write
\begin{align}
    \left| K^{(1)}_n(\lambda,\eta) \right|  & \le \sup_{k \in \widehat{\T}_{n,\rho} } 
    \left\|n^2 {\rm e}^{\rm T} \;{\bf  M}^{-1}_n(\lambda,\eta,k) - \frac 1{2\gamma} \; {\rm u} \right\|_\infty
    \ \Big\| \left[ \widetilde{\mf v}^0_n (\eta,\cdot)\right]_n
    \Big\|_\infty \notag\\
  &  \le C \sup_{k \in  \widehat{\T}_{n,\rho} } \left\| n^2{\rm e}^{\rm T}\; {\bf M}^{-1}_n(\lambda,\eta,k) - \frac 1{2\gamma} \; {\rm u} \right\|_\infty \label{eq:est1}  , 
  \end{align}
where $C$ depends on the bound on the initial energy. By  direct estimation, using the information on the asymptotic behavior for 
the coefficients of  ${\bf M}^{-1}_n$, provided by \eqref{eq:macrolimpr},  we conclude  that the right hand side of \eqref{eq:est1} converges to $0$ as $n\to\infty$, for 
any given $\lambda$ and $\eta$. 

Concerning $K^{(2)}_n$, since $n^2\; {\rm e}^{\rm T}\cdot\;{\bf M}^{-1}_n(\lambda, \eta, k)$ are uniformly bounded in $k$, for any integer 
 $M$ there exists a constant $C_M>0$ such that, for all $n$, $\lambda >\lambda_M, |\eta| \le M$,
\begin{equation*}
    \left| K^{(2)}_n(\lambda,\eta) \right| \le \frac{C_M}{n} \sum_{k\in\widehat{\T}^c_{n,\rho}} \left|  \widetilde{\mf v}^0_n (\eta,k) \right|.
\end{equation*}
Using the Cauchy-Schwarz inequality we have
\begin{equation*}
  \begin{split}
    \left| K^{(2)}_n(\lambda,\eta) \right| &  \le \frac{C'_M}{n}\;
    \big|\widehat{\T}^c_{n,\rho}\big|^{\frac 12}\; \Bigg(\sum_{k\in
        \widehat{\T}_{n}} \mathbb E \left|  \widetilde{\mf v}^0_n (\eta,k) \right|^2
    \Bigg)^{\frac12}\\
  &  \le {C'_M} \;\bigg( \frac{ \big|\widehat{\T}^c_{n,\rho}\big|}{n}\bigg)^{\frac12} \; w_*^{\frac12} \mathop{\longrightarrow}_{n\to\infty} 0,
  \end{split}
\end{equation*}
where $w_*$ is given by \eqref{071407}. This concludes the proof of Proposition \ref{lemma2}.

 Proposition \ref{lemma3} is also a direct consequence of  \eqref{eq:macrolimpr}: {instead of computing the limit of 
 \[ \widetilde{\mc I}_n(\lambda,\eta) = \Big[\big(\widetilde{w}_n^+ - \widetilde{y}_n^+ - \widetilde{y}_n^- + \widetilde{w}_n^-\big)(\lambda,\eta,\cdot)\Big]_n\] we can compute the limit of 
 \[ 2 \; \big[ \widetilde{w}_n^+(\lambda,\eta,\cdot)\big]_n\] by using very similar arguments.}

\section*{Acknowledgments}
T. K. acknowledges the support of the
  Polish National Science Center grant DEC-2012/07/B/ST1/03320.
  
The work of M.S. was  supported by the 
ANR-14-CE25-0011 project (EDNHS) of the French National Research Agency (ANR), and  by the Labex CEMPI (ANR-11-LABX-0007-01), and  finally by  CAPES (Brazil) and IMPA (Instituto de Matematica Pura e Aplicada, Rio de Janeiro) through a post-doctoral fellowship.

S.O. has been partially supported by the ANR-15-CE40-0020-01 grant LSD.

\bibliographystyle{amsplain}

\end{document}